\newenvironment{proof}{{\noindent \it Proof.}}{\hfill $\blacksquare$\par}
\newtheorem{theorem}{Theorem}[section]
\newtheorem{lemma}[theorem]{Lemma}
\newtheorem{corollary}[theorem]{\rm\bfseries Corollary}
\newtheorem{problem}{Problem}[section]
\begin{document}

\title{Spectral Properties of \(p\)-Sombor Matrices and Beyond}
\author{Hechao Liu$^{1}$, Lihua You$^{1,}$\thanks{Corresponding author}, Yufei Huang$^{2}$, Xiaona Fang$^{1}$
 \\
{\small $^1$School of Mathematical Sciences, South China Normal University,}\\ {\small Guangzhou, 510631, P. R. China}\\
 \small {\tt hechaoliu@m.scnu.edu.cn},\quad  \small {\tt ylhua@scnu.edu.cn},\quad  \small {\tt xiaonafang@m.scnu.edu.cn}\\
{\small $^2$Department of Mathematics Teaching, Guangzhou Civil Aviation College,}\\ {\small Guangzhou, 510403, P. R. China}\\
\small {\tt fayger@qq.com}\\
}
\date{}
\maketitle
\begin{abstract}
Let $G=(V(G),E(G))$ be a simple graph with vertex set $V(G)=\{v_{1},v_{2},\cdots, v_{n}\}$ and edge set $E(G)$. The $p$-Sombor matrix $\mathcal{S}_{p}(G)$ of $G$ is the square matrix of order $n$ whose $(i,j)$-entry is equal to $((d_{i})^{p}+(d_{j})^{p})^{\frac{1}{p}}$ if $v_{i}\sim v_{j}$, and 0 otherwise, where $d_{i}$ denotes the degree of vertex $v_{i}$ in $G$. In this paper, we study the relationship between $p$-Sombor index $SO_{p}(G)$ and $p$-Sombor matrix $\mathcal{S}_{p}(G)$ by the $k$-th spectral moment $N_{k}$ and the spectral radius of $\mathcal{S}_{p}(G)$. Then we obtain some bounds of $p$-Sombor Laplacian eigenvalues, $p$-Sombor spectral radius, $p$-Sombor spectral spread, $p$-Sombor energy and $p$-Sombor Estrada index. We also investigate the Nordhaus-Gaddum-type results for $p$-Sombor spectral radius and energy. At last, we give the regression model for boiling point and some other invariants.
\end{abstract}
\baselineskip=0.30in
\maketitle

\makeatletter
\renewcommand\@makefnmark%
{\mbox{\textsuperscript{\normalfont\@thefnmark)}}}
\makeatother
\section{Introduction}
\hskip 0.6cm
Let $G=(V(G),E(G))$ be a simple graph with vertex set $V(G)=\{v_{1},v_{2},\cdots, v_{n}\}$ and edge set $E(G)$. Let $d_{i}$ be the degree of vertex $v_{i}\in V(G)$, $i\sim j$ denote $v_{i}v_{j}\in E(G)$.
In this paper, for all notations and terminologies used, but not defined here, we refer to the textbook \cite{jaus2008}.

Based on elementary geometry, a novel vertex-degree-based topological index, the Sombor index was introduced by Gutman in chemical graph theory,
which is defined as \cite{gumn2021}
$$SO(G)=\sum_{v_{i}v_{j}\in E(G)}\sqrt{d_{i}^{2}+d_{j}^{2}}\triangleq \sum\limits_{\substack{i\sim j\\ 1\leq i,j\leq n}}\sqrt{d_{i}^{2}+d_{j}^{2}}.$$
See \cite{aiva2021,rirm2021,chli2021,ctra2021,ctsi2021,dxsa2021,dash2021,dagh2021,doai2021,trdo2021,raro2021,dengt2021,fili2021,fyli2021,ghal2021,gumn2021,guma2021,
hoxu2021,milo2021,zlin2021,lich2021,lyhu2021,tliu2021,wmlf2021,redz2021} for more details.

This form of the Sombor index corresponds to a 2-norm, and it is a natural idea to define a more general form, i.e., $p$-norm.
With this in mind, R\'{e}ti, Do\v{s}li\'{c} and Ali\cite{trdo2021}, proposed the $p$-Sombor index $(p\neq 0)$, which was defined as
$$SO_{p}(G)=\sum_{v_{i}v_{j}\in E(G)}((d_{i})^{p}+(d_{j})^{p})^{\frac{1}{p}}\triangleq \sum\limits_{\substack{i\sim j\\ 1\leq i,j\leq n}}\left((d_{i})^{p}+(d_{j})^{p}\right)^{\frac{1}{p}}.$$
It obvious that $SO_{1}$ is the first Zagreb index \cite{fugu2015}, $SO_{2}$ is the Sombor index, $SO_{-1}$ is the inverse sum indeg index (ISI index)\cite{vuga2010}.

As we know, the adjacent matrix $A(G)=[a_{ij}]_{n\times n}$ is defined as
\begin{equation*}
a_{ij}=
       \left\{
        \begin{array}{lc}
    1,\quad  \ v_{i}v_{j}\in E(G);\\
    0,\quad \ \  otherwise.
           \end{array}
        \right.
	\end{equation*}
The adjacent eigenvalues with non-increasing order are $\mu_{1}\geq \mu_{2}\geq \cdots \geq \mu_{n}$.

The concept of graph energy was firstly introduced by Gutman \cite{gumn1978}, which is defined as
$$\mathcal{E}(G)=\sum_{i=1}^{n}|\mu_{i}|.$$
Due to the importance of the concept of energy, it is widely studied in computational chemistry.
We can refer to \cite{bogn2010,kcds2019}
for more details about energy.

Let $G$ be a simple graph. We define the $p$-Sombor matrix as $\mathcal{S}_{p}=\mathcal{S}_{p}(G)=[s_{ij}^{p}]_{n\times n}$ $(p\neq 0)$, where
\begin{equation*}
s_{ij}^{p}=
       \left\{
        \begin{array}{lc}
    ((d_{i})^{p}+(d_{j})^{p})^{\frac{1}{p}},\quad  \ v_{i}v_{j}\in E(G);\\
    0,\quad \ \ \ \ \ \ \ \ \ \ \ \ \ \ \ \ \ \ \ \ otherwise.
           \end{array}
        \right.
	\end{equation*}
It is obvious that $SO_{p}(G)=\frac{1}{2}\sum\limits_{i=1}^{n}\sum\limits_{j=1}^{n}s_{ij}^{p}$.

Let $\xi_{1}\geq \xi_{2}\geq \cdots \geq \xi_{n}$ be the eigenvalues of $\mathcal{S}_{p}$. We call $\xi_{1}\geq \xi_{2}\geq \cdots \geq \xi_{n}$ the $p$-Sombor eigenvalues of $G$, $\xi_{1}$ the $p$-Sombor spectral radius of $G$ and $\xi_{1}-\xi_{n}$ the spectral spread of $\mathcal{S}_{p}$.

The $p$-Sombor energy $S_{p}E$ and the $p$-Sombor Estrada index $S_{p}EE$ of $G$ are defined as
$$S_{p}E=S_{p}E(G)=\sum\limits_{i=1}^{n}|\xi_{i}|,\quad  S_{p}EE=S_{p}EE(G)=\sum\limits_{i=1}^{n}e^{\xi_{i}},$$
respectively, and the $p$-Sombor Laplacian matrix of $G$ is defined as
$$\mathcal{L}_{p}(G)=\mathcal{D}_{p}(G)-\mathcal{S}_{p}(G),$$
where $\mathcal{D}_{p}(G)=Diag(\sum\limits_{k=1}^{n}s_{1k}^{p}, \sum\limits_{k=1}^{n}s_{2k}^{p}, \cdots, \sum\limits_{k=1}^{n}s_{nk}^{p})$.

It is worth noting that the $p$-Sombor matrix (weighted adjacency matrices) behaves quite different from the traditional adjacency matrix, for example, the $p$-Sombor spectral radius has no monotonicity \cite{liwn2021}. This is the reason why we propose this class of matrices.

The rest of the paper is organized as follows. In Section $2$, we study the relationship between $p$-Sombor index $SO_{p}(G)$ and $p$-Sombor matrix $\mathcal{S}_{p}(G)$ by the $k$-th spectral moment $N_{k}$ and the spectral radius of $\mathcal{S}_{p}(G)$. In Section $3$, we obtain some bounds of $p$-Sombor Laplacian eigenvalues. In Section $4$, we consider some bounds of $p$-Sombor spectral radius and $p$-Sombor spectral spread. In Section $5$, we investigate some bounds of $p$-Sombor energy and $p$-Sombor Estrada index. In Section $6$, we study the Nordhaus-Gaddum-type results for $p$-Sombor spectral radius and energy. In Section $7$, we give the regression model for boiling point and some other invariants. In Section $8$, we conclude this paper and propose some open problems.

\section{Relationship between the $p$-Sombor matrix $\mathcal{S}_{p}(G)$ and the $p$-Sombor index $SO_{p}(G)$}

\hskip 0.6cm
Let $G$ be a simple graph, $\mathcal{S}_{p}=\mathcal{S}_{p}(G)$, $N_{k}$ be the $k$-th spectral moment of $\mathcal{S}_{p}$, i.e., $N_{k}=\sum\limits_{i=1}^{n}(\xi_{i})^{k}$.
It is obvious that $N_{k}=tr((\mathcal{S}_{p})^{k})$, thus $S_{p}EE(G)=\sum\limits_{k=0}^{\infty}\frac{N_{k}}{k!}$.

\begin{lemma}\label{l2-1}
Let $G$ be a graph with $|V(G)|=n$ and $\mathcal{S}_{p}=\mathcal{S}_{p}(G)$ be the $p$-$Sombor$ $matrix$ of $G$. Then

$(1)$ $N_{0}=n$;

$(2)$ $N_{1}=0$;

$(3)$ $N_{2}=2\sum\limits_{\substack{i\sim j\\ 1\leq i,j\leq n}}\left((d_{i})^{p}+(d_{j})^{p}\right)^{\frac{2}{p}}$;

$(4)$ $N_{3}=2\sum\limits_{\substack{i\sim j\\ 1\leq i,j\leq n}}\left(\left((d_{i})^{p}+(d_{j})^{p}\right)^{\frac{1}{p}}\sum\limits_{\substack{j\sim k, k\sim i \\ 1\leq k\leq n}}\left((d_{i})^{p}+(d_{k})^{p}\right)^{\frac{1}{p}} \left((d_{k})^{p}+(d_{j})^{p}\right)^{\frac{1}{p}}\right)$;

$(5)$ $N_{4}=\sum\limits_{i=1}^{n}\left(\sum\limits_{\substack{i\sim j \\ 1\leq j\leq n}}\left((d_{i})^{p}+(d_{j})^{p}\right)^{\frac{2}{p}}\right)^{2}+  \sum\limits_{i\neq j}\left(\sum\limits_{\substack{k\sim i, k\sim j \\ 1\leq k\leq n}}\left((d_{i})^{p}+(d_{k})^{p}\right)^{\frac{1}{p}}\left((d_{j})^{p}+(d_{k})^{p}\right)^{\frac{1}{p}}\right)^{2}$;

\end{lemma}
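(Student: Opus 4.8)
The plan is to use the identity $N_k=\mathrm{tr}((\mathcal{S}_p)^k)$ recorded just before the statement, and to evaluate each trace by expanding it as a weighted count of closed walks of length $k$ in $G$, the weight of a walk being the product of the entries $s_{ij}^p$ along its edges. Throughout I use that $\mathcal{S}_p$ is symmetric, that $s_{ij}^p=0$ unless $v_i\sim v_j$, and that the diagonal entries $s_{ii}^p$ all vanish.

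Parts $(1)$ and $(2)$ are immediate: $(\mathcal{S}_p)^0=I_n$ gives $N_0=\mathrm{tr}(I_n)=n$, and $N_1=\mathrm{tr}(\mathcal{S}_p)=\sum_{i=1}^n s_{ii}^p=0$. For part $(3)$ I would write $N_2=\mathrm{tr}((\mathcal{S}_p)^2)=\sum_{i=1}^n\sum_{j=1}^n s_{ij}^p s_{ji}^p=\sum_{i,j}(s_{ij}^p)^2$; only adjacent pairs contribute, and since each edge $v_iv_j$ appears as the two ordered pairs $(i,j)$ and $(j,i)$, this yields the factor $2$ and the claimed expression. For part $(4)$ I would factor the cube as $N_3=\mathrm{tr}(\mathcal{S}_p\cdot(\mathcal{S}_p)^2)=\sum_{i,j}s_{ij}^p\,((\mathcal{S}_p)^2)_{ij}$. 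A term survives only when $v_i\sim v_j$, and for such a pair $((\mathcal{S}_p)^2)_{ij}=\sum_{k\sim i,\,k\sim j}s_{ik}^p s_{kj}^p$ ranges over the common neighbours $k$ of $i$ and $j$; again each edge supplies two ordered pairs, producing the factor $2$ and the stated triple sum.

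The real work is part $(5)$, and the cleanest route is to avoid expanding the fourth power directly over all $4$-tuples of indices. Instead I would use $N_4=\mathrm{tr}((\mathcal{S}_p)^4)=\mathrm{tr}(((\mathcal{S}_p)^2)^2)=\sum_{i=1}^n\sum_{j=1}^n\big(((\mathcal{S}_p)^2)_{ij}\big)^2$, again by symmetry of $(\mathcal{S}_p)^2$, and then split the double sum into its diagonal part $i=j$ and its off-diagonal part $i\neq j$. For the diagonal, $((\mathcal{S}_p)^2)_{ii}=\sum_k (s_{ik}^p)^2=\sum_{j\sim i}((d_i)^p+(d_j)^p)^{2/p}$, and squaring and summing over $i$ reproduces the first sum in the claim. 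For $i\neq j$, $((\mathcal{S}_p)^2)_{ij}=\sum_{k\sim i,\,k\sim j}((d_i)^p+(d_k)^p)^{1/p}((d_k)^p+(d_j)^p)^{1/p}$ is supported on the common neighbours of $i$ and $j$, and squaring and summing over the ordered pairs with $i\neq j$ reproduces the second sum. Adding the two pieces gives the asserted formula for $N_4$.

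The main obstacle is simply the bookkeeping in part $(5)$: a direct expansion of $\mathrm{tr}((\mathcal{S}_p)^4)$ forces a case analysis of the possible shapes of a closed $4$-walk (back-and-forth walks along a single edge, and genuine four-cycles or their degenerations), which is error-prone. Routing the computation through $((\mathcal{S}_p)^2)^2$ dissolves this case analysis: the diagonal of $(\mathcal{S}_p)^2$ automatically collects the ``return'' contributions and its off-diagonal collects the common-neighbour contributions, so matching the two halves of the trace to the two terms of the formula becomes transparent.
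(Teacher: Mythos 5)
Your proposal is correct and follows essentially the same route as the paper: direct trace computations for $N_0,N_1,N_2$, factoring the cube as $\mathcal{S}_p\cdot(\mathcal{S}_p)^2$ for $N_3$, and evaluating $N_4$ as the Frobenius norm squared of $(\mathcal{S}_p)^2$ split into diagonal and off-diagonal parts. No substantive differences.
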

\begin{proof}
$(1)$ $N_{0}=\sum\limits_{i=1}^{n}(\xi_{i})^{0}=n$.

$(2)$ $N_{1}=tr(\mathcal{S}_{p})=0$.

$(3)$
For any $1\leq i,j\leq n$ and $i\neq j$, we have
\begin{align*}
\left(\left(\mathcal{S}_{p}\right)^{2}\right)_{ij}
 =& \sum_{k=1}^{n}(\mathcal{S}_{p})_{ik}(\mathcal{S}_{p})_{kj} \\
 =& \sum_{\substack{k\sim i, k\sim j \\ 1\leq k\leq n}}(\mathcal{S}_{p})_{ik}(\mathcal{S}_{p})_{kj}\\
 =& \sum_{\substack{k\sim i, k\sim j \\ 1\leq k\leq n}}\left(\left(d_{i}\right)^{p}+\left(d_{k}\right)^{p}\right)^{\frac{1}{p}} \left((d_{k})^{p}+(d_{j})^{p}\right)^{\frac{1}{p}}.
\end{align*}

For any $1\leq i\leq n$, we have

$$(\left(\mathcal{S}_{p}\right)^{2})_{ii}=\sum_{j=1}^{n}\left(\mathcal{S}_{p}\right)_{ij}\left(\mathcal{S}_{p}\right)_{ji}=\sum_{\substack{i\sim j\\ 1\leq j\leq n}}\left(\left(\mathcal{S}_{p}\right)_{ij}\right)^{2}=\sum_{\substack{i\sim j\\ 1\leq j\leq n}}\left(\left(d_{i}\right)^{p}+(d_{j})^{p}\right)^{\frac{2}{p}}.$$

Thus $$N_{2}=tr(\left(\mathcal{S}_{p}\right)^{2})=\sum_{i=1}^{n}\left(\sum_{\substack{i\sim j\\ 1\leq j\leq n}}\left((d_{i})^{p}+(d_{j})^{p}\right)^{\frac{2}{p}}\right)=2\sum\limits_{\substack{i\sim j\\ 1\leq i,j\leq n}}\left((d_{i})^{p}+(d_{j})^{p}\right)^{\frac{2}{p}}.$$

$(4)$
For any $1\leq i\leq n$, we have
\begin{align*}
\left(\left(\mathcal{S}_{p}\right)^{3}\right)_{ii}
 =& \sum\limits_{j=1}^{n}(\mathcal{S}_{p})_{ij}(\left(\mathcal{S}_{p}\right)^{2})_{ji} \\
 =& \sum\limits_{\substack{i\sim j\\ 1\leq j\leq n}}\left((d_{i})^{p}+(d_{j})^{p}\right)^{\frac{1}{p}}(\left(\mathcal{S}_{p}\right)^{2})_{ji}\\
 =& \sum\limits_{\substack{i\sim j\\ 1\leq j\leq n}}\left(\left((d_{i})^{p}+(d_{j})^{p}\right)^{\frac{1}{p}} \times \sum\limits_{\substack{j\sim k, k\sim i \\ 1\leq k\leq n}}\left((d_{i})^{p}+(d_{k})^{p}\right)^{\frac{1}{p}} \left((d_{k})^{p}+(d_{j})^{p}\right)^{\frac{1}{p}}\right).
\end{align*}
Thus
\begin{align*}
N_{3}
 =& tr(\left(\mathcal{S}_{p}\right)^{3})=\sum_{i=1}^{n}\left( \left(\left(\mathcal{S}_{p}\right)^{3}\right)_{ii} \right) \\
 =& \sum\limits_{i=1}^{n}\left(\sum\limits_{\substack{i\sim j\\ 1\leq j\leq n}}\left(\left((d_{i})^{p}+(d_{j})^{p}\right)^{\frac{1}{p}}\sum\limits_{\substack{j\sim k, k\sim i \\ 1\leq k\leq n}}\left((d_{i})^{p}+(d_{k})^{p}\right)^{\frac{1}{p}} \left((d_{k})^{p}+(d_{j})^{p}\right)^{\frac{1}{p}}\right)\right)\\
 =& 2\sum\limits_{\substack{i\sim j\\ 1\leq i,j\leq n}}\left(\left((d_{i})^{p}+(d_{j})^{p}\right)^{\frac{1}{p}}\sum\limits_{\substack{j\sim k, k\sim i \\ 1\leq k\leq n}}\left((d_{i})^{p}+(d_{k})^{p}\right)^{\frac{1}{p}} \left((d_{k})^{p}+(d_{j})^{p}\right)^{\frac{1}{p}}\right).
\end{align*}

$(5)$
Since $tr\left(\left(\mathcal{S}_{p}\right)^{4}\right)=||\left(\mathcal{S}_{p}\right)^{2}||_{F}^{2}$, where $||\left(\mathcal{S}_{p}\right)^{2}||_{F}$ is the Frobenius norm of $\left(\mathcal{S}_{p}\right)^{2}$, thus
\begin{align*}
N_{4}
 =& tr\left(\left(\mathcal{S}_{p}\right)^{4}\right) \\
 =& \sum\limits_{i,j=1}^{n}\left(\left(\left(\mathcal{S}_{p}\right)^{2}\right)_{ij}\right)^{2}\\
 =& \sum\limits_{i=j}\left(\left(\left(\mathcal{S}_{p}\right)^{2}\right)_{ij}\right)^{2}+ \sum\limits_{i\neq j}\left(\left(\left(\mathcal{S}_{p}\right)^{2}\right)_{ij}\right)^{2}\\
 =& \sum\limits_{i=1}^{n} \left(\sum\limits_{\substack{i\sim j \\ 1\leq j\leq n}}\left(\left(d_{i}\right)^{p}+\left(d_{j}\right)^{p}\right)^{\frac{2}{p}}\right)^{2}+
\sum\limits_{i\neq j}\left(\sum\limits_{\substack{k\sim i, k\sim j \\ 1\leq k\leq n}}\left((d_{i})^{p}+(d_{k})^{p}\right)^{\frac{1}{p}} \left((d_{k})^{p}+(d_{j})^{p}\right)^{\frac{1}{p}}\right)^{2}.
\end{align*}
\end{proof}

In the following, we obtain some bounds of the $p$-Sombor spectral moment.
\begin{theorem}\label{t2-2}
Let $G$ be a graph with $|E(G)|=m$ and the maximum $($resp. minimum$)$ degree $\Delta$ $($resp. $\delta$$)$, $SO_{p}(G)$ be the $p$-Sombor index of $G$, $N_{2}$ be the $2$-th $p$-Sombor spectral moment. Then
$$ \sqrt{\frac{1}{2}N_{2}+2^{\frac{2}{p}}\delta^{2} m(m-1)}\leq SO_{p}(G)\leq \sqrt{\frac{1}{2}N_{2}+2^{\frac{2}{p}}\Delta^{2} m(m-1)},$$
with equality if and only if $G$ is a regular graph.
\end{theorem}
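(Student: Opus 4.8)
The plan is to obtain the estimate by squaring $SO_p(G)$ and matching its diagonal part with $N_2$. For each edge $e=v_iv_j\in E(G)$ write $w_e=((d_i)^p+(d_j)^p)^{1/p}$, so that $SO_p(G)=\sum_{e\in E(G)}w_e$ and, by Lemma~\ref{l2-1}$(3)$, $\sum_{e\in E(G)}w_e^2=\tfrac12 N_2$. Expanding the square gives
\[
SO_p(G)^2=\sum_{e\in E(G)}w_e^2+\sum_{e\neq f}w_ew_f=\tfrac12 N_2+\sum_{e\neq f}w_ew_f,
\]
where the last sum ranges over the $m(m-1)$ ordered pairs of distinct edges. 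Thus everything reduces to two-sided bounds on $\sum_{e\neq f} w_e w_f$.

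The key step is the uniform per-edge estimate $2^{1/p}\delta\le w_e\le 2^{1/p}\Delta$. For $p>0$ this follows at once from $\delta\le d_i,d_j\le\Delta$ together with the monotonicity of $t\mapsto t^p$ and $t\mapsto t^{1/p}$. The one point demanding care --- and the only real obstacle in an otherwise routine argument --- is the case $p<0$: there $t\mapsto t^p$ reverses the inequalities between the degrees and their $p$-th powers, but $t\mapsto t^{1/p}$ reverses them a second time, so the two reversals cancel and the same two-sided bound persists. One uses here that every edge joins vertices of degree at least $1$, so all powers are defined; if $\delta=0$ the lower estimate degenerates to the trivial $w_e\ge 0$.

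Granting the per-edge bound, each product obeys $2^{2/p}\delta^2\le w_ew_f\le 2^{2/p}\Delta^2$, and summing over the $m(m-1)$ ordered pairs yields
\[
2^{2/p}\delta^2 m(m-1)\le \sum_{e\neq f}w_ew_f\le 2^{2/p}\Delta^2 m(m-1).
\]
Substituting into the squared identity and taking positive square roots gives exactly the two claimed inequalities. For the equality discussion: if $G$ is $r$-regular then $\delta=\Delta=r$, every $w_e=2^{1/p}r$, and a direct check gives $\tfrac12 N_2+2^{2/p}r^2 m(m-1)=2^{2/p}r^2m^2=SO_p(G)^2$, so both bounds are attained. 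Conversely, equality in the lower (resp.\ upper) bound forces $w_ew_f=2^{2/p}\delta^2$ (resp.\ $2^{2/p}\Delta^2$) for every pair of distinct edges, hence $w_e=2^{1/p}\delta$ (resp.\ $2^{1/p}\Delta$) on each edge; this means $(d_i)^p+(d_j)^p=2\delta^p$ (resp.\ $2\Delta^p$) for every edge, forcing all degrees to coincide, i.e.\ $G$ is regular.
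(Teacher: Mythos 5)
Your proof is correct and takes essentially the same route as the paper: expand $\left(SO_p(G)\right)^2$, identify the diagonal part of the square with $\tfrac{1}{2}N_{2}$ via Lemma \ref{l2-1}(3), and bound the $m(m-1)$ off-diagonal products between $2^{\frac{2}{p}}\delta^{2}$ and $2^{\frac{2}{p}}\Delta^{2}$. The only difference is that you spell out the per-edge estimate $2^{\frac{1}{p}}\delta\leq ((d_i)^p+(d_j)^p)^{\frac{1}{p}}\leq 2^{\frac{1}{p}}\Delta$ (including the $p<0$ case) and the equality analysis, both of which the paper leaves implicit.
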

\begin{proof}
By Lemma \ref{l2-1}, we have
\begin{align*}
\left(SO_{p}(G)\right)^{2}
 =& \left(\sum_{v_{i}v_{j}\in E(G)}\left((d_{i})^{p}+(d_{j})^{p}\right)^{\frac{1}{p}}\right)^{2} \\
 =& \sum_{i\sim j}\left((d_{i})^{p}+(d_{j})^{p}\right)^{\frac{2}{p}}+\sum_{\substack{i\sim j, k\sim l \\ v_{i}v_{j}\neq v_{k}v_{l}}} \left( (d_{i})^{p}+(d_{j})^{p} \right)^{\frac{1}{p}} \left((d_{k})^{p}+(d_{l})^{p}\right)^{\frac{1}{p}} \\
 =& \frac{1}{2}N_{2}+\sum_{\substack{i\sim j, k\sim l \\ v_{i}v_{j}\neq v_{k}v_{l}}} \left( (d_{i})^{p}+(d_{j})^{p} \right)^{\frac{1}{p}} \left((d_{k})^{p}+(d_{l})^{p}\right)^{\frac{1}{p}}\\
 \leq& \frac{1}{2}N_{2}+2^{\frac{2}{p}}\Delta^{2} m(m-1).
\end{align*}
Thus, $SO_{p}(G)\leq \sqrt{\frac{1}{2}N_{2}+2^{\frac{2}{p}}\Delta^{2} m(m-1)}$, with equality if and only if $G$ is a regular graph.
Similarly, we have $SO_{p}(G)\geq \sqrt{\frac{1}{2}N_{2}+2^{\frac{2}{p}}\delta^{2} m(m-1)}$, with equality if and only if $G$ is a regular graph.
\end{proof}

\begin{theorem}\label{t2-3}
Let $G$ be a graph with the maximum $($resp. minimum$)$ degree $\Delta$ $($resp. $\delta$ $)$, $SO_{p}(G)$ be the $p$-Sombor index of $G$, $N_{2}$ be the $2$-th $p$-Sombor spectral moment. Then
$$ \frac{N_{2}}{2^{(1+\frac{1}{p})}\Delta}\leq SO_{p}(G)\leq \frac{N_{2}}{2^{(1+\frac{1}{p})}\delta},$$
with equality if and only if $G$ is a regular graph.
\end{theorem}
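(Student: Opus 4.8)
The plan is to regard both $SO_{p}(G)$ and $\tfrac12 N_{2}$ as sums over the edges of $G$ of a single edge-weight and of its square, and then to extract a uniform pointwise bound on that weight from the quadratic sum. First I would fix notation: for each edge $v_{i}v_{j}\in E(G)$ set $w_{ij}=\left((d_{i})^{p}+(d_{j})^{p}\right)^{\frac{1}{p}}$. By definition $SO_{p}(G)=\sum_{i\sim j}w_{ij}$, while Lemma \ref{l2-1}$(3)$ gives $\tfrac12 N_{2}=\sum_{i\sim j}w_{ij}^{2}$, where in both sums each edge is counted exactly once.

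The key pointwise estimate is $2^{\frac{1}{p}}\delta\le w_{ij}\le 2^{\frac{1}{p}}\Delta$ for every edge. To justify it I would write $w_{ij}=2^{\frac{1}{p}}M_{p}(d_{i},d_{j})$, where $M_{p}(x,y)=\left(\frac{x^{p}+y^{p}}{2}\right)^{\frac{1}{p}}$ is the power (generalized) mean. For every real $p\ne 0$ the power mean satisfies $\min\{x,y\}\le M_{p}(x,y)\le\max\{x,y\}$, so from $\delta\le d_{i},d_{j}\le\Delta$ we get $\delta\le M_{p}(d_{i},d_{j})\le\Delta$, and multiplying by the positive factor $2^{\frac{1}{p}}$ yields the claim. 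This is the step requiring the most care, precisely because $p$ may be negative: one cannot simply bound $d_{i}^{p}$ by $\Delta^{p}$ termwise, and it is the sign-free power-mean inequality that makes the estimate hold uniformly in $p$.

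With the pointwise bound in hand the rest is immediate. I would factor one copy of $w_{ij}$ out of the quadratic sum:
\begin{align*}
\tfrac12 N_{2}=\sum_{i\sim j}w_{ij}\cdot w_{ij}\le 2^{\frac{1}{p}}\Delta\sum_{i\sim j}w_{ij}=2^{\frac{1}{p}}\Delta\,SO_{p}(G),
\end{align*}
which rearranges to the lower bound $SO_{p}(G)\ge \frac{N_{2}}{2^{(1+\frac{1}{p})}\Delta}$; replacing the upper estimate $w_{ij}\le 2^{\frac{1}{p}}\Delta$ by the lower estimate $w_{ij}\ge 2^{\frac{1}{p}}\delta$ in the same computation gives $\tfrac12 N_{2}\ge 2^{\frac{1}{p}}\delta\,SO_{p}(G)$, i.e.\ $SO_{p}(G)\le \frac{N_{2}}{2^{(1+\frac{1}{p})}\delta}$. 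For the equality analysis I note that equality in either estimate forces $w_{ij}=2^{\frac{1}{p}}\Delta$ (resp.\ $=2^{\frac{1}{p}}\delta$) on every edge; by the equality case of the power-mean inequality this means $d_{i}=d_{j}=\Delta$ (resp.\ $=\delta$) for every edge, i.e.\ $G$ is regular, and conversely regularity makes both bounds exact.
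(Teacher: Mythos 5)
Your proof is correct and follows essentially the same route as the paper: the paper likewise writes $N_{2}=2\sum_{i\sim j}\left((d_{i})^{p}+(d_{j})^{p}\right)^{\frac{2}{p}}$, peels off one factor of $\left((d_{i})^{p}+(d_{j})^{p}\right)^{\frac{1}{p}}$, and bounds it by $2^{\frac{1}{p}}\delta$ (resp.\ $2^{\frac{1}{p}}\Delta$) to compare with $SO_{p}(G)$. Your explicit power-mean justification of the pointwise bound $2^{\frac{1}{p}}\delta\le w_{ij}\le 2^{\frac{1}{p}}\Delta$, valid uniformly for $p<0$ as well, is a welcome clarification of a step the paper leaves implicit, but it is not a different argument.
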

\begin{proof}
By Lemma \ref{l2-1}, we have
\begin{align*}
N_{2}
 =& tr\left(\mathcal{S}_{p}\right)^{2} \\
 =& 2\sum_{i\sim j}\left((d_{i})^{p}+(d_{j})^{p}\right)^{\frac{2}{p}} \\
 \geq& 2^{(1+\frac{1}{p})}\delta \sum_{i\sim j}\left((d_{i})^{p}+(d_{j})^{p}\right)^{\frac{1}{p}} \\
 =& 2^{(1+\frac{1}{p})}\delta SO_{p}(G).
\end{align*}
Thus, $SO_{p}(G)\leq \frac{N_{2}}{2^{(1+\frac{1}{p})}\delta}$, with equality if and only if $G$ is a regular graph.

Similarly, $SO_{p}(G)\geq \frac{N_{2}}{2^{(1+\frac{1}{p})}\Delta}$, with equality if and only if $G$ is a regular graph.
\end{proof}

Let $N(u)$ be the set of neighbors of vertex $u$ in $G$, $t_{\max}=\max\limits_{uv\in E(G)}|N(u)\bigcap N(v)|$ and $t_{\min}=\min\limits_{uv\in E(G)}|N(u)\bigcap N(v)|$.

\begin{theorem}\label{t2-4}
Let $G$ be a graph with $t_{\max}, t_{\min}\geq 1$, $SO_{p}(G)$ be the p-Sombor index of $G$, $N_{3}$ be the $3$-th $p$-Sombor spectral moment. Then
$$ \frac{N_{3}}{2^{(1+\frac{2}{p})}\Delta^{2}t_{\max}}\leq SO_{p}(G)\leq \frac{N_{3}}{2^{(1+\frac{2}{p})}\delta^{2}t_{\min}},$$
with equality if and only if $G$ is a regular graph and $t_{\max}=t_{\min}$.
\end{theorem}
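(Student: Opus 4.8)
The plan is to work directly from the closed form for $N_3$ in Lemma \ref{l2-1}$(4)$, which expresses $N_3$ as $2$ times a sum over edges $v_iv_j\in E(G)$ of the main weight $((d_i)^p+(d_j)^p)^{\frac{1}{p}}$ times an inner sum over common neighbours $v_k$ of the product $((d_i)^p+(d_k)^p)^{\frac{1}{p}}((d_k)^p+(d_j)^p)^{\frac{1}{p}}$. The two facts driving the estimate are: for any edge $v_av_b$ the weight satisfies $2^{\frac{1}{p}}\delta \le ((d_a)^p+(d_b)^p)^{\frac{1}{p}}\le 2^{\frac{1}{p}}\Delta$, immediate from $\delta\le d_a,d_b\le\Delta$; and the inner sum has exactly $|N(v_i)\cap N(v_j)|$ terms, which lies between $t_{\min}$ and $t_{\max}$ since $v_i\sim v_j$. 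I will leave the main weight $((d_i)^p+(d_j)^p)^{\frac{1}{p}}$ untouched in each edge term so that, after summing, it reassembles into $SO_p(G)$.

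For the upper bound on $SO_p(G)$ I lower-bound $N_3$: in each edge term I replace both inner factors by $2^{\frac{1}{p}}\delta$, making each product at least $2^{\frac{2}{p}}\delta^2$, and replace the number of common neighbours by $t_{\min}$. This gives $N_3\ge 2\cdot 2^{\frac{2}{p}}\delta^2 t_{\min}\sum_{v_iv_j\in E(G)}((d_i)^p+(d_j)^p)^{\frac{1}{p}}=2^{1+\frac{2}{p}}\delta^2 t_{\min}\,SO_p(G)$, which rearranges to the stated upper bound. The lower bound is the mirror image: replacing the inner factors by $2^{\frac{1}{p}}\Delta$ and the common-neighbour count by $t_{\max}$ yields $N_3\le 2^{1+\frac{2}{p}}\Delta^2 t_{\max}\,SO_p(G)$. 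Both steps are routine given the weight bounds above, so I expect no real difficulty here, and they parallel the arguments of Theorems \ref{t2-2} and \ref{t2-3}.

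The delicate part is the equality discussion. Equality in the upper bound forces $((d_i)^p+(d_k)^p)^{\frac{1}{p}}=((d_k)^p+(d_j)^p)^{\frac{1}{p}}=2^{\frac{1}{p}}\delta$, hence $d_i=d_j=d_k=\delta$, for every edge $v_iv_j$ and every common neighbour $v_k$, and simultaneously $|N(v_i)\cap N(v_j)|=t_{\min}$ on every edge. Because $t_{\min}\ge 1$ guarantees that each edge has at least one common neighbour, every endpoint of every edge is pinned to degree $\delta$; with $\delta\ge 1$ there are no isolated vertices, so all degrees equal $\delta$ and $G$ is regular, while the common-neighbour equalities force $t_{\max}=t_{\min}$. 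The lower-bound case is handled identically with $\Delta$ and $t_{\max}$. I anticipate that the only point requiring care is this propagation from the local (per-edge) degree equalities to global regularity, which works precisely because $t_{\min}\ge 1$ ensures that every edge carries a common neighbour and hence that each degree equality is actually triggered.
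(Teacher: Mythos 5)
Your proposal is correct and follows essentially the same route as the paper: bound the inner factors of the $N_3$ expansion from Lemma \ref{l2-1}(4) by $2^{\frac{1}{p}}\delta$ or $2^{\frac{1}{p}}\Delta$, bound the common-neighbour count by $t_{\min}$ or $t_{\max}$, and let the untouched edge weight reassemble into $SO_p(G)$. Your equality discussion is in fact slightly more careful than the paper's, which states the conditions without spelling out why $t_{\min}\geq 1$ is needed to propagate the per-edge degree constraints to global regularity.
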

\begin{proof}
By Lemma \ref{l2-1}, we have
\begin{align*}
N_{3}=tr((\mathcal{S}_{p})^{3})
 =& 2\sum\limits_{\substack{i\sim j\\ 1\leq i,j\leq n}}\left(\left((d_{i})^{p}+(d_{j})^{p}\right)^{\frac{1}{p}}\sum\limits_{\substack{k\sim i, k\sim j \\ 1\leq k\leq n}}\left((d_{i})^{p}+(d_{k})^{p}\right)^{\frac{1}{p}} \left((d_{k})^{p}+(d_{j})^{p}\right)^{\frac{1}{p}}\right) \\
 \geq& 2^{(1+\frac{2}{p})}\delta^{2}\sum\limits_{\substack{i\sim j\\ 1\leq i,j\leq n}}\left(\left((d_{i})^{p}+(d_{j})^{p}\right)^{\frac{1}{p}}\sum\limits_{\substack{k\sim i, k\sim j \\ 1\leq k\leq n}}1\right) \\
 \geq& 2^{(1+\frac{2}{p})}\delta^{2}t_{\min}\sum\limits_{\substack{i\sim j\\ 1\leq i,j\leq n}}\left((d_{i})^{p}+(d_{j})^{p}\right)^{\frac{1}{p}}\\
 =& 2^{(1+\frac{2}{p})}\delta^{2}t_{\min} SO_{p}(G).
\end{align*}
Thus, $SO_{p}(G)\leq \frac{N_{3}}{2^{(1+\frac{2}{p})}\delta^{2}t_{\min}}$, with equality if and only if $G$ is a regular graph and any two adjacent vertices have the same number of common neighbours, then $G$ is a regular graph and $t_{\max}=t_{\min}$.
Similarly, we also have $SO_{p}(G)\geq \frac{N_{3}}{2^{(1+\frac{2}{p})}\Delta^{2}t_{\max}}$, with equality if and only if $G$ is a regular graph and $t_{\max}=t_{\min}$.
\end{proof}

\begin{theorem}\label{t2-5}
Let $G$ be a graph with $|V(G)|=n$, $|E(G)|=m$, the maximum $($resp. minimum$)$ degree $\Delta$ $($resp. $\delta$$)$, $M_{1}(G)$ be the first Zagreb index of $G$, $N_{4}$ be the $4$-th $p$-Sombor spectral moment. Then
$$ \frac{N_{4}-2^{\frac{4}{p}}\Delta^{5}(M_{1}(G)-2m)}{2^{(1+\frac{3}{p})}\Delta^{4}} \leq SO_{p}(G)\leq \frac{N_{4}-2^{\frac{4}{p}}\delta^{4}(M_{1}(G)-2m)}{2^{(1+\frac{3}{p})}\delta^{4}},$$
with left equality iff $G\cong K_{\Delta, \Delta}$, with right equality iff $G$ is a $\delta$-regular graph without $C_{4}$.
\end{theorem}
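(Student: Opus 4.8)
The plan is to start from the closed form for $N_{4}$ provided by Lemma \ref{l2-1}(5), which already writes $N_{4}=A+B$ as the sum of a diagonal part $A=\sum_{i=1}^{n}\left(\sum_{i\sim j}((d_{i})^{p}+(d_{j})^{p})^{\frac{2}{p}}\right)^{2}$ and an off-diagonal part $B=\sum_{i\neq j}\left(\sum_{k\sim i,\,k\sim j}((d_{i})^{p}+(d_{k})^{p})^{\frac{1}{p}}((d_{j})^{p}+(d_{k})^{p})^{\frac{1}{p}}\right)^{2}$. Abbreviating the edge weight as $w_{ij}=((d_{i})^{p}+(d_{j})^{p})^{\frac{1}{p}}$, the single elementary fact driving the whole argument is $2^{\frac{1}{p}}\delta\le w_{ij}\le 2^{\frac{1}{p}}\Delta$ for every edge; this holds for every $p\neq 0$ (one checks directly that the value stays between $2^{\frac1p}\delta$ and $2^{\frac1p}\Delta$ regardless of the sign of $p$), and either equality forces $d_{i}=d_{j}$. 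The theorem then follows by bounding $A$ both ways by a multiple of $SO_{p}(G)$, bounding $B$ both ways by a multiple of $M_{1}(G)-2m$, adding, and solving for $SO_{p}(G)$.

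To control $A$ I would argue termwise, factoring the square as $\left(\sum_{i\sim j}w_{ij}^{2}\right)^{2}=\left(\sum_{i\sim j}w_{ij}^{2}\right)\left(\sum_{i\sim j}w_{ij}^{2}\right)$ and estimating the two factors differently. Using $\sum_{i\sim j}w_{ij}^{2}\ge 2^{\frac{2}{p}}\delta^{2}d_{i}$ (there are $d_{i}$ summands, each at least $2^{\frac2p}\delta^2$) for one factor and $w_{ij}^{2}\ge 2^{\frac{1}{p}}\delta\,w_{ij}$ for the other gives $\left(\sum_{i\sim j}w_{ij}^{2}\right)^{2}\ge 2^{\frac{3}{p}}\delta^{3}d_{i}\sum_{i\sim j}w_{ij}\ge 2^{\frac{3}{p}}\delta^{4}\sum_{i\sim j}w_{ij}$ after invoking $d_{i}\ge\delta$; the mirror chain with $\Delta$ gives the reverse. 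Summing over $i$ and using $\sum_{i}\sum_{i\sim j}w_{ij}=2\,SO_{p}(G)$ yields $2^{(1+\frac{3}{p})}\delta^{4}SO_{p}(G)\le A\le 2^{(1+\frac{3}{p})}\Delta^{4}SO_{p}(G)$, with either equality precisely when $G$ is $\delta$- (resp. $\Delta$-) regular.

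For $B$ the two directions genuinely differ, which is exactly the source of the asymmetric exponents $\delta^{4}$ versus $\Delta^{5}$. Writing $C_{ij}=N(v_{i})\cap N(v_{j})$, for the lower bound I would discard cross terms, $\left(\sum_{k}w_{ik}w_{jk}\right)^{2}\ge\sum_{k}(w_{ik}w_{jk})^{2}\ge |C_{ij}|\,2^{\frac{4}{p}}\delta^{4}$, whereas for the upper bound I would estimate each summand crudely, $\left(\sum_{k}w_{ik}w_{jk}\right)^{2}\le |C_{ij}|^{2}2^{\frac{4}{p}}\Delta^{4}$, and then replace $|C_{ij}|^{2}$ by $\Delta|C_{ij}|$ using $|C_{ij}|\le d_{i}\le\Delta$. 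Both directions close with the counting identity $\sum_{i\neq j}|C_{ij}|=\sum_{k}d_{k}(d_{k}-1)=M_{1}(G)-2m$ (each ordered path $v_{i}v_{k}v_{j}$ is counted once), giving $2^{\frac{4}{p}}\delta^{4}(M_{1}(G)-2m)\le B\le 2^{\frac{4}{p}}\Delta^{5}(M_{1}(G)-2m)$. Adding the estimates for $A$ and $B$ and dividing by the positive constants $2^{(1+\frac{3}{p})}\Delta^{4}$ and $2^{(1+\frac{3}{p})}\delta^{4}$ produces the two displayed inequalities.

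The equality analysis is where I expect the actual work. Regularity is forced by the $A$-estimates on both sides. For the lower bound, the step $\left(\sum_{k}\cdots\right)^{2}=\sum_{k}(\cdots)^{2}$ additionally forces $|C_{ij}|\le 1$ for all $i\neq j$, i.e.\ $G$ contains no $C_{4}$; combined with $\delta$-regularity this is the stated condition. For the upper bound, equality in $|C_{ij}|^{2}=\Delta|C_{ij}|$ forces $|C_{ij}|\in\{0,\Delta\}$ for every pair; in a $\Delta$-regular graph an adjacent pair has at most $\Delta-1$ common neighbours, so adjacent pairs must have none (the graph is triangle-free), while a non-adjacent pair with $\Delta$ common neighbours satisfies $N(v_{i})=N(v_{j})$. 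The main obstacle is converting these local constraints into the global identification $G\cong K_{\Delta,\Delta}$: one shows that the relation $N(v_{i})=N(v_{j})$ partitions the vertices into twin-classes, and that $\Delta$-regularity together with triangle-freeness forces exactly two such classes, each of size $\Delta$ with every cross pair adjacent, which is precisely $K_{\Delta,\Delta}$. Checking that $K_{\Delta,\Delta}$ (resp.\ any $\delta$-regular $C_{4}$-free graph) in fact attains equality is then a direct substitution into $A$ and $B$.
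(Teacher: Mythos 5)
Your proof is correct and takes essentially the same route as the paper: the same split $N_{4}=A+B$ from Lemma \ref{l2-1}(5), the same termwise bounds $2^{\frac{1}{p}}\delta\le((d_{i})^{p}+(d_{j})^{p})^{\frac{1}{p}}\le 2^{\frac{1}{p}}\Delta$ applied to each part (your superadditivity step $\left(\sum_{k}a_{k}\right)^{2}\ge\sum_{k}a_{k}^{2}$ for the lower bound on $B$ is a cosmetic variant of the paper's $|C_{ij}|^{2}\ge|C_{ij}|$), and the same path-counting identity $\sum_{i\ne j}|C_{ij}|=M_{1}(G)-2m$. Your equality analysis is in fact more detailed than the paper's one-line assertion; note only that without a connectedness hypothesis the local conditions you derive are also satisfied by disjoint unions of copies of $K_{\Delta,\Delta}$, so the final reduction to exactly two twin-classes (and hence to $K_{\Delta,\Delta}$ itself) requires assuming $G$ connected.
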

\begin{proof}
Let $|P_{3}|$ be the numbers of path with length $2$ in $G$. Then
$$|P_{3}|=\sum_{1\leq i\leq n}\tbinom{d_{i}}{2}=\frac{1}{2}\sum_{1\leq i\leq n}d_{i}^{2}-\frac{1}{2}\sum_{1\leq i\leq n}d_{i}=\frac{1}{2}M_{1}(G)-m,$$
thus
$$ \sum_{\substack{1\leq i,j\leq n\\ i\neq j}} \left( \sum_{\substack{k\sim i, k\sim j\\ 1\leq k\leq n}} 1 \right)= 2|P_{3}|=M_{1}(G)-2m.$$
\indent Recalling that
$$N_{4}=\sum\limits_{i=1}^{n}\left(\sum\limits_{\substack{i\sim j \\ 1\leq j\leq n}}\left((d_{i})^{p}+(d_{j})^{p}\right)^{\frac{2}{p}}\right)^{2}+  \sum\limits_{i\neq j}\left(\sum\limits_{\substack{k\sim i, k\sim j \\ 1\leq k\leq n}}\left((d_{i})^{p}+(d_{k})^{p}\right)^{\frac{1}{p}}\left((d_{j})^{p}+(d_{k})^{p}\right)^{\frac{1}{p}}\right)^{2},$$
then we have
\begin{equation}
\begin{split}
&\quad \sum\limits_{i=1}^{n}\left(\sum\limits_{\substack{i\sim j \\ 1\leq j\leq n}}\left((d_{i})^{p}+(d_{j})^{p}\right)^{\frac{2}{p}}\right)^{2}\\
&\leq  \sum\limits_{i=1}^{n}\left(2^{\frac{1}{p}}\Delta \sum\limits_{\substack{i\sim j \\ 1\leq j\leq n}}\left((d_{i})^{p}+(d_{j})^{p}\right)^{\frac{1}{p}}\right)^{2}\\
&\leq  2^{\frac{3}{p}}\Delta^{3} \sum\limits_{i=1}^{n} \left( \left( \sum\limits_{\substack{i\sim j \\ 1\leq j\leq n}}\left((d_{i})^{p}+(d_{j})^{p}\right)^{\frac{1}{p}}\right)\left( \sum\limits_{\substack{i\sim j \\ 1\leq j\leq n}} 1 \right) \right)\\
&=  2^{\frac{3}{p}}\Delta^{3} \sum\limits_{i=1}^{n} d_{i} \left( \sum\limits_{\substack{i\sim j \\ 1\leq j\leq n}}\left((d_{i})^{p}+(d_{j})^{p}\right)^{\frac{1}{p}}\right)\\
&\leq  2^{(1+\frac{3}{p})}\Delta^{4}SO_{p}(G).  \nonumber
\end{split}
\end{equation}

Similarly, we have
$$ \sum\limits_{i=1}^{n}\left(\sum\limits_{\substack{i\sim j \\ 1\leq j\leq n}}\left((d_{i})^{p}+(d_{j})^{p}\right)^{\frac{2}{p}}\right)^{2}\geq 2^{(1+\frac{3}{p})}\delta^{4}SO_{p}(G).     $$

On the other hand, we have
\begin{equation}
\begin{split}
&\quad \sum\limits_{i\neq j}\left(\sum\limits_{\substack{k\sim i, k\sim j \\ 1\leq k\leq n}}\left((d_{i})^{p}+(d_{k})^{p}\right)^{\frac{1}{p}}\left((d_{j})^{p}+(d_{k})^{p}\right)^{\frac{1}{p}}\right)^{2}\\
&\leq  2^{\frac{4}{p}}\Delta^{4} \sum\limits_{i\neq j}\left( \sum\limits_{\substack{k\sim i, k\sim j \\ 1\leq k\leq n}} 1 \right) \left( \sum\limits_{\substack{k\sim i, k\sim j \\ 1\leq k\leq n}} 1 \right)\\
&=  2^{\frac{4}{p}}\Delta^{4} \sum\limits_{i\neq j}\left( \sum\limits_{\substack{k\sim i, k\sim j \\ 1\leq k\leq n}} 1 \right) |N(u_{i})\cap N(u_{j})|\\
&\leq  2^{\frac{4}{p}}\Delta^{5} \sum\limits_{i\neq j}\left( \sum\limits_{\substack{k\sim i, k\sim j \\ 1\leq k\leq n}} 1 \right)\\
&=  2^{\frac{4}{p}}\Delta^{5}(M_{1}(G)-2m).  \nonumber
\end{split}
\end{equation}
We also have
\begin{equation}
\begin{split}
&\quad \sum\limits_{i\neq j}\left(\sum\limits_{\substack{k\sim i, k\sim j \\ 1\leq k\leq n}}\left((d_{i})^{p}+(d_{k})^{p}\right)^{\frac{1}{p}}\left((d_{j})^{p}+(d_{k})^{p}\right)^{\frac{1}{p}}\right)^{2}\\
&\geq  2^{\frac{4}{p}}\delta^{4} \sum\limits_{i\neq j}\left( \sum\limits_{\substack{k\sim i, k\sim j \\ 1\leq k\leq n}} 1 \right) \left( \sum\limits_{\substack{k\sim i, k\sim j \\ 1\leq k\leq n}} 1 \right)\\
&=  2^{\frac{4}{p}}\delta^{4} \sum\limits_{i\neq j}\left( \sum\limits_{\substack{k\sim i, k\sim j \\ 1\leq k\leq n}} 1 \right) |N(u_{i})\cap N(u_{j})|\\
&\geq  2^{\frac{4}{p}}\delta^{4} \sum\limits_{i\neq j}\left( \sum\limits_{\substack{k\sim i, k\sim j \\ 1\leq k\leq n}} 1 \right)\\
&=  2^{\frac{4}{p}}\delta^{4}(M_{1}(G)-2m).  \nonumber
\end{split}
\end{equation}

In summary, we have
$$ \frac{N_{4}-2^{\frac{4}{p}}\Delta^{5}(M_{1}(G)-2m)}{2^{(1+\frac{3}{p})}\Delta^{4}} \leq SO_{p}(G)\leq \frac{N_{4}-2^{\frac{4}{p}}\delta^{4}(M_{1}(G)-2m)}{2^{(1+\frac{3}{p})}\delta^{4}}.$$

In the following, we consider the sufficient and necessary conditions of the equalities hold.

For the lower bound, the equality holds if and only if $d_{i}=\Delta $ for all $1\leq i \leq n$ and $|N(u_{i})\cap N(u_{j})|=\Delta$, thus $G\cong K_{\Delta, \Delta}$.

For the upper bound, the equality holds if and only if $d_{i}=\delta $ for all $1\leq i \leq n$ and $|N(u_{i})\cap N(u_{j})|=1$ or $0$, thus $G$ is a $\delta$-regular graph without $C_{4}$.
\end{proof}

In the following, we obtain more bounds about $p$-Sombor index.
From the definition of $p$-Sombor index, we immediately have
\begin{lemma}\label{l2-6}
Let $G$ be a connected graph with $|V(G)|=n$. Then
$$ SO_{p}(G)\leq 2^{\frac{1}{p}-1}n(n-1)^{2},$$
with equality if and only if $G\cong K_{n}$.
\end{lemma}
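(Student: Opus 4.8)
The plan is to bound the $p$-Sombor index by controlling each edge's contribution uniformly and then summing over all edges. The key observation is that for any edge $v_iv_j \in E(G)$, both degrees satisfy $d_i, d_j \leq n-1$ in a graph on $n$ vertices, since a vertex can be adjacent to at most $n-1$ others. Therefore each summand $\left((d_i)^p + (d_j)^p\right)^{1/p}$ is at most $\left((n-1)^p + (n-1)^p\right)^{1/p} = \left(2(n-1)^p\right)^{1/p} = 2^{1/p}(n-1)$.

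Next I would count the edges. Since $G$ is a simple graph on $n$ vertices, we have $|E(G)| = m \leq \binom{n}{2} = \frac{n(n-1)}{2}$. Combining this with the per-edge bound gives
\begin{align*}
SO_p(G) = \sum_{v_iv_j \in E(G)} \left((d_i)^p + (d_j)^p\right)^{\frac{1}{p}} \leq m \cdot 2^{\frac{1}{p}}(n-1) \leq \frac{n(n-1)}{2} \cdot 2^{\frac{1}{p}}(n-1) = 2^{\frac{1}{p}-1} n(n-1)^2,
\end{align*}
which is exactly the claimed upper bound.

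For the equality analysis, I would trace back through the two inequalities used. The per-edge bound $\left((d_i)^p + (d_j)^p\right)^{1/p} \leq 2^{1/p}(n-1)$ holds with equality precisely when $d_i = d_j = n-1$ on every edge, forcing every vertex incident to an edge to have full degree $n-1$. The edge-count bound $m \leq \binom{n}{2}$ is tight exactly when $G$ has all possible edges, i.e., $G$ is complete. Both conditions hold simultaneously if and only if $G \cong K_n$, since in $K_n$ every vertex has degree $n-1$ and there are exactly $\binom{n}{2}$ edges.

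I expect no serious obstacle here, as the argument is a direct application of elementary degree and edge-count bounds; the only care needed is in the equality discussion, where one must verify that the two tightness conditions are compatible and jointly characterize $K_n$. A minor subtlety is that the function $x \mapsto x^{1/p}$ is increasing for $p > 0$ but decreasing for $p < 0$; however, since $(d_i)^p + (d_j)^p$ is monotone in the degrees in the appropriate direction for each sign of $p$, the composite map $(d_i, d_j) \mapsto \left((d_i)^p + (d_j)^p\right)^{1/p}$ remains nondecreasing in each degree in both cases, so the bound and its equality condition survive unchanged for all $p \neq 0$.
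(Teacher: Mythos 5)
Your proof is correct and is precisely the ``immediate from the definition'' argument the paper has in mind (the paper states this lemma without proof): bound each edge term by $2^{1/p}(n-1)$ using $d_i,d_j\le n-1$, bound $m$ by $\binom{n}{2}$, and observe both are tight exactly for $K_n$. Your care with the sign of $p$ is warranted and handled correctly, since $\partial_x\bigl(x^p+y^p\bigr)^{1/p}=\bigl(x^p+y^p\bigr)^{\frac{1}{p}-1}x^{p-1}>0$ for all $p\neq 0$, so the per-edge bound indeed holds for every admissible $p$.
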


\begin{theorem}\label{t2-7}
Let $G$ be a graph with $|V(G)|=n$. Then
$$ \frac{n\xi_{1}^{2}}{2^{(1+\frac{1}{p})}\Delta(n-1)} \leq SO_{p}(G)\leq \frac{n\xi_{1}}{2},$$
the left equality holds if $G\cong K_{n}$, the right equality holds if $G$ is a regular graph.
\end{theorem}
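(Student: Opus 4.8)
The plan is to treat the two bounds separately, drawing on the spectral-moment identities of Lemma \ref{l2-1} (in particular $N_{1}=0$), the variational characterization of $\xi_{1}$, and the already-proved inequality of Theorem \ref{t2-3}.

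For the upper bound $SO_{p}(G)\leq \frac{n\xi_{1}}{2}$, I would invoke the Rayleigh quotient $\xi_{1}=\max_{x\neq 0}\frac{x^{T}\mathcal{S}_{p}x}{x^{T}x}$ and test it on the all-ones vector $\mathbf{1}$. This gives $\xi_{1}\geq \frac{\mathbf{1}^{T}\mathcal{S}_{p}\mathbf{1}}{n}$, and since $\mathbf{1}^{T}\mathcal{S}_{p}\mathbf{1}=\sum_{i,j}s_{ij}^{p}=2SO_{p}(G)$ by the identity $SO_{p}(G)=\frac{1}{2}\sum_{i,j}s_{ij}^{p}$, I obtain $\xi_{1}\geq \frac{2SO_{p}(G)}{n}$, which rearranges to the claimed bound. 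Equality forces $\mathbf{1}$ to be a Perron eigenvector, i.e.\ all row sums $\sum_{k}s_{ik}^{p}$ equal; when $G$ is $r$-regular each such row sum is $2^{1/p}r^{2}$, so the bound is attained by regular graphs.

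For the lower bound the idea is to bottleneck $\xi_{1}$ through $N_{2}$. Using $N_{1}=\sum_{i}\xi_{i}=0$ I write $\xi_{1}=-\sum_{i\geq 2}\xi_{i}$ and apply Cauchy--Schwarz to the $n-1$ eigenvalues $\xi_{2},\dots,\xi_{n}$, obtaining $\xi_{1}^{2}=\bigl(\sum_{i\geq 2}\xi_{i}\bigr)^{2}\leq (n-1)\sum_{i\geq 2}\xi_{i}^{2}=(n-1)(N_{2}-\xi_{1}^{2})$, which rearranges to $N_{2}\geq \frac{n}{n-1}\xi_{1}^{2}$. Chaining this with $SO_{p}(G)\geq \frac{N_{2}}{2^{(1+1/p)}\Delta}$ from Theorem \ref{t2-3} yields $SO_{p}(G)\geq \frac{N_{2}}{2^{(1+1/p)}\Delta}\geq \frac{n\xi_{1}^{2}}{2^{(1+1/p)}\Delta(n-1)}$, as required.

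The inequalities themselves are routine; the part warranting care is the equality analysis, which is the expected obstacle. For the lower bound both links must be tight simultaneously: Theorem \ref{t2-3} is tight only for regular $G$, while the Cauchy--Schwarz step is tight only when $\xi_{2}=\cdots=\xi_{n}$. I would verify that $K_{n}$ meets both conditions, since $\mathcal{S}_{p}(K_{n})=2^{1/p}(n-1)(J-I)$ has spectrum consisting of $2^{1/p}(n-1)^{2}$ once and $-2^{1/p}(n-1)$ with multiplicity $n-1$, so $K_{n}$ realizes the lower bound. I do not anticipate a deeper difficulty beyond bookkeeping the chain of inequalities and confirming the stated extremal graphs.
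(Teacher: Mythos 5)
Your proposal is correct and follows essentially the same route as the paper: the upper bound via the Rayleigh quotient on the all-ones vector, and the lower bound by combining $\xi_{1}^{2}=\bigl(\sum_{i\geq 2}\xi_{i}\bigr)^{2}\leq (n-1)(N_{2}-\xi_{1}^{2})$ with the bound $N_{2}\leq 2^{(1+\frac{1}{p})}\Delta\, SO_{p}(G)$ from Theorem \ref{t2-3}. Your explicit verification that $K_{n}$ attains the lower bound matches the paper's check, so no further comment is needed.
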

\begin{proof}
Let $e=(1,1,\cdots, 1)^{T}\in R^{n}$, $\xi_{1}$ be the $p$-Sombor spectral radius of $G$. Then
$$\xi_{1}=\max_{X\neq 0}\frac{X^{T}\mathcal{S}_{p}X}{X^{T}X}\geq \frac{e^{T}\mathcal{S}_{p}e}{e^{T}e}=\frac{2SO_{p}(G)}{n},$$
thus $SO_{p}(G)\leq \frac{n\xi_{1}}{2}$.
If $G$ is a $k$-regular graph, then $S_{p}(G)=2^{\frac{1}{p}}kA(G)$, thus $\xi_{1}=2^{\frac{1}{p}}k\mu_{1}=2^{\frac{1}{p}}k^{2}$,
and  $SO_{p}(G)=(\frac{kn}{2})(2^{\frac{1}{p}}k)=\frac{n\xi_{1}}{2}$.

On the other hand, we have
$$\xi_{1}^{2}=\left( \sum_{i=2}^{n}\xi_{i} \right)^{2}\leq (n-1)\left( \sum_{i=2}^{n}\xi_{i}^{2} \right),$$
then by Theorem \ref{t2-3}, we have
$$ \frac{n\xi_{1}^{2}}{n-1}= \xi_{1}^{2}+\frac{\xi_{1}^{2}}{n-1}\leq \xi_{1}^{2}+\sum_{i=2}^{n}\xi_{i}^{2} = \sum_{i=1}^{n}\xi_{i}^{2}=N_{2}\leq 2^{(1+\frac{1}{p})}\Delta SO_{p}(G).$$
Thus, we have $ SO_{p}(G)\geq \frac{n\xi_{1}^{2}}{2^{(1+\frac{1}{p})}\Delta(n-1)}$. If $G\cong K_{n}$, then $S_{p}(G)=2^{\frac{1}{p}}(n-1)A(G)$, thus $\xi_{1}=2^{\frac{1}{p}}(n-1)\mu_{1}=2^{\frac{1}{p}}(n-1)^{2}$,
and  $SO_{p}(G)=(\frac{(n-1)n}{2})(2^{\frac{1}{p}}(n-1))=\frac{n\xi_{1}^{2}}{2^{(1+\frac{1}{p})}\Delta(n-1)}$.
\end{proof}

\begin{theorem}\label{t2-8}
Let $G$ be a graph with $|E(G)|=m$, $\sigma^{2}$ be the variance of $((d_{i})^{p}+(d_{j})^{p})^{\frac{1}{p}}$ appearing in $ SO_{p}(G)$. Then
$$ SO_{p}(G)=\sqrt{\frac{1}{2}mN_{2}-m^{2}\sigma^{2}}.$$
\end{theorem}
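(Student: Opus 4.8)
The plan is to recognize that the claimed identity is nothing more than the elementary ``mean of squares minus square of the mean'' formula for the variance, specialized to the multiset of edge-weights defining $SO_{p}(G)$. First I would fix notation: for each edge $e=v_{i}v_{j}\in E(G)$ write $w_{e}=\left((d_{i})^{p}+(d_{j})^{p}\right)^{\frac{1}{p}}$, so that the $m$ numbers $\{w_{e}:e\in E(G)\}$ are exactly the summands appearing in $SO_{p}(G)$. Their arithmetic mean is $\bar{w}=\frac{1}{m}\sum_{e}w_{e}=\frac{SO_{p}(G)}{m}$, and by definition $\sigma^{2}=\frac{1}{m}\sum_{e}(w_{e}-\bar{w})^{2}=\frac{1}{m}\sum_{e}w_{e}^{2}-\bar{w}^{2}$.

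The key input is to express the mean of the squares through $N_{2}$. Lemma \ref{l2-1}$(3)$ gives $N_{2}=2\sum_{\substack{i\sim j\\ 1\leq i,j\leq n}}\left((d_{i})^{p}+(d_{j})^{p}\right)^{\frac{2}{p}}=2\sum_{e}w_{e}^{2}$, where the edge-sum counts each edge once in the paper's convention; hence $\sum_{e}w_{e}^{2}=\frac{1}{2}N_{2}$. Substituting this together with $\bar{w}=SO_{p}(G)/m$ into the variance formula yields $\sigma^{2}=\frac{N_{2}}{2m}-\frac{SO_{p}(G)^{2}}{m^{2}}$. Multiplying through by $m^{2}$ and rearranging gives $SO_{p}(G)^{2}=\frac{1}{2}mN_{2}-m^{2}\sigma^{2}$, and taking the nonnegative square root (legitimate since $SO_{p}(G)\geq 0$) produces the stated identity.

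There is no genuinely hard step here; the only point requiring a moment's care is the bookkeeping factor of $2$ relating $N_{2}$ and $\sum_{e}w_{e}^{2}$. Concretely, $N_{2}=tr\left((\mathcal{S}_{p})^{2}\right)=\sum_{i,j}(\mathcal{S}_{p})_{ij}^{2}$ counts the contribution of each edge twice (from the symmetric off-diagonal pair $(i,j)$ and $(j,i)$), whereas $SO_{p}(G)$ and the variance $\sigma^{2}$ are defined over the set of $m$ distinct edges. Once this normalization is pinned down and combined with Lemma \ref{l2-1}$(3)$, the whole statement reduces to the textbook variance decomposition, and no further estimates, inequalities, or equality-case analysis are needed.
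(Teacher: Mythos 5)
Your proposal is correct and follows essentially the same route as the paper: apply the ``mean of squares minus square of the mean'' form of the variance to the $m$ edge-weights $w_{e}$, identify $\sum_{e}w_{e}^{2}=\frac{1}{2}N_{2}$ via Lemma \ref{l2-1}$(3)$ and $\bar{w}=SO_{p}(G)/m$, and rearrange. Your write-up is in fact slightly cleaner, since the paper's intermediate display contains a typographical slip (the squared-mean term is written with exponent $\frac{2}{p}$ where $\frac{1}{p}$ is intended) that your explicit bookkeeping of the factor of $2$ avoids.
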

\begin{proof}
By the definition of variance and $N_{2}=2\sum\limits_{\substack{i\sim j\\ 1\leq i,j\leq n}}\left((d_{i})^{p}+(d_{j})^{p}\right)^{\frac{2}{p}}$, we have
\begin{align*}
\sigma^{2}
 =& \frac{1}{m}\sum\limits_{\substack{i\sim j\\ 1\leq i,j\leq n}}\left((d_{i})^{p}+(d_{j})^{p}\right)^{\frac{2}{p}}-\left(\frac{1}{m}\sum\limits_{\substack{i\sim j\\ 1\leq i,j\leq n}}\left((d_{i})^{p}+(d_{j})^{p}\right)^{\frac{2}{p}}\right)^{2} \\
 =& \frac{N_{2}}{2m}-\frac{\left(SO_{p}(G)\right)^{2}}{m^{2}}.
\end{align*}
Thus, we have
$ SO_{p}(G)=\sqrt{\frac{1}{2}mN_{2}-m^{2}\sigma^{2}}.$
\end{proof}

The ISI index was defined as $ISI(G)=\sum\limits_{v_{i}v_{j}\in E(G)}\frac{d_{i}d_{j}}{d_{i}+d_{j}}$.
We can obtain the relationship between the $p$-Sombor index $SO_{p}(G)$ and ISI index $ISI(G)$.
\begin{theorem}\label{t4-5}
Let $G$ be a simple graph. Then
$$SO_{p}(G)\geq 2^{(\frac{1}{p}+1)}ISI(G),$$
with equality if and only if $G$ is a regular graph.
\end{theorem}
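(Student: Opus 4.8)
The plan is to prove the inequality \emph{termwise} and then sum over edges. Since
$SO_{p}(G)=\sum_{v_{i}v_{j}\in E(G)}\left((d_{i})^{p}+(d_{j})^{p}\right)^{1/p}$ and
$ISI(G)=\sum_{v_{i}v_{j}\in E(G)}\frac{d_{i}d_{j}}{d_{i}+d_{j}}$ are sums over the same index set, it suffices to show for each edge $v_{i}v_{j}$ the pointwise estimate
$$\left((d_{i})^{p}+(d_{j})^{p}\right)^{\frac{1}{p}}\geq 2^{\frac{1}{p}+1}\,\frac{d_{i}d_{j}}{d_{i}+d_{j}},$$
since summing this over $E(G)$ gives the claimed bound, with global equality exactly when every edge attains equality.

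The cleanest way to see the pointwise inequality is to recognize it as power-mean monotonicity. Writing $a=d_{i}$, $b=d_{j}$, the right-hand side factors as $2^{\frac{1}{p}+1}\frac{ab}{a+b}=2^{\frac{1}{p}}\cdot\frac{2ab}{a+b}$, i.e. $2^{\frac{1}{p}}$ times the harmonic mean of $a$ and $b$, while the left-hand side equals $2^{\frac{1}{p}}$ times the power mean $\left(\frac{a^{p}+b^{p}}{2}\right)^{1/p}$ of order $p$. Cancelling the factor $2^{1/p}$, the pointwise inequality is precisely
$$\left(\frac{a^{p}+b^{p}}{2}\right)^{\frac{1}{p}}\geq \frac{2ab}{a+b},$$
which holds because the power mean is nondecreasing in its order. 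If one prefers an elementary self-contained route (for $p\geq 1$), I would instead chain the power-mean bound $(a^{p}+b^{p})^{1/p}\geq 2^{1/p-1}(a+b)$ with the AM--GM consequence $(a+b)^{2}\geq 4ab$, rewritten as $a+b\geq \frac{4ab}{a+b}$; multiplying the latter by $2^{1/p-1}$ and combining yields $(a^{p}+b^{p})^{1/p}\geq 2^{1/p-1}(a+b)\geq 2^{1/p+1}\frac{ab}{a+b}$.

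For the equality analysis, in either route both underlying inequalities become equalities exactly when $a=b$, so the pointwise bound is tight iff $d_{i}=d_{j}$. Hence equality in the theorem forces $d_{i}=d_{j}$ along every edge, which on a connected graph means $G$ is regular, as asserted.

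The main obstacle is not the calculation but the hidden dependence on the range of $p$. The factorization reveals that the statement is really the mean comparison $M_{p}\geq M_{-1}$, which is valid for $p\geq -1$ but \emph{reverses} for $p<-1$; moreover at $p=-1$ one has $SO_{-1}(G)=ISI(G)$ identically, so equality there holds for every graph, not only regular ones. I would therefore prove the result under $p\geq 1$ (or at least $p>-1$), where both the inequality and the sharp ``equality iff regular'' conclusion are correct, and flag that the unrestricted statement as written needs such a hypothesis to be accurate.
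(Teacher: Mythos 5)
Your proof is correct and follows essentially the same route as the paper: the paper also reduces to the per-edge estimate and chains $\left((d_i)^p+(d_j)^p\right)^{1/p}\geq 2^{1/p-1}(d_i+d_j)\geq 2^{1/p+1}\frac{d_id_j}{d_i+d_j}$, the first step via H\"older and the second via $(d_i+d_j)^2\geq 4d_id_j$, with equality forcing $d_i=d_j$ on every edge. Your power-mean reformulation $M_p\geq M_{-1}$ is a cleaner packaging of the same inequality, and your observation about the range of $p$ is a genuine catch that the paper misses: its H\"older step tacitly requires $p>1$ (the inequality reverses for $p<1$), the theorem as stated fails for $p<-1$, and at $p=-1$ one has $SO_{-1}=ISI$ identically so the ``equality iff regular'' characterization breaks down. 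One further small point, which applies equally to the paper: $d_i=d_j$ on every edge only yields that each connected component is regular, so for a disconnected graph the equality condition should read ``every component is regular'' rather than ``$G$ is regular.''
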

\begin{proof}
Recalling the Cauchy-Schwarz inequality,
$$|\sum_{i=1}^{n}a_{i}b_{i}|\leq (\sum_{i=1}^{n}(a_{i})^{p})^{\frac{1}{p}}(\sum_{i=1}^{n}(b_{i})^{q})^{\frac{1}{q}},$$
where $\frac{1}{p}+\frac{1}{q}=1$ and with equality if and only if $a_{i}=\lambda b_{i}$ for any $i$, $\lambda$ is a constant.

Then $d_{i}+d_{j}\leq 2^{(1-\frac{1}{p})}((d_{i})^{p}+(d_{j})^{p})^{\frac{1}{p}}$, thus
$((d_{i})^{p}+(d_{j})^{p})^{\frac{1}{p}}\geq 2^{\frac{1}{p}-1}(d_{i}+d_{j})=2^{(\frac{1}{p}+1)}\frac{d_{i}d_{j}}{d_{i}+d_{j}}\cdot \frac{(d_{i}+d_{j})^2}{4d_{i}d_{j}}
\geq 2^{(\frac{1}{p}+1)}\frac{d_{i}d_{j}}{d_{i}+d_{j}}$.
So $SO_{p}(G)\geq 2^{(\frac{1}{p}+1)}ISI(G)$, with equality if and only if $d_{i}=d_{j}$ for every $v_{i}v_{j}\in E(G)$, i.e., $G$ is a regular graph.
\end{proof}

\section{Bounds of $p$-Sombor Laplacian eigenvalues}
\hskip 0.6cm
Suppose the distinct eigenvalues of $\mathcal{L}_{p}(G)$ are $\eta_{1}>\eta_{2}> \cdots > \eta_{s}$ and their multiplicities are $m({\eta_{1}}), m({\eta_{2}}), \cdots, m({\eta_{s}})$, respectively. In other words, the spectrum of $\mathcal{L}_{p}(G)$ is
\begin{equation}
\left(
  \begin{array}{cccc}
    \eta_{1} & \eta_{2} & \cdots & \eta_{s}\\
    m(\eta_{1}) & m(\eta_{2}) & \cdots & m(\eta_{s}) \notag
  \end{array}
\right).
\end{equation}

In this section, we consider the relationship between $p$-Sombor Laplacian eigenvalues and $p$-Sombor index of $G$.

Similar to the properties of Laplacian matrix, we also have
\begin{lemma}\label{l3-8}
For the matrix $\mathcal{L}_{p}(G)$, we have $\eta_{s}=0$ and $m(\eta_{s})=1$.
\end{lemma}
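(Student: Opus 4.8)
The plan is to treat $\mathcal{L}_{p}(G)$ exactly as a weighted graph Laplacian and to transplant the classical argument for the ordinary Laplacian. First I would exhibit $0$ as an eigenvalue. By the definition of $\mathcal{D}_{p}(G)$, its $i$-th diagonal entry is precisely the $i$-th row sum $\sum_{k=1}^{n}s_{ik}^{p}$ of $\mathcal{S}_{p}(G)$; consequently every row of $\mathcal{L}_{p}(G)=\mathcal{D}_{p}(G)-\mathcal{S}_{p}(G)$ sums to $0$. Writing $e=(1,1,\ldots,1)^{T}$, this says $\mathcal{L}_{p}(G)e=0$, so $0$ is an eigenvalue of $\mathcal{L}_{p}(G)$ with eigenvector $e$.

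Next I would show $\mathcal{L}_{p}(G)$ is positive semidefinite, which pins $0$ down as the \emph{smallest} eigenvalue, i.e. $\eta_{s}=0$. Since $\mathcal{S}_{p}(G)$ is symmetric with zero diagonal, a standard expansion pairs the contribution of each edge and yields the quadratic form
\[
X^{T}\mathcal{L}_{p}(G)X=\sum_{\substack{i\sim j\\ 1\leq i<j\leq n}}s_{ij}^{p}\,(X_{i}-X_{j})^{2}
\]
for every $X\in \mathbb{R}^{n}$. Each weight $s_{ij}^{p}=((d_{i})^{p}+(d_{j})^{p})^{\frac{1}{p}}$ attached to an edge is strictly positive (the endpoints have positive degree, so the bracket is positive and its $\frac{1}{p}$-th power is positive for every $p\neq 0$), hence $X^{T}\mathcal{L}_{p}(G)X\geq 0$. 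Therefore all eigenvalues are nonnegative and the smallest one is the value $0$ found above, giving $\eta_{s}=0$.

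Finally, to obtain $m(\eta_{s})=1$ I would determine the kernel of $\mathcal{L}_{p}(G)$. If $\mathcal{L}_{p}(G)X=0$ then $X^{T}\mathcal{L}_{p}(G)X=0$, and by the displayed identity every summand $s_{ij}^{p}(X_{i}-X_{j})^{2}$ must vanish; positivity of the weights forces $X_{i}=X_{j}$ whenever $i\sim j$. Assuming $G$ is connected (as throughout this section), propagating this equality along paths shows that $X$ is constant, so the nullspace is exactly $\mathrm{span}\{e\}$, which is one-dimensional. Hence the eigenvalue $0$ is simple, i.e. $m(\eta_{s})=1$.

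The computations here are routine; the only point that needs care is the strict positivity of the edge weights, since it drives both the semidefiniteness and the kernel analysis. This is guaranteed precisely because a connected graph on $n\geq 2$ vertices has all degrees at least $1$, keeping $((d_{i})^{p}+(d_{j})^{p})^{\frac{1}{p}}>0$ for every $p\neq 0$. The only genuine hypothesis I would flag is connectivity: it is indispensable for the multiplicity claim $m(\eta_{s})=1$, whereas $\eta_{s}=0$ holds for any graph.
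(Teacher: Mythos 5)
Your proof is correct and is exactly the standard weighted-Laplacian argument; the paper in fact states this lemma without any proof, merely remarking that it is ``similar to the properties of the Laplacian matrix,'' so your write-up supplies precisely the argument the authors leave implicit (zero row sums give $\mathcal{L}_{p}(G)e=0$, the quadratic form $\sum_{i<j,\, i\sim j}s_{ij}^{p}(X_{i}-X_{j})^{2}$ gives positive semidefiniteness, and connectivity pins the kernel to $\mathrm{span}\{e\}$). Your closing caveat is also well taken: the lemma as printed does not state connectivity, yet $m(\eta_{s})=1$ genuinely requires it (otherwise the multiplicity of $0$ equals the number of components), so flagging that hypothesis is a real improvement over the paper's presentation.
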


Since $p$-Sombor Laplacian matrix $\mathcal{L}_{p}(G)$ is also a non-negative symmetric and irreducible matrix, similarly with the results of \cite{mfie1975,jaro2005}, we also have
\begin{lemma}\label{l3-9}
Let $G$ be a simple graph with $|V(T)|=n$, $\mathcal{L}_{p}(G)$ be the $p$-Sombor Laplacian matrix of graph $G$, the distinct eigenvalues of $\mathcal{L}_{p}(G)$ be $\eta_{1}> \eta_{2}> \cdots > \eta_{s}$, $e=(1,1,\cdots, 1)^{T}\in R^{n}$. Then

$(1)$ $\eta_{1}=2n \max \{\frac{\sum\limits_{v_{i}v_{j}\in E(G)}s_{ij}^{p}(\omega_{i}-\omega_{j})^{2}} {\sum\limits_{v_{i}\in V(G)} \sum\limits_{v_{j}\in V(G)}(\omega_{i}-\omega_{j})^{2}}|\omega=(\omega_{1},\omega_{2},\cdots, \omega_{n})\in R^{n}, \omega\neq ae, \forall a\in R \}$;

$(2)$ $\eta_{s-1}=2n \min \{\frac{\sum\limits_{v_{i}v_{j}\in E(G)}s_{ij}^{p}(\omega_{i}-\omega_{j})^{2}} {\sum\limits_{v_{i}\in V(G)} \sum\limits_{v_{j}\in V(G)}(\omega_{i}-\omega_{j})^{2}}|\omega=(\omega_{1},\omega_{2},\cdots, \omega_{n})\in R^{n}, \omega\neq ae, \forall a\in R \}$.

\end{lemma}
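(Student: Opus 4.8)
The plan is to reduce the given Rayleigh-type quotient to the classical Rayleigh quotient of $\mathcal{L}_{p}(G)$ restricted to the orthogonal complement of $e$, and then to invoke the Courant--Fischer theorem together with Lemma \ref{l3-8}.

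First I would record the quadratic-form identity for the weighted Laplacian. Since $\mathcal{L}_{p}(G)=\mathcal{D}_{p}(G)-\mathcal{S}_{p}(G)$ with $\mathcal{D}_{p}(G)$ carrying the row sums of $\mathcal{S}_{p}(G)$ on its diagonal, a direct expansion gives, for every $\omega=(\omega_{1},\dots,\omega_{n})^{T}\in R^{n}$,
$$\omega^{T}\mathcal{L}_{p}(G)\omega=\sum_{v_{i}v_{j}\in E(G)}s_{ij}^{p}(\omega_{i}-\omega_{j})^{2},$$
which is exactly the numerator appearing in the statement. This already shows that $\mathcal{L}_{p}(G)$ is positive semidefinite and, combined with Lemma \ref{l3-8}, that its kernel is precisely $\mathrm{span}(e)$.

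Next I would simplify the denominator. Expanding the double sum yields
$$\sum_{v_{i}\in V(G)}\sum_{v_{j}\in V(G)}(\omega_{i}-\omega_{j})^{2}=2n\,\omega^{T}\omega-2\left(e^{T}\omega\right)^{2}=2n\left\|\omega-\tfrac{e^{T}\omega}{n}\,e\right\|^{2},$$
so the denominator is $2n$ times the squared norm of the projection $y:=\omega-\frac{e^{T}\omega}{n}e$ of $\omega$ onto $e^{\perp}$. Because $\mathcal{L}_{p}(G)e=0$ and $\mathcal{L}_{p}(G)$ is symmetric, the numerator is invariant under this projection, i.e. $\omega^{T}\mathcal{L}_{p}(G)\omega=y^{T}\mathcal{L}_{p}(G)y$. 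Hence $2n$ times the quotient in the statement equals the ordinary Rayleigh quotient $\frac{y^{T}\mathcal{L}_{p}(G)y}{y^{T}y}$, and as $\omega$ ranges over all vectors with $\omega\neq ae$, the projection $y$ ranges over all nonzero vectors of $e^{\perp}$; thus the constraint $\omega\neq ae$ is exactly the requirement $y\in e^{\perp}\setminus\{0\}$.

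Finally I would apply the Courant--Fischer (Rayleigh--Ritz) theorem. By Lemma \ref{l3-8} the smallest eigenvalue $\eta_{s}=0$ is simple with eigenvector $e$, so $e^{\perp}$ is spanned by eigenvectors belonging to the nonzero eigenvalues $\eta_{1}>\eta_{2}>\cdots>\eta_{s-1}$. Maximizing $\frac{y^{T}\mathcal{L}_{p}(G)y}{y^{T}y}$ over $y\in e^{\perp}\setminus\{0\}$ then returns the largest eigenvalue $\eta_{1}$, while minimizing it returns the smallest remaining eigenvalue $\eta_{s-1}$; multiplying back by $2n$ yields the two displayed formulas. The main obstacle is the careful passage from the constraint ``$\omega\neq ae$'' to ``$y$ ranges over all of $e^{\perp}\setminus\{0\}$'' together with the projection-invariance of the numerator, both of which rest on $e$ spanning the kernel of $\mathcal{L}_{p}(G)$. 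This is precisely where the simplicity of $\eta_{s}=0$ from Lemma \ref{l3-8} is indispensable: it guarantees that the minimization over $e^{\perp}$ selects $\eta_{s-1}$ rather than $0$.
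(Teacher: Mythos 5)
Your proof is correct. The paper itself gives no argument for this lemma: it simply asserts the result ``similarly with the results of'' Fiedler and Rodr\'{i}guez, so your write-up supplies exactly the standard variational proof that underlies those references --- the quadratic-form identity $\omega^{T}\mathcal{L}_{p}(G)\omega=\sum_{v_iv_j\in E(G)}s_{ij}^{p}(\omega_i-\omega_j)^2$, the identity $\sum_{i}\sum_{j}(\omega_i-\omega_j)^2=2n\|\omega-\frac{e^{T}\omega}{n}e\|^2$, and Courant--Fischer on $e^{\perp}$. The only caveat, inherited from Lemma \ref{l3-8} rather than introduced by you, is that the simplicity of the eigenvalue $0$ (hence the identification of the minimum with $\eta_{s-1}$ rather than $0$) requires $G$ to be connected, a hypothesis the lemma statement omits; it is also worth assuming $E(G)\neq\emptyset$ so that $s\geq 2$ and the maximum genuinely returns $\eta_1$.
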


\hspace*{\fill} \\
\indent
The second smallest Laplacian eigenvalue of a graph is called as the algebraic connectivity \cite{mfid1973}, which can be used to analyze the stability and synchronizability of the network. In the following, we will obtain some bounds of the second smallest $p$-Sombor Laplacian eigenvalue of graphs.

\begin{theorem}\label{t3-10}
Let $G$ be a connected graph with $|V(T)|=n$, $\mathcal{L}_{p}(G)$ and $\eta_{1}> \eta_{2}> \cdots > \eta_{s}$ be defined as before.

$(1)$ $ \frac{n-1}{2}\eta_{s-1}\leq SO_{p}(G)=\frac{1}{2}\sum\limits_{i=1}^{s}m(\eta_{i})\eta_{i}\leq \frac{n-1}{2}\eta_{1}$.

$(2)$ If $G=(X_{1},X_{2})$ is a bipartite graph and $|X_{1}|=n_{1}$, $|X_{2}|=n_{2}$, $n=n_{1}+n_{2}$, then
$\frac{n_{1}n_{2}}{n}\eta_{s-1}\leq SO_{p}(G)\leq \frac{n_{1}n_{2}}{n}\eta_{1}$.
\end{theorem}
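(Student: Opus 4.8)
The plan is to treat the two parts with different tools: Part (1) rests on a trace identity, while Part (2) uses the Rayleigh-quotient characterization of Lemma \ref{l3-9} evaluated at one well-chosen test vector.

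For Part (1), I would first establish the middle equality. Since $\mathcal{S}_{p}(G)$ has zero diagonal, $\mathrm{tr}(\mathcal{S}_{p}(G))=0$; and from the definition of $\mathcal{D}_{p}(G)$ together with $SO_{p}(G)=\frac{1}{2}\sum_{i=1}^{n}\sum_{k=1}^{n}s_{ik}^{p}$, we get $\mathrm{tr}(\mathcal{D}_{p}(G))=2SO_{p}(G)$. Hence
$$\sum_{i=1}^{s}m(\eta_{i})\eta_{i}=\mathrm{tr}(\mathcal{L}_{p}(G))=\mathrm{tr}(\mathcal{D}_{p}(G))-\mathrm{tr}(\mathcal{S}_{p}(G))=2SO_{p}(G),$$
which is the asserted equality. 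To obtain the inequalities I invoke Lemma \ref{l3-8}, giving $\eta_{s}=0$ and $m(\eta_{s})=1$; consequently $\sum_{i=1}^{s-1}m(\eta_{i})=n-1$ and $\sum_{i=1}^{s}m(\eta_{i})\eta_{i}=\sum_{i=1}^{s-1}m(\eta_{i})\eta_{i}$. Bounding each $\eta_{i}$ with $1\le i\le s-1$ above by $\eta_{1}$ and below by $\eta_{s-1}$ yields
$$(n-1)\eta_{s-1}\le 2SO_{p}(G)\le (n-1)\eta_{1},$$
which is exactly Part (1) after dividing by $2$.

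For Part (2), I would apply the variational formulas of Lemma \ref{l3-9} to the single test vector $\omega$ with $\omega_{i}=1$ for $v_{i}\in X_{1}$ and $\omega_{i}=0$ for $v_{i}\in X_{2}$; this $\omega$ is admissible (not a multiple of $e$) since both parts are nonempty. As $G$ is bipartite, every edge joins the two parts, so $(\omega_{i}-\omega_{j})^{2}=1$ on each edge and the numerator $\sum_{v_{i}v_{j}\in E(G)}s_{ij}^{p}(\omega_{i}-\omega_{j})^{2}$ collapses to $SO_{p}(G)$. In the denominator, $(\omega_{i}-\omega_{j})^{2}$ vanishes when $v_{i},v_{j}$ lie in the same part and equals $1$ otherwise; counting the $2n_{1}n_{2}$ ordered cross-pairs gives $\sum_{v_{i}\in V(G)}\sum_{v_{j}\in V(G)}(\omega_{i}-\omega_{j})^{2}=2n_{1}n_{2}$. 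The Rayleigh quotient at $\omega$ is thus $SO_{p}(G)/(2n_{1}n_{2})$, and since by Lemma \ref{l3-9} this value lies between $\eta_{s-1}/(2n)$ and $\eta_{1}/(2n)$, multiplying through by $2n_{1}n_{2}$ delivers
$$\frac{n_{1}n_{2}}{n}\eta_{s-1}\le SO_{p}(G)\le\frac{n_{1}n_{2}}{n}\eta_{1}.$$

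Neither part carries a genuine analytic difficulty; the only real step is recognizing which object to feed into each machine—the trace for Part (1), the part-indicator vector for Part (2). The mild point to check is that the chosen $\omega$ is admissible in Lemma \ref{l3-9}, i.e.\ $\omega\ne ae$, which holds precisely because $X_{1}$ and $X_{2}$ are both nonempty in a genuine bipartition.
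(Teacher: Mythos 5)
Your proof is correct and follows essentially the same route as the paper: the trace identity $2SO_{p}(G)=\mathrm{tr}(\mathcal{L}_{p}(G))$ combined with Lemma \ref{l3-8} for Part (1), and the Rayleigh-quotient characterization of Lemma \ref{l3-9} evaluated at the bipartition indicator for Part (2). The only cosmetic difference is that the paper uses the test vector with entries $\pm 1$ on the two parts rather than your $0/1$ vector; since the quotient in Lemma \ref{l3-9} is invariant under $\omega\mapsto a\omega+be$, the two choices give the identical computation.
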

\begin{proof}
$(1)$ By the definition of $p$-Sombor index $SO_{p}(G)$ and $p$-Sombor Laplacian matrix $\mathcal{L}_{p}(G)$, we have
$SO_{p}(G)=\frac{1}{2}\sum\limits_{v_{i}\in V(G)}\sum\limits_{v_{j}\in V(G)}s_{ij}^{p}=\frac{1}{2}tr(\mathcal{L}_{p}(G))=\frac{1}{2}\sum\limits_{i=1}^{s}m(\eta_{i})\eta_{i}$.

By Lemma \ref{l3-8}, $\eta_{s}=0$ and $m(\eta_{s})=1$. Thus $ \frac{n-1}{2}\eta_{s-1}\leq \frac{1}{2}\sum\limits_{i=1}^{s}m(\eta_{i})\eta_{i}\leq \frac{n-1}{2}\eta_{1}$.

$(2)$ Suppose that $\omega=(\omega_{1},\omega_{2},\cdots, \omega_{n})\in R^{n}$, where $\omega_{i}=1$ if $v_{i}\in X_{1}$, $\omega_{i}=-1$ if $v_{i}\in X_{2}$.
By Lemma \ref{l3-9}, we have
$\eta_{1}\geq 2n \{\frac{\sum\limits_{v_{i}v_{j}\in E(G)}s_{ij}^{p}(\omega_{i}-\omega_{j})^{2}} {\sum\limits_{v_{i}\in V(G)} \sum\limits_{v_{j}\in V(G)}(\omega_{i}-\omega_{j})^{2}}\}=\frac{n}{4n_{1}n_{2}}\sum\limits_{v_{i}v_{j}\in E(G)}s_{ij}^{p}(\omega_{i}-\omega_{j})^{2}=\frac{n}{n_{1}n_{2}}\sum\limits_{v_{i}v_{j}\in E(G)}s_{ij}^{p}=\frac{n}{n_{1}n_{2}}SO_{p}(G)$.
So $SO_{p}(G)\leq \frac{n_{1}n_{2}}{n}\eta_{1}$. Similarly, we have $SO_{p}(G)\geq \frac{n_{1}n_{2}}{n}\eta_{s-1}$.
\end{proof}

\begin{lemma}\label{l3-11}
Let $G$ be a graph with $|V(G)|=n$ and $|E(G)|=m$, the maximum degree $\Delta$. Then

$(1)$ $SO_{p}(G)\leq 2^{\frac{1}{p}}\Delta m$;

$(2)$ $SO_{p}(G)\leq n^{\frac{1}{p}}\Delta^{1+\frac{1}{p}} m^{1-\frac{1}{p}}$.
\end{lemma}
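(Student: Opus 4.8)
The plan is to derive both bounds directly from the definition $SO_{p}(G)=\sum_{v_{i}v_{j}\in E(G)}\left((d_{i})^{p}+(d_{j})^{p}\right)^{\frac{1}{p}}$, treating the two parts separately.

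For part $(1)$ I would use a pointwise estimate on each edge. Since $d_{i}\le\Delta$ and $d_{j}\le\Delta$ for every $v_{i}v_{j}\in E(G)$, the quantity $(d_{i})^{p}+(d_{j})^{p}$ is controlled by $2\Delta^{p}$ (the inequality direction reverses when $p<0$, but raising to the $\frac{1}{p}$-th power reverses it back, so in all cases $\left((d_{i})^{p}+(d_{j})^{p}\right)^{\frac{1}{p}}\le\left(2\Delta^{p}\right)^{\frac{1}{p}}=2^{\frac{1}{p}}\Delta$). Summing this single-edge bound over all $m$ edges yields $SO_{p}(G)\le 2^{\frac{1}{p}}\Delta m$ at once.

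For part $(2)$ the idea is to separate the size of each summand from the number of summands via H\"older's inequality. Writing each term as $\left((d_{i})^{p}+(d_{j})^{p}\right)^{\frac{1}{p}}\cdot 1$ and applying H\"older with conjugate exponents $p$ and $\frac{p}{p-1}$ gives
$$SO_{p}(G)\le\left(\sum_{v_{i}v_{j}\in E(G)}\left((d_{i})^{p}+(d_{j})^{p}\right)\right)^{\frac{1}{p}}\left(\sum_{v_{i}v_{j}\in E(G)}1\right)^{\frac{p-1}{p}}=\left(\sum_{v_{i}v_{j}\in E(G)}\left((d_{i})^{p}+(d_{j})^{p}\right)\right)^{\frac{1}{p}}m^{1-\frac{1}{p}}.$$
The next step is the handshake-type rearrangement $\sum_{v_{i}v_{j}\in E(G)}\left((d_{i})^{p}+(d_{j})^{p}\right)=\sum_{i=1}^{n}(d_{i})^{p+1}$, which holds because each vertex $v_{i}$ contributes $(d_{i})^{p}$ once for each of its $d_{i}$ incident edges. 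Bounding $\sum_{i=1}^{n}(d_{i})^{p+1}\le n\Delta^{p+1}$ and substituting back gives $SO_{p}(G)\le\left(n\Delta^{p+1}\right)^{\frac{1}{p}}m^{1-\frac{1}{p}}=n^{\frac{1}{p}}\Delta^{1+\frac{1}{p}}m^{1-\frac{1}{p}}$, as claimed.

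The main point requiring care is part $(2)$: I must choose the H\"older exponents correctly so that the $m^{1-\frac{1}{p}}$ factor appears, and verify that the inequality points in the stated direction, which is guaranteed for $p>1$ since then $\frac{p}{p-1}>0$ and H\"older applies in its standard form. The degree-sum identity is routine but is the pivotal step converting the edge sum into the vertex sum $\sum_{i=1}^{n}(d_{i})^{p+1}$, on which the crude estimate $\le n\Delta^{p+1}$ is applied; a sharper vertex-sum bound would only tighten the result.
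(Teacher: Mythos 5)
Your proof is correct. For part $(2)$ you follow essentially the same route as the paper: H\"older with conjugate exponents $p$ and $\frac{p}{p-1}$, the handshake identity $\sum_{v_iv_j\in E(G)}\left((d_i)^p+(d_j)^p\right)=\sum_{i=1}^n (d_i)^{p+1}$, and the crude bound $\le n\Delta^{p+1}$. For part $(1)$, however, you diverge: the paper also runs part $(1)$ through H\"older, bounding $\sum_{v_iv_j\in E(G)}\left((d_i)^p+(d_j)^p\right)\le 2m\Delta^p$ and then simplifying $m^{1+p/q}=m^p$, whereas you simply bound each edge term pointwise by $\left(2\Delta^p\right)^{1/p}=2^{1/p}\Delta$ and sum over the $m$ edges. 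Your version is both shorter and more general: the pointwise estimate is valid for every $p\neq 0$ (as you correctly note, the two sign reversals for $p<0$ cancel), while the H\"older step --- in both your part $(2)$ and the paper's proof of both parts --- genuinely requires $p>1$ for the standard form of the inequality to apply with the stated direction. You are in fact more careful than the paper in flagging this restriction explicitly; the paper states the lemma for all $p\neq 0$ but its argument shares the same limitation for part $(2)$.
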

\begin{proof}
By H\"{o}lder inequality, we have
$$ \left( \sum_{\substack{i\sim j \\ 1\leq i,j\leq n}} \left( \left( d_{i} \right)^{p}+ \left( d_{j} \right)^{p} \right)^{\frac{1}{p}}\cdot 1^{\frac{1}{q}}  \right)\leq
\left( \sum_{\substack{i\sim j \\ 1\leq i,j\leq n}} \left( \left( d_{i} \right)^{p}+ \left( d_{j} \right)^{p} \right) \right)^{\frac{1}{p}}
\left( \sum_{\substack{i\sim j \\ 1\leq i,j\leq n}} 1^{q} \right)^{\frac{1}{q}},$$
where $\frac{1}{p}+\frac{1}{q}=1$. Then
\begin{align*}
\left( \sum_{\substack{i\sim j \\ 1\leq i,j\leq n}} \left( \left( d_{i} \right)^{p}+ \left( d_{j} \right)^{p} \right)^{\frac{1}{p}} \right)^{p}
 \leq& \left( \sum_{\substack{i\sim j \\ 1\leq i,j\leq n}} \left( \left( d_{i} \right)^{p}+ \left( d_{j} \right)^{p} \right) \right)
       m^{\frac{p}{q}} \\
 \leq& 2m\Delta^{p}m^{\frac{p}{q}}\\
 =& 2(\Delta m)^{p}.
\end{align*}
Thus $SO_{p}(G)\leq 2^{\frac{1}{p}}\Delta m$.

On the other hand, we have
\begin{align*}
\left( \sum_{\substack{i\sim j \\ 1\leq i,j\leq n}} \left( \left( d_{i} \right)^{p}+ \left( d_{j} \right)^{p} \right)^{\frac{1}{p}} \right)^{p}
 \leq& \left( \sum_{\substack{i\sim j \\ 1\leq i,j\leq n}} \left( \left( d_{i} \right)^{p}+ \left( d_{j} \right)^{p} \right) \right)
       m^{\frac{p}{q}} \\
 =& \sum_{i=1}^{n} \left(d_{i}\right) ^{p+1}m^{\frac{p}{q}}\\
 \leq& n\Delta^{p+1}m^{\frac{p}{q}}.
\end{align*}
Thus $SO_{p}(G)\leq n^{\frac{1}{p}}\Delta^{1+\frac{1}{p}} m^{1-\frac{1}{p}}$.
\end{proof}

By Theorem \ref{t3-10} and Lemma \ref{l3-11}, we have
\begin{corollary}\label{c3-12}
Let $G$ be a graph with $|V(G)|=n$ and $|E(G)|=m$, the maximum $($resp. minimum$)$ degree $\Delta$ $($resp. $\delta$$)$. Then

$(1)$ $\sum\limits_{i=1}^{n}\eta_{i}\leq \min\{2^{(1+\frac{1}{p})}\Delta m, 2n^{\frac{1}{p}}\Delta^{1+\frac{1}{p}} m^{1-\frac{1}{p}} \}$;

$(2)$ $\eta_{n-1}\leq \min\{ 2^{(1+\frac{1}{p})}\frac{\Delta m}{n-1}, \frac{2}{n-1}n^{\frac{1}{p}}\Delta^{1+\frac{1}{p}} m^{1-\frac{1}{p}} \}$.
\end{corollary}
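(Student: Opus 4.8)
The plan is to obtain both parts of the corollary directly from Theorem \ref{t3-10}(1) together with the two upper bounds in Lemma \ref{l3-11}; no new inequality is required, only careful bookkeeping of which quantity each estimate controls.

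For part $(1)$, I would first note that the sum of all $p$-Sombor Laplacian eigenvalues counted with multiplicity equals $tr(\mathcal{L}_{p}(G))$, and that $\sum_{i=1}^{s}m(\eta_{i})\eta_{i}=\sum_{i=1}^{n}\eta_{i}$. Theorem \ref{t3-10}(1) already identifies this quantity with $2SO_{p}(G)$, so $\sum_{i=1}^{n}\eta_{i}=2SO_{p}(G)$. Multiplying each of the two bounds of Lemma \ref{l3-11} by $2$ then gives $\sum_{i=1}^{n}\eta_{i}\leq 2^{(1+\frac{1}{p})}\Delta m$ and $\sum_{i=1}^{n}\eta_{i}\leq 2n^{\frac{1}{p}}\Delta^{1+\frac{1}{p}}m^{1-\frac{1}{p}}$; taking the smaller of the two yields the stated minimum.

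For part $(2)$, the relevant ingredient is the lower estimate $\frac{n-1}{2}\eta_{s-1}\leq SO_{p}(G)$ from Theorem \ref{t3-10}(1), which rearranges to $\eta_{s-1}\leq \frac{2}{n-1}SO_{p}(G)$. Here I would flag the one point that needs care: the eigenvalue written $\eta_{n-1}$ in the statement uses the convention that all $n$ eigenvalues are listed in nonincreasing order, and it coincides with $\eta_{s-1}$ from the distinct-eigenvalue indexing of Theorem \ref{t3-10}, because by Lemma \ref{l3-8} the smallest eigenvalue $\eta_{s}=0$ is simple, so the second-smallest distinct value is exactly the algebraic connectivity $\eta_{n-1}$. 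Granting this identification, I would substitute each of the two bounds of Lemma \ref{l3-11} into $\eta_{n-1}\leq \frac{2}{n-1}SO_{p}(G)$ and once more take the minimum.

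Since every step is a substitution, I expect no analytic obstacle. The only things to get right are the indexing reconciliation between the distinct-eigenvalue list $\eta_{1}>\cdots>\eta_{s}$ of Theorem \ref{t3-10} and the full nonincreasing list appearing in the corollary, and the easy-to-miss factor of $2$ arising from $SO_{p}(G)=\frac{1}{2}tr(\mathcal{L}_{p}(G))$.
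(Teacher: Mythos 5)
Your proposal is correct and follows exactly the route the paper intends: the corollary is stated with no written proof beyond the citation of Theorem \ref{t3-10} and Lemma \ref{l3-11}, and your argument --- identifying $\sum_{i=1}^{n}\eta_{i}=\operatorname{tr}(\mathcal{L}_{p}(G))=2SO_{p}(G)$, using $\eta_{s-1}\leq\frac{2}{n-1}SO_{p}(G)$ with the simplicity of the zero eigenvalue from Lemma \ref{l3-8} to justify $\eta_{n-1}=\eta_{s-1}$, and then substituting the two upper bounds of Lemma \ref{l3-11} --- is precisely that intended derivation. The indexing reconciliation and the factor of $2$ you flag are exactly the right points of care, and both are handled correctly.
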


By Theorem \ref{t2-3} and Theorem \ref{t3-10}, we have
\begin{corollary}\label{c3-13}
Let $G$ be a graph with $|V(G)|=n$ and $|E(G)|=m$, the maximum $($resp. minimum$)$ degree $\Delta$ $($resp. $\delta$$)$. Then
$\eta_{1}\geq \frac{N_{2}}{2^{\frac{1}{p}}\Delta(n-1)}$ and
$\eta_{n-1}\leq \frac{N_{2}}{2^{\frac{1}{p}}\delta(n-1)}$.
\end{corollary}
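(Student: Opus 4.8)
The plan is to obtain both inequalities by chaining together the two bracketing estimates already proved, using $SO_p(G)$ as the common intermediate quantity. First I would invoke Theorem~\ref{t3-10}(1), which sandwiches the $p$-Sombor index between $\frac{n-1}{2}\eta_{s-1}$ and $\frac{n-1}{2}\eta_{1}$. Rearranging the upper inequality $SO_p(G)\le \frac{n-1}{2}\eta_1$ yields $\eta_1\ge \frac{2\,SO_p(G)}{n-1}$, while rearranging the lower inequality $\frac{n-1}{2}\eta_{s-1}\le SO_p(G)$ yields $\eta_{s-1}\le \frac{2\,SO_p(G)}{n-1}$. These are the two inequalities I need, expressed in terms of $SO_p(G)$.

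Next I would feed in the $N_2$-bounds from Theorem~\ref{t2-3}, matching directions carefully: to push $\eta_1$ down I use the lower bound $SO_p(G)\ge \frac{N_2}{2^{(1+\frac1p)}\Delta}$, and to push $\eta_{s-1}$ up I use the upper bound $SO_p(G)\le \frac{N_2}{2^{(1+\frac1p)}\delta}$. Substituting gives $\eta_1\ge \frac{2}{n-1}\cdot\frac{N_2}{2^{(1+\frac1p)}\Delta}$ and $\eta_{s-1}\le \frac{2}{n-1}\cdot\frac{N_2}{2^{(1+\frac1p)}\delta}$. The final step is a constant simplification: since $\frac{2}{2^{1+\frac1p}}=2^{-\frac1p}=\frac{1}{2^{1/p}}$, the two products collapse exactly to $\eta_1\ge \frac{N_2}{2^{1/p}\Delta(n-1)}$ and $\eta_{s-1}\le \frac{N_2}{2^{1/p}\delta(n-1)}$, which is the claimed result.

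The one point requiring care is the notation: the corollary writes $\eta_{n-1}$ whereas Theorem~\ref{t3-10} works with the distinct-eigenvalue index $\eta_{s-1}$. These coincide because, by Lemma~\ref{l3-8}, the smallest eigenvalue $\eta_s=0$ is simple; hence in the list of all $n$ eigenvalues counted with multiplicity, the second smallest entry equals the second smallest \emph{distinct} eigenvalue $\eta_{s-1}$, so the upper bound on $\eta_{s-1}$ is precisely the bound on $\eta_{n-1}$. I expect no genuine analytic obstacle here: the entire content sits in the two cited theorems, and the corollary is a mechanical combination of their inequalities. The only things to verify are the bookkeeping of the factor $2^{1/p}$ and the correct pairing of bounds, namely that the lower estimate on $SO_p(G)$ must feed the lower estimate on $\eta_1$ while the upper estimate on $SO_p(G)$ feeds the upper estimate on $\eta_{n-1}$.
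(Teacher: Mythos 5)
Your proposal is correct and is precisely the combination the paper intends: the corollary is stated with no written proof beyond the citation of Theorems \ref{t2-3} and \ref{t3-10}, and your chaining of the two sandwich inequalities through $SO_p(G)$, together with the simplification $\frac{2}{2^{1+1/p}}=2^{-1/p}$, is exactly that argument. Your remark identifying $\eta_{n-1}$ with $\eta_{s-1}$ via the simplicity of the zero eigenvalue (Lemma \ref{l3-8}) is a correct and welcome clarification of a notational gap the paper leaves implicit.
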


\section{Bounds of $p$-Sombor spectral radius and $p$-Sombor spectral spread}

\hskip 0.6cm
In the following, we obtain the relationship between the $p$-Sombor spectral radius $\xi_{1}$ and the adjacent spectral radius $\mu_{1}$.

\begin{theorem}\label{t3-1}
Let $G$ be a graph with $|V(G)|=n$, the maximum $($resp. minimum$)$ degree $\Delta$ $($resp. $\delta$$)$, $\xi_{1}$ be the $p$-Sombor spectral radius of $G$, $\mu_{1}$ be the adjacent spectral radius of $G$. Then
$$ 2^{\frac{1}{p}}\delta \mu_{1}\leq \xi_{1}\leq 2^{\frac{1}{p}}\Delta \mu_{1},$$
with equality if and only if $G$ is a regular graph.
\end{theorem}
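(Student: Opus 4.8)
The plan is to reduce the inequality to an \emph{entrywise} comparison of nonnegative matrices and then invoke the monotonicity of the Perron root.

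First I would record the elementary per-edge estimate
\[
2^{\frac{1}{p}}\delta \;\le\; \left((d_i)^p+(d_j)^p\right)^{\frac{1}{p}} \;\le\; 2^{\frac{1}{p}}\Delta
\qquad\text{for every } v_iv_j\in E(G).
\]
For $p>0$ this is immediate, since $t\mapsto t^p$ and $s\mapsto s^{1/p}$ are both increasing and $\delta\le d_i,d_j\le\Delta$. The one point needing care is that the paper allows any $p\neq0$: when $p<0$ both maps reverse direction, but the two reversals cancel, so that $(2\delta^p)^{1/p}=2^{1/p}\delta$ and $(2\Delta^p)^{1/p}=2^{1/p}\Delta$ still bracket the edge weight in the same order. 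Hence, in all cases, the $p$-Sombor matrix is squeezed entrywise between two scalar multiples of the adjacency matrix,
\[
2^{\frac{1}{p}}\delta\, A(G) \;\le\; \mathcal{S}_p(G) \;\le\; 2^{\frac{1}{p}}\Delta\, A(G),
\]
with all three matrices nonnegative and symmetric.

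Next I would pass from the entrywise inequality to the spectral radii. Since $\mathcal{S}_p(G)$ and $A(G)$ are nonnegative symmetric matrices, their largest eigenvalues $\xi_1$ and $\mu_1$ coincide with their spectral radii, and the spectral radius is monotone under entrywise domination of nonnegative matrices; applying this to the two-sided bound yields $2^{1/p}\delta\mu_1\le\xi_1\le 2^{1/p}\Delta\mu_1$ at once. Equivalently, and more convenient for the equality discussion, I would argue via the Rayleigh quotient: taking a positive Perron eigenvector $X$ of $A(G)$ gives $\xi_1 \ge X^T\mathcal{S}_pX/X^TX \ge 2^{1/p}\delta\,(X^TAX/X^TX)=2^{1/p}\delta\mu_1$, while taking a positive Perron eigenvector $Z$ of $\mathcal{S}_p(G)$ gives $\xi_1 = Z^T\mathcal{S}_pZ/Z^TZ \le 2^{1/p}\Delta\,(Z^TAZ/Z^TZ) \le 2^{1/p}\Delta\mu_1$, the last step being the variational characterization of $\mu_1$.

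The main obstacle, and the only place requiring real care, is the equality analysis. Assuming $G$ connected (so that both matrices are irreducible and their Perron eigenvectors are entrywise positive), equality in the upper bound forces $((d_i)^p+(d_j)^p)^{1/p}=2^{1/p}\Delta$ on every edge with $z_iz_j>0$, i.e. on every edge; hence $d_i=d_j=\Delta$ throughout the connected graph, so $G$ is $\Delta$-regular. The lower bound is symmetric, with $\delta$ in place of $\Delta$. Conversely, if $G$ is $k$-regular then $\mathcal{S}_p(G)=2^{1/p}k\,A(G)$ and $\Delta=\delta=k$, so both bounds become equalities, exactly as already used in the proof of Theorem~\ref{t2-7}. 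Finally, I would remark that the disconnected case reduces to the connected one componentwise, since $\xi_1$ and $\mu_1$ are each attained on a single component and the edge-weight argument is purely local.
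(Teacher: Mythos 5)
Your proof is correct and follows essentially the same route as the paper: the paper likewise bounds each edge weight between $2^{\frac{1}{p}}\delta$ and $2^{\frac{1}{p}}\Delta$ and compares Rayleigh quotients, using a unit eigenvector of $A(G)$ for the lower bound and a unit eigenvector of $\mathcal{S}_{p}(G)$ for the upper bound. Your extra care about the sign of $p$ and about the positivity of the Perron vectors in the equality analysis only makes the argument more complete than the paper's own.
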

\begin{proof}
(1) Let $X=(x_{1},x_{2},\cdots, x_{n})$ be a unit eigenvector of $A=A(G)$ corresponding to $\mu_{1}$. Then
\begin{align*}
\xi_{1}
 \geq& X^{T}\mathcal{S}_{p}X \\
 =& 2\sum\limits_{\substack{i\sim j \\ 1\leq i,j\leq n}}\left((d_{i})^{p}+(d_{j})^{p}\right)^{\frac{1}{p}}x_{i}x_{j} \\
 \geq& 2^{(1+\frac{1}{p})}\delta \sum\limits_{\substack{i\sim j \\ 1\leq i,j\leq n}}x_{i}x_{j}\\
 =& 2^{\frac{1}{p}}\delta (X^{T}AX) \\
 =& 2^{\frac{1}{p}}\delta \mu_{1},
\end{align*}
with equality if and only if $G$ is a regular graph.

(2) Let $Y=(y_{1},y_{2},\cdots, y_{n})$ be a unit eigenvector of $\mathcal{S}_{p}(G)$ corresponding to $\xi_{1}$. Then
\begin{align*}
\xi_{1}
 =& Y^{T}\mathcal{S}_{p}Y \\
 =& 2\sum\limits_{\substack{i\sim j \\ 1\leq i,j\leq n}}((d_{i})^{p}+(d_{j})^{p})^{\frac{1}{p}} y_{i}y_{j} \\
 \leq& 2^{(1+\frac{1}{p})}\Delta \sum\limits_{\substack{i\sim j \\ 1\leq i,j\leq n}}y_{i}y_{j}\\
 =& 2^{\frac{1}{p}}\Delta (Y^{T}AY) \\
 \leq& 2^{\frac{1}{p}}\Delta \mu_{1},
\end{align*}
with equality if and only if $G$ is a regular graph.
\end{proof}

\begin{theorem}\label{t3-2}
Let $G$ be a graph with $|V(G)|=n$, $|E(G)|=m$, the maximum $($resp. minimum$)$ degree $\Delta$ $($resp. $\delta$$)$ and average degree $\bar{d}$, $M_{1}(G)$ be the first Zagreb index of $G$, $R(G)$ be the Randic index of $G$. Then

$(1)$ $2^{1+\frac{1}{p}}\frac{m}{n}\delta \leq \xi_{1} \leq 2^{\frac{1}{p}}(2m-n+1)^{\frac{1}{2}}\Delta$,
with left equality if and only if $G$ is a regular graph, with right equality if and only if $G\cong K_{n}$.

$(2)$ $2^{\frac{1}{p}}(\frac{M_{1}(G)}{n})^{\frac{1}{2}}\delta \leq \xi_{1} \leq 2^{\frac{1}{p}}\Delta^{2}$,
with equality if and only if $G$ is a regular graph.

$(3)$ $\xi_{1}\geq 2^{\frac{1}{p}}\delta \bar{d}$, with equality if and only if $G$ is a $\bar{d}$ regular graph.

$(4)$ $\xi_{1}\geq 2^{\frac{1}{p}}\frac{\delta}{m}R(G)$, with equality if and only if $G$ is a regular graph.

\end{theorem}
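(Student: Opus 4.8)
The plan is to derive all four pairs of inequalities from Theorem \ref{t3-1}, which already traps $\xi_{1}$ between $2^{\frac{1}{p}}\delta\mu_{1}$ and $2^{\frac{1}{p}}\Delta\mu_{1}$, with equality throughout exactly when $G$ is regular. Hence it suffices to bound the adjacency spectral radius $\mu_{1}$ by elementary quantities and then multiply by $2^{\frac{1}{p}}\delta$ or $2^{\frac{1}{p}}\Delta$ as appropriate. Concretely: the lower estimates in $(1)$ and $(3)$ use $\mu_{1}\geq\frac{2m}{n}$ (note $\bar d=\frac{2m}{n}$, so these two bounds coincide); the lower estimate in $(2)$ uses $\mu_{1}\geq\sqrt{M_{1}(G)/n}$; the lower estimate in $(4)$ uses $\mu_{1}\geq\frac{1}{m}\sum_{v_{i}v_{j}\in E(G)}\sqrt{d_{i}d_{j}}=\frac{R(G)}{m}$; the upper estimate in $(2)$ uses $\mu_{1}\leq\Delta$; and the upper estimate in $(1)$ uses Hong's inequality $\mu_{1}\leq\sqrt{2m-n+1}$.

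Each $\mu_{1}$-bound I would read off from the Rayleigh quotient $\mu_{1}=\max_{X\neq 0}\frac{X^{T}AX}{X^{T}X}$, and for $(2)$ from $\mu_{1}^{2}=\max_{X\neq 0}\frac{X^{T}A^{2}X}{X^{T}X}$. Taking $X=e=(1,\dots,1)^{T}$ gives $\mu_{1}\geq\frac{e^{T}Ae}{n}=\frac{2m}{n}$, with equality iff $e$ is a Perron vector, i.e. iff $G$ is regular; this settles $(1,\text{left})$ and $(3)$. The same vector applied to $A^{2}$ yields $\mu_{1}^{2}\geq\frac{e^{T}A^{2}e}{n}=\frac{1}{n}\sum_{i=1}^{n}d_{i}^{2}=\frac{M_{1}(G)}{n}$, giving $(2,\text{left})$, since $(A^{2}e)_{i}=\sum_{k\sim i}d_{k}$ and so $e^{T}A^{2}e=\sum_{i}d_{i}^{2}$. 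For $(4)$ I would use $X=(\sqrt{d_{1}},\dots,\sqrt{d_{n}})^{T}$, so that $X^{T}AX=2\sum_{v_{i}v_{j}\in E(G)}\sqrt{d_{i}d_{j}}$ and $X^{T}X=\sum_{i}d_{i}=2m$, whence $\mu_{1}\geq\frac{1}{m}\sum_{v_{i}v_{j}\in E(G)}\sqrt{d_{i}d_{j}}=\frac{R(G)}{m}$, the equality again forcing $(\sqrt{d_{i}})_{i}$ to be a Perron vector and hence $G$ regular.

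For the upper bounds I would invoke the two classical facts that for a connected graph $\mu_{1}\leq\Delta$ (equality iff $G$ is regular) and $\mu_{1}\leq\sqrt{2m-n+1}$ (equality iff $G\cong K_{n}$ or $G\cong K_{1,n-1}$); multiplying by $2^{\frac{1}{p}}\Delta$ then gives $(2,\text{right})$ and $(1,\text{right})$. Finally I would assemble the equality characterisations: because Theorem \ref{t3-1} already demands regularity for any of its equalities, the combined equality in each part is the intersection of ``$G$ regular'' with the equality case of the $\mu_{1}$-bound employed. For $(1,\text{left})$, $(2)$, $(3)$ and $(4)$ this intersection is simply ``$G$ regular,'' while for $(1,\text{right})$ intersecting regularity with Hong's extremal set $\{K_{n},K_{1,n-1}\}$ discards the non-regular star $K_{1,n-1}$ and leaves exactly $G\cong K_{n}$. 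The only genuinely delicate point is this last reconciliation of equality cases: one must verify Hong's characterisation and confirm that every Rayleigh-quotient equality condition collapses precisely to regularity, so that it neither over-constrains nor loosens the final answer.
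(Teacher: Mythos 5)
Your proposal follows essentially the same route as the paper: every part is obtained by combining Theorem \ref{t3-1} with a classical bound on the adjacency spectral radius $\mu_{1}$ (namely $\tfrac{2m}{n}\leq\mu_{1}\leq\sqrt{2m-n+1}$, $\sqrt{M_{1}(G)/n}\leq\mu_{1}\leq\Delta$, $\mu_{1}\geq\bar d$ and $\mu_{1}\geq R(G)/m$), the only difference being that the paper merely cites these $\mu_{1}$-bounds while you re-derive them from Rayleigh quotients and reconcile the equality cases explicitly. The argument is correct; the one point worth noting is that the bound $\mu_{1}\geq R(G)/m$ with equality for regular graphs only works if $R(G)$ is read as $\sum_{v_{i}v_{j}\in E(G)}\sqrt{d_{i}d_{j}}$, which is exactly the quantity your choice of test vector produces.
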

\begin{proof}
$(1)$ Since $\frac{2m}{n}\leq \mu_{1}\leq \sqrt{2m-n+1}$, with left equality iff $G$ is a regular graph, with right equality if and only if $G\cong K_{n}$ \cite{yhong1998}. Combining with Theorem \ref{t3-1}, we have the conclusion.

$(2)$ Since $(\frac{M_{1}(G)}{n})^{\frac{1}{2}}\leq \mu_{1}\leq \Delta$, with left equality if and only if $G$ is a regular or semiregular graph, with right equality if and only if $G$ is a regular graph \cite{bzou2000}. Combining with Theorem \ref{t3-1}, we have the conclusion.

$(3)$ Since $\mu_{1}\geq \overline{d}=\frac{2m}{n}$, with equality if and only if $G$ is a $\overline{d}$-regular graph \cite{ofao1993}. Combining with Theorem \ref{t3-1}, we have the conclusion.

$(4)$ Since $\mu_{1}\geq \frac{R(G)}{m}$, with equality if and only if $G$ is a regular graph \cite{ofao1993}. Combining with Theorem \ref{t3-1}, we have the conclusion.
\end{proof}

Similar to the adjacent matrix or other non-negative matrices\cite{dehz2021}, for $p$-Sombor matrix, we also have the following similar result.
\begin{lemma}\label{l3-03}
Let $d$ be the diameter of $G$. If $S_{p}(G)$ has $k$ distinct eigenvalues, then $k\geq d+1$.
\end{lemma}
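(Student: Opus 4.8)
The plan is to mimic the classical argument relating the number of distinct eigenvalues of a nonnegative weighted adjacency-type matrix to the diameter, the crucial point being that $\mathcal{S}_{p}(G)$ carries strictly positive weights on every edge. First I would record the combinatorial meaning of the powers of $\mathcal{S}_{p}$. Every off-diagonal nonzero entry equals $((d_{i})^{p}+(d_{j})^{p})^{\frac{1}{p}}$, and since both endpoints of an edge have degree at least $1$, the base $(d_{i})^{p}+(d_{j})^{p}$ is positive and its $\frac{1}{p}$-th power is a positive real, irrespective of the sign of $p$. Thus $\mathcal{S}_{p}$ is entrywise nonnegative with exactly the same zero pattern as the adjacency matrix $A(G)$. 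Consequently, for every integer $\ell\geq 0$, the entry $(\mathcal{S}_{p}^{\ell})_{ij}$ is a sum, over all walks of length $\ell$ from $v_{i}$ to $v_{j}$, of products of positive edge weights; no cancellation can occur, so $(\mathcal{S}_{p}^{\ell})_{ij}>0$ if and only if there is a walk of length $\ell$ from $v_{i}$ to $v_{j}$ in $G$.

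Next I would fix a pair of vertices $v_{a},v_{b}$ realizing the diameter, so that the distance between them is $d$. By the interpretation above, $(\mathcal{S}_{p}^{\ell})_{ab}=0$ for every $\ell<d$, because no walk can be shorter than a shortest path, whereas $(\mathcal{S}_{p}^{d})_{ab}>0$, because a geodesic of length $d$ is in particular a walk of length $d$ and the corresponding term is a positive weight product.

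Then I would invoke the minimal-polynomial count. If $\mathcal{S}_{p}$ has exactly $k$ distinct eigenvalues, its minimal polynomial has degree $k$, so $I,\mathcal{S}_{p},\dots,\mathcal{S}_{p}^{k-1}$ span the algebra generated by $\mathcal{S}_{p}$; in particular every higher power is a linear combination $\mathcal{S}_{p}^{m}=\sum_{\ell=0}^{k-1}c_{\ell}\mathcal{S}_{p}^{\ell}$. Arguing by contradiction, suppose $k\leq d$. Writing $\mathcal{S}_{p}^{d}=\sum_{\ell=0}^{k-1}c_{\ell}\mathcal{S}_{p}^{\ell}$ and reading off the $(a,b)$ entry gives $(\mathcal{S}_{p}^{d})_{ab}=\sum_{\ell=0}^{k-1}c_{\ell}(\mathcal{S}_{p}^{\ell})_{ab}$. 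Since $k-1<d$, every index $\ell$ in the sum satisfies $\ell<d$, hence each $(\mathcal{S}_{p}^{\ell})_{ab}=0$ and the right-hand side vanishes, contradicting $(\mathcal{S}_{p}^{d})_{ab}>0$. Therefore $k\geq d+1$.

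I expect the only delicate step to be the strict positivity $(\mathcal{S}_{p}^{d})_{ab}>0$: this is precisely where nonnegativity of the weights is essential, as it guarantees that the contributions of distinct geodesics cannot cancel. Everything else is the standard linear-algebra fact that powers of a matrix beyond the degree of its minimal polynomial lie in the span of the lower powers, which is exactly why the statement parallels the adjacency-matrix case cited in the paper.
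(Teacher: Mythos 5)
Your proof is correct and follows essentially the same route as the paper's: both arguments rest on the observation that $\mathcal{S}_{p}^{\ell}$ has the same zero pattern as $A^{\ell}$ because the edge weights are strictly positive, and both derive the contradiction by expressing $\mathcal{S}_{p}^{d}$ through the degree-$k$ minimal polynomial as a combination of lower powers whose $(a,b)$ entries all vanish. The only cosmetic difference is that you reduce $\mathcal{S}_{p}^{d}$ to the span of $I,\mathcal{S}_{p},\dots,\mathcal{S}_{p}^{k-1}$, whereas the paper multiplies the minimal polynomial by $\mathcal{S}_{p}^{d-k}$ to work with the powers $\mathcal{S}_{p}^{d-k},\dots,\mathcal{S}_{p}^{d-1}$; the substance is identical.
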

\begin{proof}
Let $\xi_{1}, \xi_{2},\cdots, \xi_{k}$ be the $k$ distinct eigenvalues of $\mathcal{S}_{p}(G)$. Since $\mathcal{S}_{p}(G)$ is a symmetric matrix, the minimal polynomial is
$$f(x)=(x-\xi_{1})(x-\xi_{2})\cdots (x-\xi_{k})=x^{k}+a_{1}x^{k-1}+a_{2}x^{k-2}+\cdots +a_{k-1}x+a_{k},$$
and there exists $t$ such that
$$(S_{p})^{k+t}+a_{1}(S_{p})^{k+t-1}+a_{2}(S_{p})^{k+t-2}+\cdots +a_{k-1}(S_{p})^{t-1}+a_{k}(S_{p})^{t}=0,$$
where $a_{i}$ $(1\leq i \leq k)$ are real numbers.

Let $a_{ij}^{(t)}$ (resp, $(s_{ij}^{p})^{(t)}$) be the entries in the $i$-th row and $j$-th column of $A^{t}(G)$ (resp, $(\mathcal{S}_{p})^{t}(G)$).
It is obvious that $(s_{ij}^{p})^{(t)}=0$ if and only if $a_{ij}^{(t)}=0$.

Suppose that $k\leq d$, then there exist vertices $v_{i}$ and $v_{j}$ such that $a_{ij}^{(t)}=0$ for any $t<d$ and $a_{ij}^{(d)}\neq 0$. Thus
$(s_{ij}^{p})^{(t)}=0$ for any $t<d$ and $(s_{ij}^{p})^{(d)}\neq 0$.

On the other hand, let $t=d-k$, since $(s_{ij}^{p})^{(d-1)}=(s_{ij}^{p})^{(d-2)}=\cdots=(s_{ij}^{p})^{(d-k)}=0$, and
$(S_{p})^{k+t}+a_{1}(S_{p})^{k+t-1}+a_{2}(S_{p})^{k+t-2}+\cdots +a_{k-1}(S_{p})^{t-1}+a_{k}(S_{p})^{t}=0$, we have
$(s_{ij}^{p})^{(d)}=0$, which is a contradiction. Thus $k\geq d+1$.
\end{proof}

\begin{theorem}\label{t3-3}
Let $G$ be a graph with $|V(G)|=n$. Then
$$\xi_{1}\leq \sqrt{\frac{(n-1)N_{2}}{n}}.$$
with equality if and only if $G\cong nK_{1}$ or $G\cong K_{n}$.
\end{theorem}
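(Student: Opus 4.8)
The plan is to bound the spectral radius $\xi_1$ using the two basic facts recorded in Lemma \ref{l2-1}: namely $N_1 = \sum_{i=1}^n \xi_i = 0$ and $N_2 = \sum_{i=1}^n \xi_i^2$. The first identity tells us that the remaining eigenvalues sum to the negative of $\xi_1$, so I would first write $\sum_{i=2}^n \xi_i = -\xi_1$, and the second gives $\sum_{i=2}^n \xi_i^2 = N_2 - \xi_1^2$.

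The core step is a Cauchy--Schwarz (equivalently, power-mean) inequality applied to the $n-1$ eigenvalues $\xi_2,\dots,\xi_n$. Concretely,
\begin{align*}
\xi_1^2 = \left(\sum_{i=2}^n \xi_i\right)^2 \leq (n-1)\sum_{i=2}^n \xi_i^2 = (n-1)\left(N_2 - \xi_1^2\right).
\end{align*}
Rearranging gives $\xi_1^2 + (n-1)\xi_1^2 \leq (n-1)N_2$, that is, $n\,\xi_1^2 \leq (n-1)N_2$, which is exactly the claimed bound $\xi_1 \leq \sqrt{(n-1)N_2/n}$. This mirrors the argument already used in the proof of Theorem \ref{t2-7}, where the same Cauchy--Schwarz step appears, so the mechanics are routine.

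I do not expect any serious obstacle in the inequality itself; the only delicate part is the equality analysis. Equality in Cauchy--Schwarz forces $\xi_2 = \xi_3 = \cdots = \xi_n$, so together with $N_1=0$ all eigenvalues except $\xi_1$ coincide at the common value $-\xi_1/(n-1)$. I would then argue that a symmetric matrix with at most two distinct eigenvalues severely restricts the graph: by Lemma \ref{l3-03}, having at most two distinct $p$-Sombor eigenvalues forces the diameter to satisfy $d \le 1$, so each connected component is a single vertex or a complete graph, and a short check (using that $\xi_1=0$ when $G$ is edgeless, and that $K_n$ indeed has exactly eigenvalues $2^{1/p}(n-1)^2$ once and $-2^{1/p}(n-1)$ with multiplicity $n-1$) pins the extremal graphs down to $G \cong nK_1$ or $G \cong K_n$. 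The main care needed is to confirm that these two cases, and only these, realize the two-distinct-eigenvalue situation consistent with $N_1=0$; for $nK_1$ the matrix is zero (all $\xi_i=0$, so the bound reads $0\le 0$), and for $K_n$ one verifies the eigenvalues directly from $\mathcal{S}_p(K_n) = 2^{1/p}(n-1)A(K_n)$.
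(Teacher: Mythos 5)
Your proposal is correct and follows essentially the same route as the paper: the Cauchy--Schwarz step $\bigl(\sum_{i=2}^{n}\xi_{i}\bigr)^{2}\leq (n-1)\sum_{i=2}^{n}\xi_{i}^{2}$ combined with $N_{1}=0$ and $N_{2}=\sum_{i}\xi_{i}^{2}$, and the equality analysis via Lemma \ref{l3-03} forcing diameter at most $1$. Your treatment of the equality case is in fact slightly more careful than the paper's, but the argument is the same.
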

\begin{proof}
Since $(\sum\limits_{i=2}^{n}\xi_{i})^{2}\leq (n-1)(\sum\limits_{i=2}^{n}\xi_{i}^{2})$,
and $\sum\limits_{i=1}^{n}\xi_{i}^{2}=2\sum\limits_{i\sim j}\left((d_{i})^{p}+(d_{j})^{p}\right)^{\frac{2}{p}}=N_{2}$, then
 $\xi_{1} \leq \sqrt{\frac{(n-1)N_{2}}{n}}$, with equality if and only if $\xi_{2}=\xi_{3}=\cdots =\xi_{n}$.

If $\xi_{2}=\xi_{3}=\cdots =\xi_{n}$, then by Lemma \ref{l3-03}, we have the diameter $d\leq 1$, thus $G\cong nK_{1}$ or $G\cong K_{n}$.
If $G\cong nK_{1}$, then $\xi_{1}=0=\sqrt{\frac{(n-1)N_{2}}{n}}$; if $G\cong K_{n}$, then $\xi_{1}=2^{\frac{1}{p}}(n-1)^{2}=\sqrt{\frac{(n-1)N_{2}}{n}}$.
\end{proof}

\begin{lemma}\label{l3-4}
Let $a(\geq 0)$, $p(\geq 1)$ be constants and $f(x)=x^{p}+(a-x)^{p}$. Then
$f(x)$ is monotonically decreasing when $x\leq \frac{a}{2}$, and $f(x)$ is monotonically increasing when $x\geq \frac{a}{2}$.
\end{lemma}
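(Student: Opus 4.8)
The plan is to establish the claim by a direct first-derivative test, since $f$ is smooth on the interval of interest. Throughout I take $x$ to range over $[0,a]$, so that both $x$ and $a-x$ are nonnegative and the powers $x^{p}$ and $(a-x)^{p}$ are well defined for every real $p\geq 1$; this restriction is natural here because in the intended application $x$ plays the role of a vertex degree and is therefore nonnegative and bounded above by $a$.

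First I would differentiate, obtaining
$$f'(x)=p\,x^{p-1}-p\,(a-x)^{p-1}=p\left(x^{p-1}-(a-x)^{p-1}\right).$$
Since $p\geq 1>0$, the sign of $f'(x)$ coincides with the sign of $x^{p-1}-(a-x)^{p-1}$, so the whole problem reduces to comparing $x^{p-1}$ with $(a-x)^{p-1}$. The key observation I would then invoke is that the power function $t\mapsto t^{p-1}$ is nondecreasing on $[0,\infty)$ whenever $p\geq 1$ (indeed strictly increasing when $p>1$, and constant when $p=1$, as its exponent $p-1$ is nonnegative). Applying this monotonicity with the arguments $t=x$ and $t=a-x$ gives $x^{p-1}\leq (a-x)^{p-1}$ precisely when $x\leq a-x$, that is, when $x\leq \frac{a}{2}$, and the reverse inequality when $x\geq \frac{a}{2}$.

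Combining the two steps, I would conclude $f'(x)\leq 0$ on $[0,\frac{a}{2}]$ and $f'(x)\geq 0$ on $[\frac{a}{2},a]$, which yields the asserted decreasing/increasing behaviour. The argument is elementary and I anticipate no genuine obstacle; the only points deserving a word of care are the boundary exponent $p=1$, where $f(x)\equiv a$ is constant so the monotonicity is merely weak, and the implicit domain restriction $0\leq x\leq a$, which is what keeps $(a-x)^{p-1}$ real-valued for non-integer $p$.
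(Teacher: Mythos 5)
Your proposal is correct and follows essentially the same route as the paper: differentiate to get $f'(x)=p\left(x^{p-1}-(a-x)^{p-1}\right)$ and read off the sign from comparing $x$ with $a-x$. Your version is in fact slightly more careful than the paper's, which asserts strict inequalities for $f'$ and ignores the domain restriction $0\leq x\leq a$ and the degenerate case $p=1$, but the underlying argument is identical.
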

\begin{proof}
It is easy that
$f'(x)=px^{p-1}-p(a-x)^{p-1}=p(x^{p-1}-(a-x)^{p-1})$.

If $x\leq \frac{a}{2}$, we have $f'(x)<0$; if $x\geq \frac{a}{2}$, we have $f'(x)>0$. Thus the conclusion holds.
\end{proof}

\begin{figure}[ht!]
  \centering
  \scalebox{.15}[.15]{\includegraphics{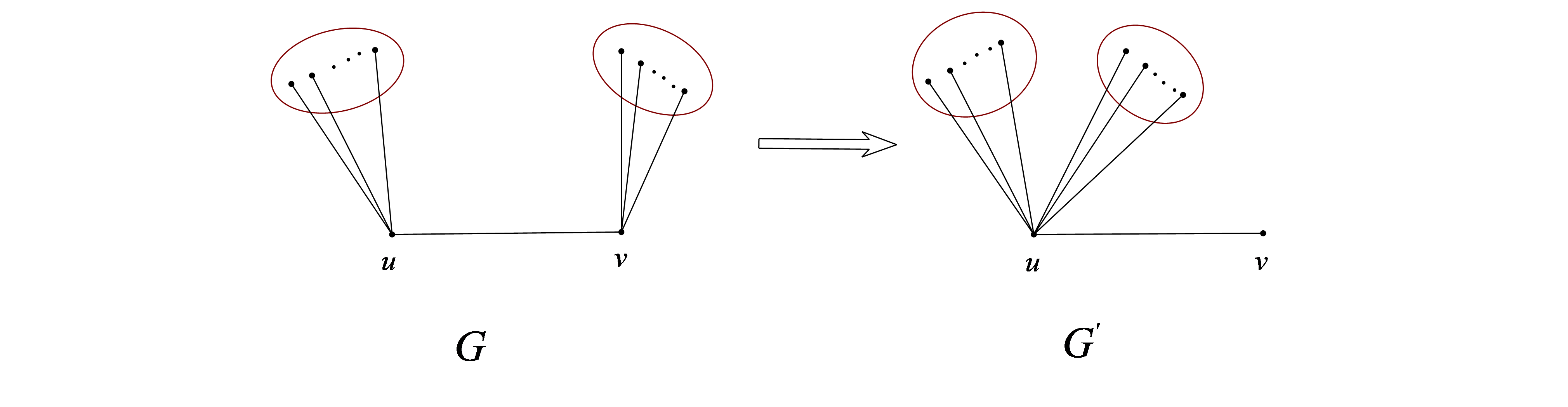}}
  \caption{Transformation of Theorem \ref{t3-5}.}
 \label{fig-1}
\end{figure}

\begin{theorem}\label{t3-5}
Let $G$ be a connected graph, $e=uv$ be a cut $($not pendent$)$ edge of $G$. Suppose that $X$ is the unit eigenvector of $G$ corresponding to the $p$-Sombor spectral radius $\xi_{1}$, i.e., $\mathcal{S}_{p}X=\xi_{1}X$, and $x_{u}$ is the component of $X$ about the vertex $u$. Let $G'=G-\{vz| z\in N_{G}(v)\setminus \{u\} \}+ \{uz| z\in N_{G}(v)\setminus \{u\} \}$ $($see Figure \ref{fig-1}$)$, $\xi_{1}'$ be the $p$-Sombor spectral radius of $G'$. If $p\geq 1$ and $x_{u}\geq x_{v}$, then $\xi_{1}'>\xi_{1}$.
\end{theorem}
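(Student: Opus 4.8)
The plan is to compare $\mathcal{S}_p(G)$ and $\mathcal{S}_p(G')$ through the Rayleigh quotient evaluated on the \emph{same} vector $X$. Since $G$ is connected, $\mathcal{S}_p(G)$ is nonnegative, symmetric and irreducible, so by Perron--Frobenius $X$ may be taken strictly positive componentwise; as $\|X\|=1$ and $\mathcal{S}_pX=\xi_1X$ we have $X^{T}\mathcal{S}_p(G)X=\xi_1$. Using $X$ as a trial vector for $G'$ gives
$$\xi_1'\ \ge\ X^{T}\mathcal{S}_p(G')X,\qquad\text{hence}\qquad \xi_1'-\xi_1\ \ge\ X^{T}\bigl(\mathcal{S}_p(G')-\mathcal{S}_p(G)\bigr)X,$$
so it suffices to prove that the right-hand side is strictly positive.

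First I would record how the degrees change. Writing $U=N_G(u)\setminus\{v\}$ and $W=N_G(v)\setminus\{u\}$ (these lie in the two components of $G-uv$, as $uv$ is a cut edge), in $G'$ the vertex $v$ becomes a pendant with $d_v'=1$, the vertex $u$ absorbs all of $W$ so that $d_u'=d_u+d_v-1$, while each $z\in W$ loses its edge to $v$ but gains one to $u$ and hence \emph{keeps} its degree; all other degrees are unchanged. The crucial consequence is that no edge avoiding both $u$ and $v$ alters its weight, so $X^{T}(\mathcal{S}_p(G')-\mathcal{S}_p(G))X$ breaks into four explicit groups of edge terms: (a) the surviving edge $uv$, reweighted from $(d_u^p+d_v^p)^{1/p}$ to $((d_u')^p+1)^{1/p}$; (b) the edges $uw$, $w\in U$, whose weights rise because $d_u'>d_u$; (c) the new edges $uz$, $z\in W$; and (d) the deleted edges $vz$, $z\in W$.

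Groups (b), (c), (d) are the easy ones. Since $d_u'>d_u$ each coefficient in (b) is positive and $x_ux_w>0$, so (b)$\,>0$; combining (c) and (d) yields $2\sum_{z\in W}x_z\bigl[((d_u')^p+d_z^p)^{1/p}x_u-(d_v^p+d_z^p)^{1/p}x_v\bigr]$, and because $uv$ is not pendant we have $d_u\ge 2$, whence $d_u'=d_u+d_v-1>d_v$; together with $x_u\ge x_v>0$ (and $W\neq\emptyset$) this makes every bracket strictly positive, so (c)$+$(d)$\,>0$.

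The one delicate point—the main obstacle—is group (a): a priori the weight of the retained edge $uv$ could \emph{drop} when $v$ is stripped to a pendant, which would oppose the other terms. Here Lemma~\ref{l3-4} is exactly the tool. Setting $a=d_u+d_v$ and $f(x)=x^p+(a-x)^p$, the $G$-weight is $f(d_u)^{1/p}$ and the $G'$-weight is $f(d_u+d_v-1)^{1/p}=f(a-1)^{1/p}$. Both split points $d_u$ and $a-1$ lie in $[1,a-1]$, and $a-1$ is the point farthest from the axis of symmetry $a/2$; since $f$ decreases for $x\le a/2$ and increases for $x\ge a/2$, this most unbalanced split maximizes $f$, giving $((d_u')^p+1)^{1/p}\ge (d_u^p+d_v^p)^{1/p}$ and hence (a)$\,\ge 0$. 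Summing the four groups gives $X^{T}(\mathcal{S}_p(G')-\mathcal{S}_p(G))X>0$, and therefore $\xi_1'>\xi_1$.
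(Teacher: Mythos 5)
Your proposal is correct and follows essentially the same route as the paper: bound $\xi_1'-\xi_1$ below by $X^{T}(\mathcal{S}_p(G')-\mathcal{S}_p(G))X$ with the Perron eigenvector of $G$ as test vector, split the difference into the reweighted edges at $u$, the transferred edges from $v$ to $u$ (handled via $x_u\ge x_v$ and $d_u+d_v-1>d_v$), and the edge $uv$ itself, which is controlled by Lemma \ref{l3-4}. Your write-up is in fact somewhat more explicit than the paper's on the strict positivity (via Perron--Frobenius) and on why the degrees of the vertices in $N_G(v)\setminus\{u\}$ are unchanged.
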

\begin{proof}
By the Raleigh Quotient and Lemma \ref{l3-4}, we have
\begin{align*}
\xi_{1}'-\xi_{1}
 \geq & X^{T}\mathcal{S}_{p}(G')X-X^{T}\mathcal{S}_{p}(G)X \\
 =& 2\sum\limits_{t\in N_{G}(u)\setminus \{v\}}[((d_{t})^{p}+(d_{u}+d_{v}-1)^{p})^{\frac{1}{p}}-((d_{t})^{p}+(d_{u})^{p})^{\frac{1}{p}}]x_{u}x_{t}\\
 &+ 2\sum\limits_{z\in N_{G}(v)\setminus \{u\}}[((d_{z})^{p}+(d_{u}+d_{v}-1)^{p})^{\frac{1}{p}}x_{u}-((d_{z})^{p}+(d_{v})^{p})^{\frac{1}{p}}x_{v}]x_{z}\\
 &+ 2[(1^{p}+(d_{u}+d_{v}-1)^{p})^{\frac{1}{p}}-((d_{u})^{p}+(d_{v})^{p})^{\frac{1}{p}}]x_{u}x_{v}\\
 \geq& 2\sum\limits_{t\in N_{G}(u)\setminus \{v\}}[((d_{t})^{p}+(d_{u}+d_{v}-1)^{p})^{\frac{1}{p}}-((d_{t})^{p}+(d_{u})^{p})^{\frac{1}{p}}]x_{u}x_{t}\\
 &+ 2\sum\limits_{z\in N_{G}(v)\setminus \{u\}}[((d_{z})^{p}+(d_{u}+d_{v}-1)^{p})^{\frac{1}{p}}-((d_{z})^{p}+(d_{v})^{p})^{\frac{1}{p}}]x_{v}x_{z}\\
 &+ 2[(1^{p}+(d_{u}+d_{v}-1)^{p})^{\frac{1}{p}}-((d_{u})^{p}+(d_{v})^{p})^{\frac{1}{p}}]x_{u}x_{v}\\
 >& 0.
\end{align*}
\end{proof}

By Theorem \ref{t3-5}, if $p\geq 1$, we can obtain the sharp upper bounds of the $p$-Sombor spectral radius of trees immediately.

\begin{theorem}\label{t3-6}
Let $T$ be a tree with $|V(T)|=n$, $\xi_{1}(T)$ be the $p$-Sombor spectral radius of $T$, where $p\geq 1$. Then
$ \xi_{1}(T)\leq \xi_{1}(K_{1,n-1}),$
with equality if and only if $T\cong K_{1,n-1}$.
\end{theorem}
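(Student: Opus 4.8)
The plan is to read Theorem~\ref{t3-5} as an edge-merging operation that strictly increases the $p$-Sombor spectral radius: given a non-pendent cut edge $uv$ with $x_u\ge x_v$ in the Perron eigenvector of $G$, it detaches every branch at $v$ and reattaches it to $u$, turning $v$ into a pendant vertex of $u$. Under this operation a tree stays a tree, since it remains connected, keeps exactly $n-1$ edges, and creates no edge $uz$ with $z\in N_G(v)\setminus\{u\}$ (such an edge would make $u,v,z$ a triangle, impossible in an acyclic graph). Thus the operation moves within the class of $n$-vertex trees while pushing weight toward the endpoint of larger eigenvector component, so that the star $K_{1,n-1}$ is the terminal, maximally concentrated tree and the path $P_n$ is the maximally spread-out one. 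I would therefore obtain the two bounds by driving an arbitrary tree up to the star and down to the path.

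For the upper bound I would argue as follows. If $T\not\cong K_{1,n-1}$ then $T$ has diameter at least $3$, hence an edge both of whose endpoints have degree at least $2$, i.e. a non-pendent cut edge. Labelling its endpoints so that $x_u\ge x_v$ in the Perron eigenvector, Theorem~\ref{t3-5} applies and produces a tree $T'$ with $\xi_1(T')>\xi_1(T)$. Since there are only finitely many trees on $n$ vertices and $\xi_1$ strictly increases at each step, the process cannot revisit a tree and must terminate; it can terminate only at a tree having no non-pendent cut edge, which is exactly $K_{1,n-1}$. A convenient strict monotone certificate for termination is the first Zagreb index, which the operation changes by $2(d_u-1)(d_v-1)>0$ and which is maximized among trees only by the star. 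Chaining the strict inequalities gives $\xi_1(T)<\xi_1(K_{1,n-1})$ for every $T\not\cong K_{1,n-1}$, with equality precisely for the star.

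For the lower bound I would run the operation in reverse, using the same first Zagreb index $M_1$ (which is uniquely minimized among trees by $P_n$) as an induction parameter. If $T\not\cong P_n$ then $T$ has a vertex of degree at least $3$ together with a pendant path, and one \emph{un-merges} a branch so as to lengthen that pendant path, producing a more spread-out tree $G$ with $M_1(G)<M_1(T)$ and, I claim, $\xi_1(G)<\xi_1(T)$. Inducting on the finitely many values of $M_1$ down to $P_n$ then yields $\xi_1(T)>\xi_1(P_n)$, with equality only for $P_n$. The claim $\xi_1(G)<\xi_1(T)$ is intended to be exactly Theorem~\ref{t3-5} read backwards, with $T$ playing the role of $G'$ and $G$ the pre-image.

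The hard part is this reverse step, and it is where the eigenvector hypothesis bites. Theorem~\ref{t3-5} guarantees an increase only when the enlarging vertex $u$ carries the larger Perron weight $x_u\ge x_v$, measured in the pre-image graph $G$. To decrease $\xi_1$ toward the path I must therefore realize the non-path tree $T$ as the output $G'$ of the operation applied to a spread-out tree $G$ whose Perron eigenvector genuinely satisfies $x_u\ge x_v$; since in a tree the more central vertex typically carries the larger Perron component, verifying this inequality for the correct choice of merge vertex is delicate. The cleanest fix is to prove a companion monotonicity lemma dual to Theorem~\ref{t3-5} via the same Rayleigh-quotient comparison together with the convexity information of Lemma~\ref{l3-4}, but now testing against the Perron eigenvector of the larger graph. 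Once that directional lemma is available, the finiteness and $M_1$-induction packaging used for the upper bound carries over verbatim to complete the lower bound and pin down the equality case $T\cong P_n$.
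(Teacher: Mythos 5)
Your upper-bound argument is correct and is, in substance, the argument the paper intends: the paper's entire ``proof'' of Theorem~\ref{t3-6} is the single sentence that the bounds follow ``immediately'' from Theorem~\ref{t3-5}, and your write-up (non-star tree has diameter $\ge 3$, hence a non-pendent edge, which in a tree is automatically a cut edge; label so that $x_u\ge x_v$; apply the transformation; certify termination by the strict increase $2(d_u-1)(d_v-1)>0$ of $M_1$, which forces arrival at $K_{1,n-1}$) is a complete and correct elaboration of that half, including the equality case.

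The lower bound, however, contains a genuine gap, and it is one you yourself flag without closing. Theorem~\ref{t3-5} is a one-directional statement: its Rayleigh-quotient argument tests the Perron vector $X$ of the \emph{pre-image} $G$ against $\mathcal{S}_p(G')$, and its hypothesis $x_u\ge x_v$ refers to that vector. To conclude $\xi_1(G)<\xi_1(T)$ for a tree $G$ that you \emph{construct} from $T$ by un-merging a branch, you must verify $x_u\ge x_v$ in the Perron vector of the constructed $G$ --- a vector over which you have no control, and which need not favour the vertex you chose to keep heavy. Your proposed fix (``prove a companion monotonicity lemma dual to Theorem~\ref{t3-5}, testing against the Perron eigenvector of the larger graph'') is exactly the missing content, and it is not routine: running the same comparison with the Perron vector $Y$ of $G'=T$ produces difference terms of the form $\bigl((d_z)^p+(d_v)^p\bigr)^{1/p}y_vy_z-\bigl((d_z)^p+(d_u+d_v-1)^p\bigr)^{1/p}y_uy_z$ whose signs are not controlled by Lemma~\ref{l3-4} alone, since now both the weights and the eigenvector components move in opposite directions. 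Indeed, the paper's own citation \cite{liwn2021} is precisely about the fact that extremal trees for degree-weighted adjacency matrices do not follow automatically from the unweighted case, so ``$P_n$ minimizes'' genuinely requires an argument. As it stands, your proof establishes only $\xi_1(T)\le\xi_1(K_{1,n-1})$ with the stated equality case; the inequality $\xi_1(P_n)\le\xi_1(T)$ remains unproved. (To be fair, the paper does not prove it either.)
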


Results of Theorem \ref{t3-1}, Theorem \ref{t3-5} and Theorem \ref{t3-6} extend the results of \cite{zlin2021}.
In the following, we consider the bounds of $p$-Sombor spectral spread of $G$.

\begin{theorem}\label{t3-7}
Let $G$ be a connected graph with $|V(T)|=n$, $\xi_{1}\geq \xi_{2}\geq \cdots \geq \xi_{n}$ be the eigenvalues of $\mathcal{S}_{p}(G)$. Then
$$ \xi_{1}-\xi_{n}\leq 2\sqrt{\sum\limits_{v_{i}v_{j}\in E(G)}((d_{i})^{p}+(d_{j})^{p})^{\frac{2}{p}}}= \sqrt{2N_{2}}.$$
\end{theorem}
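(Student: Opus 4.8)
The plan is to recognize this as a Mirsky-type bound on the spread of a symmetric matrix and to exploit the fact that $\mathcal{S}_p(G)$ has zero diagonal. The two ingredients I would use are both already recorded in Lemma \ref{l2-1}: namely $N_1 = \sum_{i=1}^n \xi_i = 0$ and $N_2 = \sum_{i=1}^n \xi_i^2 = 2\sum_{v_iv_j\in E(G)}((d_{i})^{p}+(d_{j})^{p})^{\frac{2}{p}}$. Since the right-hand side of the claimed inequality is precisely $\sqrt{2N_2}$, it suffices to establish $(\xi_1 - \xi_n)^2 \le 2N_2$ and then take square roots.

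First I would observe that because $\sum_{i=1}^n \xi_i = 0$, the eigenvalues cannot all have the same strict sign unless they all vanish; in particular $\xi_1 \ge 0 \ge \xi_n$. (If $\mathcal{S}_p(G) = 0$ then every $\xi_i = 0$ and the inequality is trivial, so I may assume otherwise; for a connected graph on $n\ge 2$ vertices there is at least one edge and this case does not arise.) Writing $a = \xi_1 \ge 0$ and $b = -\xi_n \ge 0$, the spread becomes $\xi_1 - \xi_n = a + b$.

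Next I would bound $a^2 + b^2$ from above by the full second moment: since every $\xi_i^2 \ge 0$, discarding the interior terms gives
$$a^2 + b^2 = \xi_1^2 + \xi_n^2 \le \sum_{i=1}^n \xi_i^2 = N_2.$$
Finally, applying the elementary inequality $(a+b)^2 \le 2(a^2+b^2)$ (equivalently $(a-b)^2 \ge 0$) yields
$$(\xi_1 - \xi_n)^2 = (a+b)^2 \le 2(a^2+b^2) \le 2N_2,$$
so that $\xi_1 - \xi_n \le \sqrt{2N_2} = 2\sqrt{\sum_{v_iv_j\in E(G)}((d_{i})^{p}+(d_{j})^{p})^{\frac{2}{p}}}$, as desired.

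The argument is short and its only real content is the pair of elementary inequalities in the last two displays; there is no genuine obstacle here. The two points that warrant a line of justification rather than computation are the sign claim $\xi_1 \ge 0 \ge \xi_n$, which rests entirely on $N_1 = 0$, and the dropping of the nonnegative terms $\xi_2^2, \dots, \xi_{n-1}^2$ when passing from $a^2+b^2$ to $N_2$. I would also note in passing that equality throughout would force $\xi_1 = -\xi_n$ together with $\xi_2 = \cdots = \xi_{n-1} = 0$, although the statement as given asserts only the upper bound.
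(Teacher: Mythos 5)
Your proof is correct and takes essentially the same route as the paper: both arguments discard the interior eigenvalue squares to obtain $\xi_1^2+\xi_n^2\le N_2$ and then finish with a two-term Cauchy--Schwarz step (the paper phrases it as $\xi_1-\xi_n\le \xi_1+\sqrt{N_2-\xi_1^2}\le\sqrt{2N_2}$, you as $(a+b)^2\le 2(a^2+b^2)$). The only cosmetic difference is your appeal to $N_1=0$ for the signs of $\xi_1$ and $\xi_n$, which is harmless but not actually needed, since $(\xi_1-\xi_n)^2\le 2(\xi_1^2+\xi_n^2)$ holds regardless of signs.
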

\begin{proof}
We know that $N_{2}=tr((\mathcal{S}_{p})^{2})=\xi_{1}^{2}+\xi_{2}^{2}+\cdots +\xi_{n}^{2}=2\sum\limits_{v_{i}v_{j}\in E(G)}((d_{i})^{p}+(d_{j})^{p})^{\frac{2}{p}}$.
Thus $\xi_{n}^{2}\leq N_{2}-\xi_{1}^{2}$, with equality iff $\xi_{2}=\xi_{3}=\cdots =\xi_{n-1}=0$ and $\xi_{n}=-\xi_{1}$.
So $\xi_{n}\geq -\sqrt{N_{2}-\xi_{1}^{2}}$. Thus $ \xi_{1}-\xi_{n}\leq \xi_{1}+\sqrt{N_{2}-\xi_{1}^{2}} \leq \sqrt{2N_{2}}$.
\end{proof}

\section{Bounds of $p$-Sombor energy and $p$-Sombor Estrada index}

\hskip 0.6cm
Let $G$ be a simple graph, $\xi_{i}$ $(i=1,2,\cdots,n)$ be the eigenvalues of $p$-Sombor matrix $\mathcal{S}_{p}(G)$ and $\xi_{1}\geq \xi_{2}\geq \cdots\geq \xi_{n}$, $S_{p}E=S_{p}E(G)=\sum\limits_{i=1}^{n}|\xi_{i}|$ be the $p$-Sombor energy of $G$ and $S_{p}EE=S_{p}EE(G)=\sum\limits_{i=1}^{n}e^{\xi_{i}}$ be the $p$-Sombor Estrada index of $G$.

The following bounds of $p$-Sombor energy are similar to the (reduced) Sombor energy of Theorem 5.4 of \cite{tliu2021}, we omit the proof.
\begin{theorem}\label{t4-1}
Let $G$ be a simple graph with $|V(T)|=n$, the eigenvalues of $\mathcal{S}_{p}(G)$ are $\xi_{1}\geq \xi_{2}\geq \cdots \geq \xi_{n}$. Then

$(1)$ $\sqrt{2N_{2}}\leq S_{p}E(G)\leq \sqrt{nN_{2}}$

$(2)$ $ S_{p}E(G)\geq \sqrt{n(n-1)D^{\frac{2}{n}}+N_{2}}$,
where $D=|\mu_{1}\mu_{2}\cdots \mu_{n}|=|det(\mathcal{S}_{p}(G))|$.

$(3)$ $ S_{p}E(G)\geq \frac{N_{2}+n|\xi_{1}||\xi_{n}|}{|\xi_{1}|+|\xi_{n}|}$.

$(4)$ $ S_{p}E(G)\geq \frac{1}{2}\sqrt{4nN_{2}-n^{2}(|\xi_{1}|-|\xi_{n}|)^{2}}$.

$(5)$ $S_{p}(G)\geq  \sqrt{nN_{2}-n \lfloor \frac{n}{2} \rfloor (1-\frac{1}{n}\lfloor \frac{n}{2} \rfloor ) (|\xi_{1}|-|\xi_{n}|)^{2}}$.

\end{theorem}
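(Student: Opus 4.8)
The plan is to derive all five bounds uniformly from the two lowest spectral moments $N_1=\sum_i\xi_i=0$ and $N_2=\sum_i\xi_i^2$, together with the observation that $\mathcal{S}_p(G)$ is a nonnegative symmetric matrix, so by Perron--Frobenius its spectral radius is the largest eigenvalue $\xi_1$ and hence $|\xi_i|\le \xi_1=|\xi_1|$ for every $i$. I write $S_pE=\sum_i|\xi_i|$ throughout. For $(1)$, the upper bound is immediate from Cauchy--Schwarz: $S_pE=\sum_i|\xi_i|\cdot 1\le \sqrt n\,(\sum_i\xi_i^2)^{1/2}=\sqrt{nN_2}$. For the lower bound I would square, $(S_pE)^2=\sum_i\xi_i^2+\sum_{i\ne j}|\xi_i||\xi_j|=N_2+\sum_{i\ne j}|\xi_i||\xi_j|$, and use $N_1=0$, which gives $\sum_{i\ne j}\xi_i\xi_j=(\sum_i\xi_i)^2-\sum_i\xi_i^2=-N_2$; then $\sum_{i\ne j}|\xi_i||\xi_j|\ge |\sum_{i\ne j}\xi_i\xi_j|=N_2$, so $(S_pE)^2\ge 2N_2$.

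For $(2)$, starting again from $(S_pE)^2=N_2+\sum_{i\ne j}|\xi_i||\xi_j|$, I would apply AM--GM to the $n(n-1)$ positive terms $|\xi_i||\xi_j|$ ($i\ne j$): their geometric mean is $\big(\prod_i|\xi_i|^{2(n-1)}\big)^{1/(n(n-1))}=\big(\prod_i|\xi_i|\big)^{2/n}=D^{2/n}$, so $\sum_{i\ne j}|\xi_i||\xi_j|\ge n(n-1)D^{2/n}$ and $(S_pE)^2\ge N_2+n(n-1)D^{2/n}$. For $(3)$, the key device is the inequality $\sum_i(|\xi_1|-|\xi_i|)(|\xi_i|-|\xi_n|)\ge 0$; expanding and substituting $\sum_i\xi_i^2=N_2$ and $\sum_i|\xi_i|=S_pE$ turns it into $(|\xi_1|+|\xi_n|)S_pE\ge N_2+n|\xi_1||\xi_n|$, which rearranges to the stated bound.

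For $(4)$ and $(5)$ I would record the Lagrange identity $nN_2-(S_pE)^2=n\sum_i|\xi_i|^2-(\sum_i|\xi_i|)^2=\sum_{i<j}(|\xi_i|-|\xi_j|)^2$, so it suffices to bound the pairwise sum from above. For $(4)$, the right side equals $n^2$ times the variance of the numbers $|\xi_1|,\dots,|\xi_n|$; since these lie in $[|\xi_n|,|\xi_1|]$, Popoviciu's inequality bounds that variance by $\tfrac14(|\xi_1|-|\xi_n|)^2$, giving $nN_2-(S_pE)^2\le \tfrac{n^2}{4}(|\xi_1|-|\xi_n|)^2$, i.e. $(4)$. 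For $(5)$ I would replace this continuous estimate by the exact combinatorial maximum: the function $(x_i)\mapsto\sum_{i<j}(x_i-x_j)^2$ is convex and hence maximized over the box $[|\xi_n|,|\xi_1|]^n$ at a vertex; putting $k$ coordinates at $|\xi_n|$ and the rest at $|\xi_1|$ gives $k(n-k)(|\xi_1|-|\xi_n|)^2$, largest at $k=\lfloor n/2\rfloor$, so $nN_2-(S_pE)^2\le n\lfloor n/2\rfloor(1-\tfrac1n\lfloor n/2\rfloor)(|\xi_1|-|\xi_n|)^2$, which rearranges to $(5)$ (where the statement's $S_p(G)$ is a typo for $S_pE(G)$).

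The main obstacle is a single interval-membership point shared by $(3)$, $(4)$ and $(5)$: each rests on treating the moduli $|\xi_i|$ as lying in $[|\xi_n|,|\xi_1|]$. The upper endpoint $|\xi_i|\le\xi_1$ is guaranteed by Perron--Frobenius, but the lower endpoint $|\xi_i|\ge|\xi_n|$ is not automatic, since $|\xi_n|$ is the modulus of the most negative eigenvalue and a positive $\xi_i$ near $0$ can have smaller modulus. The clean fix I would adopt is to read $|\xi_n|$ in these three bounds as $\min_i|\xi_i|$ (or to check against the cited argument of \cite{tliu2021} that its difference-of-extremes setup makes the inequalities survive); with that reading the product-of-differences step in $(3)$ and the variance/convexity steps in $(4)$--$(5)$ hold verbatim, and everything else reduces to routine algebra.
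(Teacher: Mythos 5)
The paper contains no proof of this theorem to compare against: it is stated with the remark that the bounds are ``similar to \dots\ Theorem 5.4 of \cite{tliu2021}, we omit the proof.'' Your derivations are the standard package for such energy bounds, and parts $(1)$ and $(2)$ are correct exactly as you present them (Cauchy--Schwarz for the upper bound; $\sum_{i\neq j}\xi_i\xi_j=N_1^2-N_2=-N_2$ plus the triangle inequality for the lower bound; AM--GM over the $n(n-1)$ off-diagonal products $|\xi_i||\xi_j|$ for $(2)$).

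The point you raise at the end about $(3)$--$(5)$ deserves to be stated more forcefully: the hypothesis $|\xi_i|\in[|\xi_n|,|\xi_1|]$ is not a removable technicality, and with the paper's ordering of $\xi_1\ge\cdots\ge\xi_n$ as real numbers it genuinely fails, so the theorem as printed is false. Concretely, for $G=P_3$ one has $\mathcal{S}_p(P_3)=w\,A(P_3)$ with $w=(1+2^p)^{1/p}$, hence spectrum $\{\sqrt2\,w,\,0,\,-\sqrt2\,w\}$, $N_2=4w^2$ and $S_pE=2\sqrt2\,w\approx2.83w$; the right-hand sides of $(3)$, $(4)$ and $(5)$ evaluate to $\tfrac{5}{\sqrt2}w$, $2\sqrt3\,w$ and $2\sqrt3\,w$ respectively, all of which exceed $S_pE$. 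Your proposed repair --- reading $|\xi_n|$ as $\min_i|\xi_i|$, i.e.\ ordering by modulus --- is precisely the hypothesis under which your product-of-differences argument for $(3)$, the Popoviciu/variance step for $(4)$, and the vertex-of-the-box maximization for $(5)$ all go through, and it is how these Milovanovi\'{c}-type bounds are stated in the sources this paper leans on. So your proof is correct under the corrected reading; the defect it runs into is in the theorem statement, not in your argument. (You are also right that ``$S_{p}(G)$'' in part $(5)$ should read $S_{p}E(G)$.)
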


\begin{theorem}\label{t4-2}
Let $G$ be a simple graph with $|V(G)|=n$, $|E(G)|=m\geq 1$. Then
$$ S_{p}E(G)\geq \sqrt{\frac{(N_{2})^{3}}{N_{4}}}.$$
\end{theorem}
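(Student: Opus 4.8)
The plan is to use a power-mean / Cauchy--Schwarz argument relating the three moments $N_{2}$, $N_{4}$ and the energy $S_{p}E(G)=\sum_{i=1}^{n}|\xi_i|$. The key observation is that $N_2=\sum_i \xi_i^2=\sum_i |\xi_i|^2$ and $N_4=\sum_i \xi_i^4=\sum_i |\xi_i|^4$, so all three quantities are built from the nonnegative numbers $|\xi_i|$. First I would apply the Cauchy--Schwarz inequality in the form
$$
\left(\sum_{i=1}^{n}\xi_i^2\right)^{2}=\left(\sum_{i=1}^{n}|\xi_i|^{1/2}\cdot|\xi_i|^{3/2}\right)^{2}\le\left(\sum_{i=1}^{n}|\xi_i|\right)\left(\sum_{i=1}^{n}|\xi_i|^{3}\right),
$$
which gives $N_2^{2}\le S_pE(G)\cdot\sum_i|\xi_i|^3$. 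This isolates the energy but leaves the awkward third power-sum $\sum_i|\xi_i|^3$, which is not one of the moments we are allowed to use.

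The main obstacle is therefore to bound $\sum_{i=1}^{n}|\xi_i|^{3}$ from above by something involving only $N_2$ and $N_4$. The natural tool is again Cauchy--Schwarz, now splitting the cube as $|\xi_i|^{3}=|\xi_i|\cdot|\xi_i|^{2}$ and pairing so that the squares and fourth powers appear:
$$
\left(\sum_{i=1}^{n}|\xi_i|^{3}\right)^{2}=\left(\sum_{i=1}^{n}|\xi_i|\cdot|\xi_i|^{2}\right)^{2}\le\left(\sum_{i=1}^{n}|\xi_i|^{2}\right)\left(\sum_{i=1}^{n}|\xi_i|^{4}\right)=N_2\,N_4.
$$
Hence $\sum_i|\xi_i|^{3}\le\sqrt{N_2 N_4}$. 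Substituting this into the first inequality yields $N_2^{2}\le S_pE(G)\cdot\sqrt{N_2N_4}$, so that
$$
S_pE(G)\ge\frac{N_2^{2}}{\sqrt{N_2N_4}}=\sqrt{\frac{N_2^{4}}{N_2N_4}}=\sqrt{\frac{N_2^{3}}{N_4}},
$$
which is exactly the desired bound. I would close by noting that $N_4>0$ whenever $m\ge1$ (since then $\mathcal{S}_p(G)$ is a nonzero symmetric matrix, forcing $N_4=\operatorname{tr}((\mathcal{S}_p)^4)=\|(\mathcal{S}_p)^2\|_F^2>0$), so the division is legitimate.

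I expect the only subtlety beyond the two Cauchy--Schwarz steps to be the bookkeeping of which power-sums go where; both applications are routine once the exponents $1,2,3,4$ are matched so that consecutive inequalities telescope into $N_2$ and $N_4$ alone. If one wants equality conditions, they would come from the equality case of Cauchy--Schwarz (all nonzero $|\xi_i|$ equal), but since the statement as given asks only for the inequality, I would not pursue that.
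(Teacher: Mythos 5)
Your argument is correct. The paper proves the same inequality in one step, applying the H\"older inequality with exponents $3/2$ and $3$ to the splitting $|\xi_i|^{2}=|\xi_i|^{2/3}\cdot(|\xi_i|^{4})^{1/3}$, which gives $N_2\le (S_pE(G))^{2/3}N_4^{1/3}$ and hence the bound directly. You instead chain two Cauchy--Schwarz inequalities, introducing the intermediate power sum $\sum_i|\xi_i|^{3}$ and then eliminating it via $\bigl(\sum_i|\xi_i|^{3}\bigr)^{2}\le N_2N_4$; this is exactly the standard derivation of that instance of H\"older (equivalently, log-convexity of the moments $k\mapsto\sum_i|\xi_i|^{k}$) from the $p=q=2$ case, so the two proofs are the same interpolation argument implemented with different tools. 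What your version buys is that it only ever invokes Cauchy--Schwarz, at the cost of one extra step and the appearance of the auxiliary quantity $\sum_i|\xi_i|^{3}$ (note this is \emph{not} $N_3=\sum_i\xi_i^{3}$, which can be smaller; you correctly avoid conflating them). Your closing remark that $N_4>0$ for $m\ge 1$ is a legitimate point the paper leaves implicit. Note also that the paper's neighbouring Theorem \ref{t4-3} uses your first Cauchy--Schwarz step verbatim (with $\sum_i|\xi_i|^{3}$ replaced by $N_3$), so your proof can be read as combining the mechanisms of Theorems \ref{t4-2} and \ref{t4-3}.
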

\begin{proof}
By H\"{o}lder inequality, we have
$\sum\limits_{i=1}^{n}|\xi_{i}|^{2}=\sum\limits_{i=1}^{n}|\xi_{i}|^{\frac{2}{3}} (|\xi_{i}|^{4})^{\frac{1}{3}} \leq (\sum\limits_{i=1}^{n}|\xi_{i}|)^{\frac{2}{3}}(\sum\limits_{i=1}^{n}|\xi_{i}|^{4})^{\frac{1}{3}} $.

Since $S_{p}E(G)=\sum\limits_{i=1}^{n}|{\xi_{i}}|$,
$\sum\limits_{i=1}^{n}{\xi_{i}^{2}}= N_{2}$,
and
$\sum\limits_{i=1}^{n}{\xi_{i}^{4}}=N_{4}$.
We get, $N_{2}\leq (S_{p}E(G))^{\frac{2}{3}}(N_{4})^{\frac{1}{3}}$, and then
$S_{p}E(G)\geq \sqrt{\frac{(N_{2})^{3}}{N_{4}}}$.
\end{proof}

\begin{theorem}\label{t4-3}
Let $G$ be a simple graph with $|V(G)|=n$, $|E(G)|=m\geq 1$. Then
$$ S_{p}E(G)\geq \frac{(N_{2})^{2}}{N_{3}}.$$
\end{theorem}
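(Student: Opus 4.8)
The plan is to follow the template of the proof of Theorem \ref{t4-2}, replacing the H\"older split that produced $N_4$ by a Cauchy--Schwarz split that produces a third-power sum. Concretely, I would start from the identity $N_2=\sum_{i=1}^n\xi_i^2=\sum_{i=1}^n|\xi_i|^2$ (valid since the exponent is even), and factor each summand as $|\xi_i|^2=|\xi_i|^{1/2}\cdot|\xi_i|^{3/2}$. This is the natural analogue of the $|\xi_i|^2=|\xi_i|^{2/3}\cdot|\xi_i|^{4/3}$ decomposition used in Theorem \ref{t4-2}.

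Applying the Cauchy--Schwarz inequality (H\"older with conjugate exponents $2$ and $2$) then gives
$$N_2=\sum_{i=1}^n|\xi_i|^{1/2}|\xi_i|^{3/2}\le\Big(\sum_{i=1}^n|\xi_i|\Big)^{1/2}\Big(\sum_{i=1}^n|\xi_i|^3\Big)^{1/2}.$$
Since $\sum_{i=1}^n|\xi_i|=S_pE(G)$, squaring both sides yields $N_2^2\le S_pE(G)\cdot\sum_{i=1}^n|\xi_i|^3$, and a rearrangement then produces a lower bound on $S_pE(G)$ of exactly the desired shape, once the cubic sum appearing on the right is identified with the third spectral moment. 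No further graph-theoretic input is required beyond the moment computations of Lemma \ref{l2-1}.

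The step I expect to be the real obstacle is precisely this last identification. The third spectral moment is $N_3=\sum_{i=1}^n\xi_i^3=tr((\mathcal{S}_p)^3)$, a \emph{signed} quantity (indeed nonnegative, being a weighted count of closed $3$-walks with the positive weights $s_{ij}^p$, as in Lemma \ref{l2-1}(4)), whereas Cauchy--Schwarz naturally delivers the \emph{absolute} cubic sum $\sum_{i=1}^n|\xi_i|^3$. These two quantities coincide only when all nonzero $p$-Sombor eigenvalues share one sign; but $tr(\mathcal{S}_p)=N_1=0$ with $\mathcal{S}_p\neq 0$ forces both positive and negative eigenvalues, so in general $N_3<\sum_{i=1}^n|\xi_i|^3$ and the clean Cauchy--Schwarz output is $S_pE(G)\ge N_2^2/\sum_{i=1}^n|\xi_i|^3$ rather than $S_pE(G)\ge N_2^2/N_3$. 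Thus the honest deliverable of this approach is the bound with $\sum|\xi_i|^3$ in the denominator, and to obtain the stated form one must either restrict to cases where $N_3=\sum|\xi_i|^3$ or else accept the identification of the absolute cubic moment with $N_3$; reconciling this sign bookkeeping is the crux of the argument, the remainder being a one-line application of Cauchy--Schwarz parallel to Theorem \ref{t4-2}.
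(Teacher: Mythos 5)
Your argument is exactly the paper's: the same factorization $|\xi_i|^2=|\xi_i|^{1/2}\,|\xi_i|^{3/2}$, the same application of Cauchy--Schwarz, and the same final rearrangement. The obstacle you flag at the end is not a defect of your write-up but a genuine gap that the paper's own proof shares: after obtaining $N_2\le\bigl(\sum_{i=1}^n|\xi_i|\bigr)^{1/2}\bigl(\sum_{i=1}^n|\xi_i|^3\bigr)^{1/2}$, the paper simply writes $\sum_{i=1}^n\xi_i^3=N_3$ and substitutes it for $\sum_{i=1}^n|\xi_i|^3$ without comment. As you observe, $N_1=\mathrm{tr}(\mathcal{S}_p)=0$ with $m\ge 1$ forces eigenvalues of both signs, so $N_3<\sum_{i=1}^n|\xi_i|^3$ strictly, and the substitution goes the wrong way: it strengthens the denominator and hence the claimed bound. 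The defensible conclusion of this method is $S_pE(G)\ge (N_2)^2/\sum_{i=1}^n|\xi_i|^3$, which is weaker than the stated $(N_2)^2/N_3$. Indeed the theorem as printed cannot be right in full generality: for any triangle-free graph with at least one edge (e.g.\ $K_2$), Lemma \ref{l2-1}(4) gives $N_3=0$ while $N_2>0$, so the right-hand side is undefined or infinite. So your diagnosis is correct; the ``crux'' you identify is not something you failed to resolve but an error to be repaired in the statement itself, either by replacing $N_3$ with the absolute third moment or by restricting to graphs for which the two coincide.
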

\begin{proof}
By H\"{o}lder inequality, we have
$\sum\limits_{i=1}^{n}|\xi_{i}|^{2}=\sum\limits_{i=1}^{n}|\xi_{i}|^{\frac{1}{2}} (|\xi_{i}|^{3})^{\frac{1}{2}} \leq (\sum\limits_{i=1}^{n}|\xi_{i}|)^{\frac{1}{2}}(\sum\limits_{i=1}^{n}|\xi_{i}|^{3})^{\frac{1}{2}} $.

Since $S_{p}E(G)=\sum\limits_{i=1}^{n}|{\xi_{i}}|$,
$\sum\limits_{i=1}^{n}{\xi_{i}^{2}}= N_{2}$,
and
$\sum\limits_{i=1}^{n}{\xi_{i}^{3}}=N_{3}$.
We get, $N_{2}\leq (S_{p}E(G))^{\frac{1}{2}}(N_{3})^{\frac{1}{2}}$, and then
$S_{p}E(G)\geq \frac{(N_{2})^{2}}{N_{3}}$.
\end{proof}

Compare the results of Theorems \ref{t4-2} and \ref{t4-3}, we find that $(1)$ if $\frac{N_{3}}{\sqrt{N_{2}N_{4}}}>1$, then the bound of Theorem \ref{t4-2} is better; $(2)$ if $\frac{N_{3}}{\sqrt{N_{2}N_{4}}}<1$, then the bound of Theorem \ref{t4-3} is better; $(3)$ if $\frac{N_{3}}{\sqrt{N_{2}N_{4}}}=1$, then the bound of Theorems \ref{t4-2} and \ref{t4-3} are the same.

Similar to the proof of Theorems 8, 15, 16, 24 and Corollary 12 of \cite{kcds2019}, we have the following results, we omit the proof.
\begin{theorem}\label{t4-10}
Let $G$ be a simple graph with $|V(G)|=n$. Then

$(1)$ $S_{p}EE(G)-S_{p}E(G)\leq n-1+e^{\sqrt{N_{2}}}-\sqrt{N_{2}}-\sqrt{2N_{2}}$,
with equality if and only if $G\cong nK_{1}$.

$(2)$ $S_{p}EE(G)+S_{p}E(G)\leq n-1+e^{S_{p}E(G)}$,
with equality if and only if $G\cong nK_{1}$.

$(3)$ $S_{p}E(G)\leq \sqrt{\frac{(3n-1)}{3}N_{2}}$,
with equality if and only if $G\cong nK_{1}$.

$(4)$ $S_{p}EE(G)\leq n-1+\frac{1}{2}N_{2}+\frac{1}{6}N_{3}-N_{4}^{\frac{1}{4}}-\frac{1}{2}N_{4}^{\frac{1}{2}}-\frac{1}{6}N_{4}^{\frac{3}{4}}+e^{N_{4}^{\frac{1}{4}}}$.


$(5)$ $S_{p}EE(G)\geq \sqrt{n^{2}+(\frac{1}{2}N_{2})^{2}+nN_{2}+\frac{1}{3}nN_{3}+\frac{1}{12}nN_{4}}$.

\end{theorem}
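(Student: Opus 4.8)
The plan is to derive all five bounds from the single representation $S_pEE(G)=\sum_{i=1}^{n}e^{\xi_i}=\sum_{k\ge 0}\frac{N_k}{k!}$, together with $N_1=0$ from Lemma~\ref{l2-1} and the elementary facts that $\max_i|\xi_i|\le\sqrt{N_2}$ (since $\sum_i\xi_i^2=N_2$) and $\max_i|\xi_i|\le N_4^{\frac{1}{4}}$ (since $\sum_i\xi_i^4=N_4$). Each estimate then amounts to truncating this exponential series and controlling the tail by a matching power of a moment, exactly as in the corresponding results of \cite{kcds2019} with $\mu_i$ replaced by $\xi_i$ and $2m$ replaced by $N_2$.

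For $(1)$ I would write $e^{\xi_i}=1+\xi_i+\frac{\xi_i^2}{2}+\sum_{k\ge 3}\frac{\xi_i^k}{k!}$ and sum over $i$, using $N_1=0$, to obtain $S_pEE=n+\frac{N_2}{2}+\sum_{k\ge 3}\frac{N_k}{k!}$. Bounding $N_k\le\sum_i|\xi_i|^k\le(\sqrt{N_2})^{k-2}N_2=N_2^{k/2}$ collapses the tail into $e^{\sqrt{N_2}}-1-\sqrt{N_2}-\frac{N_2}{2}$, giving $S_pEE\le n-1+e^{\sqrt{N_2}}-\sqrt{N_2}$; subtracting the lower bound $S_pE\ge\sqrt{2N_2}$ of Theorem~\ref{t4-1}$(1)$ yields $(1)$. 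For $(2)$ the same expansion gives $S_pEE=n+\sum_{k\ge 2}\frac{N_k}{k!}$, and the coarser estimate $N_k\le(S_pE)^k$ (valid since $\max_i|\xi_i|\le S_pE$) sums to $e^{S_pE}-1-S_pE$, whence $S_pEE\le n-1+e^{S_pE}-S_pE$, i.e.\ $(2)$. In both cases the empty graph forces every $\xi_i=0$, turning all inequalities into equalities.

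For $(4)$ I would push the expansion one further order, $S_pEE=n+\frac{N_2}{2}+\frac{N_3}{6}+\sum_{k\ge 4}\frac{N_k}{k!}$, and bound the tail through $N_k\le(N_4^{\frac{1}{4}})^{k-4}N_4=N_4^{k/4}$, so that $\sum_{k\ge 4}\frac{N_k}{k!}\le e^{N_4^{\frac{1}{4}}}-1-N_4^{\frac{1}{4}}-\frac{1}{2}N_4^{\frac{1}{2}}-\frac{1}{6}N_4^{\frac{3}{4}}$; collecting terms reproduces $(4)$ verbatim. For $(3)$ the natural route is the identity $nN_2-(S_pE)^2=\sum_{i<j}(|\xi_i|-|\xi_j|)^2$, as in Corollary~12 of \cite{kcds2019}: plain Cauchy--Schwarz gives only $S_pE\le\sqrt{nN_2}$, so the point is to bound $\sum_{i<j}(|\xi_i|-|\xi_j|)^2$ from below using $N_1=0$ in order to replace the constant $n$ by $\frac{3n-1}{3}$.

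Part $(5)$ is the one I would treat separately, through the identity $(S_pEE)^2=\sum_{i,j}e^{\xi_i+\xi_j}=\sum_{k\ge 0}\frac{1}{k!}\sum_{i,j}(\xi_i+\xi_j)^k$. Expanding $(\xi_i+\xi_j)^k$ by the binomial theorem and using $N_1=0$ gives $\sum_{i,j}(\xi_i+\xi_j)^0=n^2$, a vanishing $k=1$ term, $\sum_{i,j}(\xi_i+\xi_j)^2=2nN_2$, $\sum_{i,j}(\xi_i+\xi_j)^3=2nN_3$, and $\sum_{i,j}(\xi_i+\xi_j)^4=2nN_4+6N_2^2$, which already assemble the five terms $n^2+nN_2+\frac{1}{3}nN_3+\frac{1}{12}nN_4+\frac{1}{4}N_2^2$ under the root. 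Retaining only $k\le 4$ then gives the claimed lower bound, \emph{provided} the tail $\sum_{k\ge 5}\frac{1}{k!}\sum_{i,j}(\xi_i+\xi_j)^k$ is nonnegative. This nonnegativity is where I expect the real difficulty: a degree-four Taylor truncation of $e^y$ is not a global lower bound, so the tail cannot be dismissed term by term, and one must argue that summing over the sign-symmetric spectrum (forced by $N_1=0$) makes the aggregate nonnegative, matching the argument of Theorem~24 of \cite{kcds2019}. A comparable but milder care is needed in $(3)$, where extracting the sharpened constant $\frac{3n-1}{3}$ rather than the trivial $n$ is the crux; throughout I would finally trace which estimates are tight to confirm the stated equality case $G\cong nK_1$.
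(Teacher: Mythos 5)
The paper gives no proof of this theorem at all --- it only asserts that the statements follow as in Theorems 8, 15, 16, 24 and Corollary 12 of \cite{kcds2019} and omits the details --- so your reconstruction is exactly the intended route. Your arguments for $(1)$, $(2)$ and $(4)$ are complete and correct: expand $S_{p}EE=\sum_{k\ge 0}N_{k}/k!$, use $N_{1}=0$, bound the tail by the norm inequalities $\sum_{i}|\xi_{i}|^{k}\le N_{2}^{k/2}$, $\sum_{i}|\xi_{i}|^{k}\le (S_{p}E)^{k}$ and $\sum_{i}|\xi_{i}|^{k}\le N_{4}^{k/4}$ respectively, and finish $(1)$ with $S_{p}E\ge\sqrt{2N_{2}}$ from Theorem \ref{t4-1}.

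The gap you flag in $(5)$ is real but closable, and the missing observation is simply that every spectral moment of $\mathcal{S}_{p}$ is nonnegative: $N_{k}=tr\bigl((\mathcal{S}_{p})^{k}\bigr)$ is a sum of products of nonnegative entries (weighted closed walks), so $N_{k}\ge 0$ for all $k$. Consequently $\sum_{i,j}(\xi_{i}+\xi_{j})^{k}=\sum_{l=0}^{k}\binom{k}{l}N_{l}N_{k-l}\ge 0$ for every $k$, and the tail you were worried about is nonnegative term by term; equivalently, one may square the series $S_{p}EE=\sum_{k\ge0}N_{k}/k!$ of nonnegative terms and retain only the products $N_{0}^{2}$, $2N_{0}\cdot\frac{N_{2}}{2}$, $2N_{0}\cdot\frac{N_{3}}{6}$, $2N_{0}\cdot\frac{N_{4}}{24}$ and $\left(\frac{N_{2}}{2}\right)^{2}$. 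No sign-symmetry argument is needed.

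Part $(3)$, however, cannot be completed along the lines you sketch, because the statement itself fails as written. The required strengthening of Cauchy--Schwarz amounts to $\sum_{i<j}(|\xi_{i}|-|\xi_{j}|)^{2}\ge\frac{1}{3}N_{2}$, which is violated whenever all $|\xi_{i}|$ are equal and nonzero. Concretely, for $G\cong K_{2}$ the spectrum of $\mathcal{S}_{p}$ is $\{\pm 2^{1/p}\}$, so $(S_{p}E)^{2}=4\cdot 2^{2/p}=nN_{2}$, while $\frac{3n-1}{3}N_{2}=\frac{10}{3}\cdot 2^{2/p}<4\cdot 2^{2/p}$; the same happens for $\frac{n}{2}K_{2}$. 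You were therefore right to isolate the constant $\frac{3n-1}{3}$ as the crux: without additional hypotheses only the trivial bound $S_{p}E\le\sqrt{nN_{2}}$ is available, and $(3)$ needs either a corrected constant or an added assumption excluding these graphs.
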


Similar to the proof of Theorems 10, 11 of \cite{ragu2018}, we have the following results, we also omit the proof.
\begin{theorem}\label{t4-11}
Let $G$ be a simple graph with $|V(G)|=n$, $n_{+},n_{0},n_{-}$ be the number of positive, zero, negative $p$-Sombor eigenvalues, respectively. Then

$(1)$ $\frac{1}{2}(e-1)S_{p}E(G)+n-n_{+}\leq S_{p}EE(G)\leq n-1+e^{\frac{S_{p}E(G)}{2}}$,
with both sides equality if and only if $G\cong nK_{1}$.

$(2)$ $S_{p}EE(G)\geq e^{\xi_{1}}+n_{0}+(n_{+}-1)e^{\frac{S_{p}E(G)-2\xi_{1}}{2(n_{+}-1)}}+e^{-\frac{S_{p}E(G)}{2n_{-}}}n_{-}$.
\end{theorem}

\vskip 0.2cm
In the following, we consider the bound of energy of subdivision graph of a $k$-regular graph, where
the subdivision graph $S(G)$ is the graph obtained from $G$ by adding one vertex for every edge of $G$.

\begin{theorem}\label{t4-12}
Let $G$ be a $k$-regular graph with $|V(G)|=n$, $|E(G)|=m$, $S(G)$ be the subdivision graph of $G$. Then
$$ S_{p}E(S(G))\leq 2\sqrt{2}\sqrt{mn}(2^{p}+k^{p})^{\frac{1}{p}}.$$
\end{theorem}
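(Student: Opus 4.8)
The plan is to reduce $S_pE(S(G))$ to the ordinary adjacency energy $\mathcal{E}(S(G))$ and then to exploit the special spectral structure of the subdivision graph of a regular graph. \emph{Step 1 (reduction to adjacency energy).} First I would analyze the degrees in $S(G)$: each original vertex of the $k$-regular graph $G$ still has degree $k$, while each of the $m$ inserted vertices has degree $2$, and $S(G)$ is bipartite with these two classes. Hence every edge of $S(G)$ joins a degree-$k$ vertex to a degree-$2$ vertex, so every nonzero entry of $\mathcal{S}_p(S(G))$ equals $(2^p+k^p)^{1/p}$. This gives $\mathcal{S}_p(S(G))=(2^p+k^p)^{1/p}A(S(G))$ and therefore $S_pE(S(G))=(2^p+k^p)^{1/p}\,\mathcal{E}(S(G))$, so it remains to bound $\mathcal{E}(S(G))$ by $2\sqrt{2}\sqrt{mn}$.

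\emph{Step 2 (spectrum of $S(G)$).} Writing $B$ for the $n\times m$ vertex--edge incidence matrix of $G$, the adjacency matrix of $S(G)$ has the block form $\left(\begin{smallmatrix}0 & B\\ B^{T} & 0\end{smallmatrix}\right)$, whose eigenvalues are $\pm$ the singular values of $B$ together with some zeros. Since $G$ is $k$-regular and simple, $BB^{T}=A(G)+kI$, so the eigenvalues of $BB^{T}$ are $\mu_i+k$ for $1\le i\le n$, and the nonzero eigenvalues of $S(G)$ are exactly $\pm\sqrt{\mu_i+k}$. Consequently $\mathcal{E}(S(G))=2\sum_{i=1}^{n}\sqrt{\mu_i+k}$, the indices with $\mu_i=-k$ simply contributing $0$.

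\emph{Step 3 (Cauchy--Schwarz and substitution).} By the Cauchy--Schwarz inequality together with $\sum_{i=1}^{n}\mu_i=\mathrm{tr}\,A(G)=0$,
\[
\sum_{i=1}^{n}\sqrt{\mu_i+k}\le\sqrt{\,n\sum_{i=1}^{n}(\mu_i+k)\,}=\sqrt{n\cdot nk}=n\sqrt{k}.
\]
Thus $\mathcal{E}(S(G))\le 2n\sqrt{k}$, and using the regularity identity $nk=2m$ we get $n\sqrt{k}=\sqrt{n\cdot nk}=\sqrt{2mn}$, whence $\mathcal{E}(S(G))\le 2\sqrt{2}\sqrt{mn}$. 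Combining this with Step 1 produces the claimed inequality.

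The routine part will be the incidence-matrix bookkeeping. The step I expect to require the most care is the spectral description in Step 2, namely establishing $BB^{T}=A(G)+kI$ (which genuinely uses $k$-regularity and simplicity) and observing that $\mu_i+k\ge 0$ for all $i$, so the singular values are real and the energy formula is valid. It is worth noting that the weaker McClelland-type bound $\mathcal{E}(H)\le\sqrt{2\,|E(H)|\,|V(H)|}$ applied to $S(G)$ (with $|V(S(G))|=n+m$, $|E(S(G))|=2m$) only yields $2\sqrt{m(n+m)}$, which exceeds $2\sqrt{2mn}$ once $k>2$; hence the finer spectral structure of subdivision graphs is genuinely needed to reach the stated constant.
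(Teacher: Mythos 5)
Your proposal is correct, and its skeleton coincides with the paper's: both observe that every edge of $S(G)$ joins a degree-$k$ vertex to a degree-$2$ vertex, so $\mathcal{S}_p(S(G))=(2^p+k^p)^{1/p}\bigl(\begin{smallmatrix}0&Q\\ Q^{T}&0\end{smallmatrix}\bigr)$ with $Q$ the incidence matrix, reducing everything to bounding the energy of that block matrix by $2\sqrt{2}\sqrt{mn}$. Where you differ is in how that last bound is obtained. The paper simply asserts ``we know that $\sum_{i}|\eta_i|\le 2\sqrt{2}\sqrt{mn}$'' without proof; the implicit justification is the rank-refined McClelland argument, namely that the block matrix has at most $2\,\mathrm{rank}(Q)\le 2n$ nonzero eigenvalues while $\sum_i\eta_i^2=2\,\mathrm{tr}(QQ^{T})=4m$, so Cauchy--Schwarz over the nonzero eigenvalues gives $\sqrt{2n\cdot 4m}=2\sqrt{2mn}$ (a bound valid for arbitrary $G$, regularity not needed at this step). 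You instead compute the spectrum of $S(G)$ explicitly via $QQ^{T}=A(G)+kI$, obtaining the nonzero eigenvalues $\pm\sqrt{\mu_i+k}$, and then apply Cauchy--Schwarz to $\sum_{i=1}^{n}\sqrt{\mu_i+k}\le\sqrt{n\cdot nk}=\sqrt{2mn}$. Your route is more self-contained and makes transparent exactly where $k$-regularity enters (both in the edge weights and in $QQ^{T}=A+kI$), and your closing remark correctly identifies why the naive McClelland bound $2\sqrt{m(n+m)}$ is too weak for $k>2$; the paper's (unpacked) route is marginally more general because the rank argument does not require regularity. Both yield the same constant, so the proposal stands as a complete and valid proof.
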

\begin{proof}
Let $Q(G)$ be the incident matrix of graph $G$. Then the $p$-Sombor matrix $S_{p}(S(G))$ can be written as
\begin{equation}
S_{p}(S(G))= \left(
   \begin{array}{cc}
    0I_{n\times n} & (2^{p}+k^{p})^{\frac{1}{p}}Q(G) \\
    (2^{p}+k^{p})^{\frac{1}{p}}Q^{T}(G) & 0I_{m\times m}    \notag
   \end{array}
\right).
\end{equation}
Suppose $\eta_{1}\geq \eta_{2}\geq \cdots \geq \eta_{n+m}$ are the eigenvalues of the matrix $\bigl( \begin{smallmatrix} 0I_{n\times n} & Q(G) \\ Q^{T}(G) & 0I_{m\times m} \end{smallmatrix} \bigr)$,
then we know that $\sum\limits_{i=1}^{n+m}|\eta_{i}|\leq 2\sqrt{2}\sqrt{mn}$.
Thus $ S_{p}E(S(G))\leq 2\sqrt{2}\sqrt{mn} (2^{p}+k^{p})^{\frac{1}{p}}.$
\end{proof}

\section{Nordhaus-Gaddum-type results for $p$-Sombor \\ spectral radius and energy}

\begin{lemma}\label{l5-1}\cite{hojo1990}
Let $A=(a_{ij}), B=(b_{ij})$ be symmetric, non-negative matrices of order $n$.
If $A\geq B$, i.e., $a_{ij}\geq b_{ij}$ for all $i,j$, then $\mu_{1}(A)\geq \mu_{1}(B)$.
\end{lemma}

\begin{lemma}\label{l5-2}\cite{smit1970}
Let $G$ be a connected graph with $|V(G)|=n$. Then $G$ has one positive eigenvalue in its adjacent spectrum if and only if
$G$ is a complete multipartite graph.
\end{lemma}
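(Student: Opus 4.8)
The plan is to prove the two implications separately, since the statement is an equivalence. Write $A=A(G)$ and let $n_{+}(G)$ denote the number of positive adjacency eigenvalues. For a connected graph there is at least one edge, so $\operatorname{tr}(A^2)>0=\operatorname{tr}(A)$ forces $\lambda_1>0$; thus for connected $G$ the phrase ``exactly one positive eigenvalue'' is equivalent to $\lambda_2(G)\le 0$, and this is the condition I will track.

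For the \emph{if} direction, suppose $G=K_{n_1,\dots,n_k}$ with $k\ge 2$ and parts $V_1,\dots,V_k$. The key observation is that $A=J-\bigoplus_{i=1}^{k}J_{n_i}$, where $J$ is the all-ones matrix of order $n$ and $J_{n_i}$ is the all-ones block on $V_i$; indeed two vertices are adjacent precisely when they lie in different parts. Hence for $x\in\mathbb{R}^{n}$, writing $s_i=\sum_{v\in V_i}x_v$ for the part-sums, one gets $x^{T}Ax=\left(\sum_{i}s_i\right)^2-\sum_i s_i^2=2\sum_{i<j}s_is_j$. This quadratic form factors through the surjective linear map $x\mapsto(s_1,\dots,s_k)$, so by Sylvester's law of inertia its positive index on $\mathbb{R}^{n}$ equals that of the $k$-variable form with matrix $J_k-I_k$, whose eigenvalues are $k-1$ (simple) and $-1$ (multiplicity $k-1$). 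Therefore $n_{+}(G)=1$, as required.

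For the \emph{only if} direction I would argue by contraposition, using the combinatorial fact that a graph is complete multipartite if and only if it has no induced $\overline{P_3}=K_1\cup K_2$ (equivalently, non-adjacency is transitive, so $\overline{G}$ is a disjoint union of cliques). Assume $G$ is connected but not complete multipartite, and fix an induced $K_1\cup K_2$ on vertices $u\sim w$, $v\not\sim u$, $v\not\sim w$. By connectivity, set $\ell=\min\{d(v,u),d(v,w)\}\ge 2$ and take a shortest path realizing it, say to $u$. If $\ell\ge 3$, the first four vertices $v,x_1,x_2,x_3$ of this path induce a $P_4$. If $\ell=2$, with path $v-x_1-u$, then according to whether $x_1\sim w$ the set $\{v,x_1,u,w\}$ induces either a $P_4$ (when $x_1\not\sim w$) or the paw graph (when $x_1\sim w$). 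A direct computation gives $\lambda_2(P_4)=\tfrac{\sqrt5-1}{2}>0$ and $\lambda_2(\text{paw})>0$ (the paw has characteristic polynomial $\lambda^4-4\lambda^2-2\lambda+1$, which is positive at $0$ and negative at $1$, hence has a root in $(0,1)$). By Cauchy interlacing, an induced subgraph $H$ satisfies $\lambda_2(G)\ge\lambda_2(H)$, so in every case $\lambda_2(G)>0$ and $G$ has at least two positive eigenvalues.

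The routine parts are the inertia computation and the two $4$-vertex spectra. The main obstacle I anticipate is the \emph{only if} direction: a single induced $\overline{P_3}$ does not suffice, since $\overline{P_3}$ itself has $\lambda_2=0$, so connectivity must be exploited to grow it into a $4$-vertex graph with a strictly positive second eigenvalue. The case analysis must be organized carefully so that the ``short'' case ($\ell=2$ with the intermediate vertex adjacent to the far endpoint) is correctly recognized as the paw and not overlooked.
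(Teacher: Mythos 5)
The paper does not actually prove this lemma: it is quoted from Smith \cite{smit1970} and used as a black box, so there is no in-paper argument to compare yours against. Your blind proof is correct and self-contained, and it follows the standard route to Smith's theorem. The \emph{if} direction via $A(K_{n_1,\dots,n_k})=J-\bigoplus_{i}J_{n_i}$ and the inertia of the quotient form $s^{T}(J_k-I_k)s$ is sound; the one fact you should make explicit is that a quadratic form pulled back along a surjective linear map has the same positive and negative indices as the form downstairs, which is the precise content you are extracting from Sylvester's law. The \emph{only if} direction also checks out: the forbidden-subgraph characterization (complete multipartite $\Leftrightarrow$ no induced $K_1\cup K_2$), the use of connectivity to grow that configuration into an induced $P_4$ or paw, the two characteristic polynomials, and Cauchy interlacing are all correct; in the paw case, $\phi(0)>0$ and $\phi(1)<0$ give a root in $(0,1)$, and you should add the half-sentence that $\phi(\lambda)\to+\infty$ forces a further root above $1$, so that the root in $(0,1)$ really is a \emph{second} positive eigenvalue. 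The only blemish is degenerate: $K_1$ is connected with no positive eigenvalue, so both the lemma as stated and your opening remark that a connected graph has an edge implicitly assume $n\ge 2$; that is a defect of the quoted statement rather than of your argument.
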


\begin{lemma}\label{l5-3}\cite{caod1998}
Let $G$ be a graph with $|V(G)|=n$, $|E(G)|=m$, the maximum $($resp. minimum$)$ degree $\Delta$ $($resp. $\delta \geq 1$$)$. Then
$\mu_{1}\leq \sqrt{2m-\delta(n-1)+(\delta-1)\Delta}.$
\end{lemma}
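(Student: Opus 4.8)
The plan is to bound $\mu_1^2$ rather than $\mu_1$ directly, exploiting the fact that $A^2 X = \mu_1^2 X$ whenever $X$ is an eigenvector of $A = A(G)$ associated with $\mu_1$. Since $A$ is a nonnegative symmetric matrix, Perron--Frobenius theory supplies a nonnegative eigenvector $X = (x_1, \dots, x_n)^T$ for $\mu_1$; I would let $u$ be a vertex with $x_u = \max_i x_i > 0$.

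First I would record the two arithmetic facts about $A^2$ that drive the estimate: its diagonal entry is $(A^2)_{uu} = d_u$, and its $u$-th row sum is $\sum_{w=1}^n (A^2)_{uw} = \sum_{v \sim u} d_v$ (each walk of length two starting at $u$ passes through a neighbor $v$ of $u$ and then reaches one of the $d_v$ neighbors of $v$). Then, reading off the $u$-th coordinate of $A^2 X = \mu_1^2 X$ and using $x_w \leq x_u$ together with the nonnegativity of the entries of $A^2$, I obtain
$$\mu_1^2 x_u = \sum_{w=1}^n (A^2)_{uw} x_w \leq x_u \sum_{w=1}^n (A^2)_{uw} = x_u \sum_{v \sim u} d_v,$$
so that $\mu_1^2 \leq \sum_{v \sim u} d_v$ after dividing by $x_u > 0$.

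The crux is then to bound the neighbor-degree sum $\sum_{v \sim u} d_v$ by the target quantity. Using $\sum_{v \in V(G)} d_v = 2m$ and partitioning the vertices into $u$ itself, the $d_u$ neighbors of $u$, and the remaining $n - 1 - d_u$ non-neighbors, I would write $\sum_{v \sim u} d_v = 2m - d_u - \sum_{v \not\sim u,\, v \neq u} d_v$. Each of the $n - 1 - d_u$ non-neighbors has degree at least $\delta$, hence $\sum_{v \not\sim u,\, v \neq u} d_v \geq \delta(n - 1 - d_u)$, which yields
$$\sum_{v \sim u} d_v \leq 2m - d_u - \delta(n-1-d_u) = 2m - \delta(n-1) + (\delta-1)d_u.$$
Finally, because $\delta \geq 1$ makes the coefficient $\delta - 1$ nonnegative, I bound $d_u \leq \Delta$ to reach $\mu_1^2 \leq 2m - \delta(n-1) + (\delta-1)\Delta$, and take square roots.

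The step I expect to demand the most care is the passage to $\mu_1^2 \leq \sum_{v \sim u} d_v$: one must justify selecting a \emph{nonnegative} Perron eigenvector (so that $x_w \leq x_u$ may be played against the nonnegative entries of $A^2$) even when $G$ is disconnected, and must secure $x_u > 0$ to permit the final division. Once the reduction to $\mu_1^2 \leq \sum_{v \sim u} d_v$ is in hand, the remainder is an elementary degree count in which the hypothesis $\delta \geq 1$ enters exactly once, namely to guarantee $(\delta-1)d_u \leq (\delta-1)\Delta$.
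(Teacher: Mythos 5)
The paper does not prove this lemma; it is quoted from Cao's 1998 article without argument, so there is no in-paper proof to compare against. Your proof is correct and self-contained: the reduction $\mu_1^2 \leq \sum_{v\sim u} d_v$ via a nonnegative Perron eigenvector (valid also for disconnected $G$ by working on the component realizing $\mu_1$) is the standard ``maximum 2-degree'' bound, and the subsequent degree count $\sum_{v\sim u} d_v \leq 2m - \delta(n-1) + (\delta-1)d_u$ together with $\delta\geq 1$ is exactly where the stated expression comes from. This is essentially the argument in the cited source, so nothing further is needed.
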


\begin{lemma}\label{l5-4}
Let $G$ be a graph with $|V(G)|=n$, $|E(G)|=m$ and the minimum degree $\delta$. Then
$$\xi_{1}\geq \frac{2^{(1+\frac{1}{p})}\delta m}{n},$$
with equality if and only if $G$ is a regular graph.
\end{lemma}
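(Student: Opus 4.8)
The plan is to bound the $p$-Sombor spectral radius from below via the Rayleigh quotient with the all-ones test vector, exactly in the spirit of the upper-bound argument in Theorem \ref{t2-7}. Let $e=(1,1,\dots,1)^{T}\in R^{n}$. Since $\mathcal{S}_{p}(G)$ is a real symmetric matrix, its largest eigenvalue satisfies $\xi_{1}=\max_{X\neq 0}\frac{X^{T}\mathcal{S}_{p}X}{X^{T}X}\geq \frac{e^{T}\mathcal{S}_{p}e}{e^{T}e}$. The denominator is simply $e^{T}e=n$, so the whole argument reduces to evaluating the numerator $e^{T}\mathcal{S}_{p}e$ and then applying the minimum-degree bound on each matrix entry.

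First I would compute $e^{T}\mathcal{S}_{p}e=\sum_{i=1}^{n}\sum_{j=1}^{n}s_{ij}^{p}=2\,SO_{p}(G)$, using the identity $SO_{p}(G)=\tfrac{1}{2}\sum_{i}\sum_{j}s_{ij}^{p}$ recorded in the introduction. Then I would bound $SO_{p}(G)$ from below using the minimum degree: for every edge $v_{i}v_{j}$ we have $\left((d_{i})^{p}+(d_{j})^{p}\right)^{\frac{1}{p}}\geq (2\delta^{p})^{\frac{1}{p}}=2^{\frac{1}{p}}\delta$, and since there are $m$ edges this gives $SO_{p}(G)\geq 2^{\frac{1}{p}}\delta m$. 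Combining these two facts yields
\begin{align*}
\xi_{1}\geq \frac{e^{T}\mathcal{S}_{p}e}{e^{T}e}=\frac{2\,SO_{p}(G)}{n}\geq \frac{2\cdot 2^{\frac{1}{p}}\delta m}{n}=\frac{2^{(1+\frac{1}{p})}\delta m}{n},
\end{align*}
which is precisely the claimed inequality.

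For the equality characterization, I would trace both inequalities in the chain. The Rayleigh-quotient step is an equality exactly when $e$ is an eigenvector of $\mathcal{S}_{p}$ for $\xi_{1}$, i.e.\ when every row sum of $\mathcal{S}_{p}$ is equal; the entrywise degree bound is an equality exactly when $d_{i}=d_{j}=\delta$ for every edge $v_{i}v_{j}$, i.e.\ when $G$ is $\delta$-regular. A regular graph makes both conditions hold simultaneously (for a $\delta$-regular graph, $\mathcal{S}_{p}=2^{\frac{1}{p}}\delta A(G)$ has constant row sums), so equality holds iff $G$ is regular. The only mild subtlety — and the one step I would state carefully rather than gloss over — is verifying that regularity is both necessary and sufficient for the two equality conditions to coincide, which is immediate since $\delta$-regularity forces every degree to equal $\delta$ and hence forces the row sums of $\mathcal{S}_{p}$ to be constant. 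There is no serious obstacle here; the lemma is a direct variational estimate, and the main care is bookkeeping the equality cases consistently with the paper's convention that connectedness or at least the presence of edges ($m\geq 1$) is implicit in the statement.
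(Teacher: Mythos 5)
Your proof is correct and follows essentially the same route as the paper: the paper also takes the Rayleigh quotient with the normalized all-ones vector and bounds each entry $\left((d_{i})^{p}+(d_{j})^{p}\right)^{\frac{1}{p}}\geq 2^{\frac{1}{p}}\delta$, merely writing $2\sum_{i\sim j}s_{ij}^{p}x_{i}x_{j}$ directly instead of routing through $SO_{p}(G)$. Your discussion of the equality case is in fact more careful than the paper's, which asserts it without argument.
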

\begin{proof}
Suppose that $X=(x_{1},x_{2},\cdots, x_{n})^{T}$ be a unit vector. Then
$\xi_{1}\geq X^{T}\mathcal{S}_{p}X=2\sum\limits_{i\sim j}((d_{i})^{p}+(d_{j})^{p})^{\frac{1}{p}}x_{i}x_{j}\geq 2^{(1+\frac{1}{p})}\delta \sum\limits_{i\sim j}x_{i}x_{j}$. Let $X=(\frac{1}{\sqrt{n}}, \frac{1}{\sqrt{n}}, \cdots, \frac{1}{\sqrt{n}})^{T}$, then $\xi_{1}\geq \frac{2^{(1+\frac{1}{p})}\delta m}{n}$.
\end{proof}

\begin{theorem}\label{l5-5}
Let $G$ be a graph with $|V(G)|=n$, $|E(G)|=m$, the maximum $($resp. minimum$)$ degree $\Delta$ $($resp. $\delta \geq 1$$)$. Then
$$\xi_{1}\leq 2^{\frac{1}{p}}\Delta\sqrt{2m-\delta(n-1)+(\delta-1)\Delta},$$
with equality if and only if $G$ is a regular graph.
\end{theorem}
\begin{proof}
By Lemmas \ref{l5-1} and \ref{l5-3}, we have
$$\xi_{1}\leq (\Delta^{p}+\Delta^{p})^{\frac{1}{p}}\mu_{1}\leq 2^{\frac{1}{p}}\Delta\sqrt{2m-\delta(n-1)+(\delta-1)\Delta},$$
with equality if and only if $G$ is a regular graph.
\end{proof}

\begin{theorem}\label{l5-6}
Let $G$ be a graph with $|V(G)|=n$, $|E(G)|=m$. Then
$$S_{p}E(G)\geq \frac{2^{(2+\frac{1}{p})}\delta m}{n},$$
with equality if and only if $G\cong nK_{1}$ or $G$ is a regular complete multipartite graph.
\end{theorem}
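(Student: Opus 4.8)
The plan is to reduce this energy lower bound to the spectral-radius bound already established in Lemma \ref{l5-4}, exploiting the fact that the $p$-Sombor matrix is traceless. First I would record that, by Lemma \ref{l2-1}(2), $N_{1}=\mathrm{tr}(\mathcal{S}_{p})=\sum_{i=1}^{n}\xi_{i}=0$, so the sum of the positive $p$-Sombor eigenvalues equals the sum of the absolute values of the negative ones, each being exactly $\tfrac{1}{2}S_{p}E(G)$. Consequently $S_{p}E(G)=2\sum_{\xi_{i}>0}\xi_{i}\geq 2\xi_{1}$, since $\xi_{1}$ is itself a positive eigenvalue whenever $G$ has at least one edge (if $m=0$ the claimed bound is the trivial $S_{p}E(G)\geq 0$). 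Combining this with Lemma \ref{l5-4}, which supplies $\xi_{1}\geq \frac{2^{(1+\frac{1}{p})}\delta m}{n}$, immediately yields $S_{p}E(G)\geq 2\xi_{1}\geq \frac{2^{(2+\frac{1}{p})}\delta m}{n}$, the desired inequality.

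For the equality analysis I would trace back the two inequalities used. Equality in $\xi_{1}\geq \frac{2^{(1+\frac{1}{p})}\delta m}{n}$ forces $G$ to be regular, by the equality clause of Lemma \ref{l5-4}. Equality in $S_{p}E(G)\geq 2\xi_{1}$ forces $\sum_{\xi_{i}>0}\xi_{i}=\xi_{1}$, that is, $\xi_{1}$ is the \emph{only} positive $p$-Sombor eigenvalue. When $G$ is $k$-regular one has $\mathcal{S}_{p}(G)=2^{\frac{1}{p}}k\,A(G)$, so the $p$-Sombor eigenvalues are the positive multiples $2^{\frac{1}{p}}k\,\mu_{i}$ of the adjacency eigenvalues; hence having exactly one positive $p$-Sombor eigenvalue is equivalent to $G$ having exactly one positive adjacency eigenvalue.

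The characterization then follows from Lemma \ref{l5-2}: a connected graph has a single positive adjacency eigenvalue precisely when it is a complete multipartite graph. Intersecting this with the regularity condition, equality holds exactly when $G$ is a regular complete multipartite graph; the complete graph $K_{n}$ is the special case in which every part is a single vertex, which is why it can be listed separately. The step I expect to require the most care is the equality discussion: one must argue cleanly that $S_{p}E(G)=2\xi_{1}$ is equivalent to the single-positive-eigenvalue condition, then pass from the $p$-Sombor spectrum to the adjacency spectrum through $\mathcal{S}_{p}=2^{\frac{1}{p}}kA$ (an identity valid only because $G$ is regular), and finally invoke Smith's theorem while respecting the connectivity hypothesis it needs.
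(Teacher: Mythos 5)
Your proposal is correct and follows essentially the same route as the paper: the traceless identity $S_{p}E(G)=2\sum_{\xi_{i}>0}\xi_{i}\geq 2\xi_{1}$ combined with Lemma \ref{l5-4}, with the equality case settled via regularity and Lemma \ref{l5-2}. Your equality discussion is in fact more carefully spelled out than the paper's (which states the condition only tersely, and is itself inconsistent between $K_{n}$ in the statement and $nK_{1}$ in the proof).
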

\begin{proof}
By Lemmas \ref{l5-2} and \ref{l5-4}, we have
$$S_{p}E(G)=\sum\limits_{i=1}^{n}|\xi_{i}|=2\sum\limits_{i=1,\xi_{i}>0}^{n}\xi_{i}\geq 2\xi_{1}\geq \frac{2^{(2+\frac{1}{p})}\delta m}{n},$$
with equality if and only if $G\cong nK_{1}$ or $G$ is a regular complete multipartite graph.
\end{proof}

\begin{theorem}\label{l5-7}
Let $G$ be a graph with $|V(G)|=n$, $|E(G)|=m$, the maximum $($resp. minimum$)$ degree $\Delta$ $($resp. $\delta \geq 1$$)$. Then
$$S_{p}E(G)\leq a+\sqrt{(n-1)\left(2^{(1+\frac{2}{p})}m\Delta^{2}-a^{2}\right)},$$
where $a=\max\left\{ \frac{2^{(1+\frac{1}{p})\delta m}}{n}, \Delta \sqrt{2^{(1+\frac{2}{p})}\frac{m}{n}} \right\}$.
\end{theorem}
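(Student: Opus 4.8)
The plan is to read the right-hand side as the value of a single unimodal auxiliary function, and then combine the Koolen--Moulton (Cauchy--Schwarz) energy estimate with the spectral-radius lower bound of Lemma \ref{l5-4}. First I would record the routine estimate
$$N_{2}=2\sum_{\substack{i\sim j\\ 1\leq i,j\leq n}}\left((d_{i})^{p}+(d_{j})^{p}\right)^{\frac{2}{p}}\leq 2m\left(2\Delta^{p}\right)^{\frac{2}{p}}=2^{(1+\frac{2}{p})}m\Delta^{2},$$
obtained by bounding each summand by $(2\Delta^{p})^{2/p}=2^{2/p}\Delta^{2}$. Writing $B=2^{(1+\frac{2}{p})}m\Delta^{2}$ and noting $\xi_{1}=|\xi_{1}|>0$ by the Perron--Frobenius theorem, I would apply Cauchy--Schwarz to $\xi_{2},\dots,\xi_{n}$ together with $\sum_{i=1}^{n}\xi_{i}^{2}=N_{2}$ to get
$$S_{p}E(G)=\xi_{1}+\sum_{i=2}^{n}|\xi_{i}|\leq \xi_{1}+\sqrt{(n-1)\left(N_{2}-\xi_{1}^{2}\right)}\leq \xi_{1}+\sqrt{(n-1)\left(B-\xi_{1}^{2}\right)}=G(\xi_{1}),$$
where $G(x)=x+\sqrt{(n-1)(B-x^{2})}$. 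This reduces the whole problem to estimating $G(\xi_{1})$.

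Next I would study $G$ on $[0,\sqrt{B}]$. A direct derivative computation gives $G'(x)=1-\sqrt{n-1}\,x/\sqrt{B-x^{2}}$, which vanishes exactly at $a_{2}:=\sqrt{B/n}=\Delta\sqrt{2^{(1+\frac{2}{p})}m/n}$; thus $G$ increases on $[0,a_{2}]$, decreases on $[a_{2},\sqrt{B}]$, and attains its global maximum $G(a_{2})=\sqrt{nB}$ at the peak $a_{2}$. Note that $a_{2}$ is exactly the second candidate in the definition of $a$, and that Lemma \ref{l5-4} supplies the lower bound $\xi_{1}\geq a_{1}:=\frac{2^{(1+\frac{1}{p})}\delta m}{n}$, the first candidate. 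I would then set $a=\max\{a_{1},a_{2}\}$, so that in particular $a\geq a_{2}$.

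Finally I would conclude by locating $\xi_{1}$ relative to the peak $a_{2}$. If $\xi_{1}\geq a_{2}$, then $\xi_{1}\geq\max\{a_{1},a_{2}\}=a\geq a_{2}$, so $G$ is decreasing on $[a,\xi_{1}]$ and hence $S_{p}E(G)\leq G(\xi_{1})\leq G(a)$. If instead $\xi_{1}<a_{2}$, then $a_{1}\leq\xi_{1}<a_{2}$ forces $a=a_{2}$, and since $a_{2}$ is the global maximiser we again obtain $S_{p}E(G)\leq G(\xi_{1})\leq G(a_{2})=G(a)$. In both cases $S_{p}E(G)\leq a+\sqrt{(n-1)(B-a^{2})}=a+\sqrt{(n-1)\left(2^{(1+\frac{2}{p})}m\Delta^{2}-a^{2}\right)}$, which is the asserted inequality. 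The main obstacle is precisely this dichotomy: the lower bound $a_{1}$ need not push $\xi_{1}$ past the maximiser $a_{2}$, so one cannot simply invoke monotonicity. The role of taking $a=\max\{a_{1},a_{2}\}\geq a_{2}$ is to keep the evaluation on the decreasing branch whenever possible, and otherwise to land on the peak, where the crude estimate $S_{p}E(G)\leq\sqrt{nB}=G(a_{2})$ takes over; everything else is routine.
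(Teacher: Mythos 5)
Your proposal is correct and follows essentially the same route as the paper: bound $N_{2}\leq 2^{(1+\frac{2}{p})}m\Delta^{2}$, apply Cauchy--Schwarz to $\xi_{2},\dots,\xi_{n}$ to get $S_{p}E(G)\leq \xi_{1}+\sqrt{(n-1)(B-\xi_{1}^{2})}$, and then use the unimodality of $f(x)=x+\sqrt{(n-1)(B-x^{2})}$ together with the lower bound $\xi_{1}\geq \frac{2^{(1+\frac{1}{p})}\delta m}{n}$ from Lemma \ref{l5-4}. If anything, your explicit dichotomy on whether $\xi_{1}$ lies past the maximiser $\sqrt{B/n}$ makes the final step clearer than the paper's terse ``by calculating.''
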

\begin{proof}
By Lemma \ref{l5-4}, we have $\xi_{1}\geq \frac{2^{(1+\frac{1}{p})}\delta m}{n}$.
By Cauchy-Schwartz inequality, we have
$$\sum_{i=2}^{n}|\xi_{i}|\leq \sqrt{(n-1)\left(\sum_{i=1}^{n}\xi_{i}^{2}-\xi_{1}^{2}\right)}.$$
On the other hand,
$\sum\limits_{i=1}^{n}\xi_{i}^{2}=tr(\mathcal{S}_{p}^{2})=2\sum\limits_{i\sim j}((d_{i})^{p}+(d_{j})^{p})^{\frac{2}{p}}\leq 2^{(1+\frac{2}{p})}\Delta^{2}m$,
then
$$\sum_{i=2}^{n}|\xi_{i}|\leq \sqrt{(n-1)\left(2^{(1+\frac{2}{p})}m\Delta^{2}-\xi_{1}^{2}\right)}.$$
Thus
$S_{p}E(G)=\sum\limits_{i=1}^{n}|\xi_{i}|\leq \xi_{1}+\sqrt{(n-1)\left(2^{(1+\frac{2}{p})}m\Delta^{2}-\xi_{1}^{2}\right)}$.

Let $f(a)=a+\sqrt{(n-1)\left(2^{(1+\frac{2}{p})}m\Delta^{2}-a^{2}\right)}$, by calculating, we have
$\max \{f(a)\}=f\left(\Delta \sqrt{2^{(1+\frac{2}{p})}\frac{m}{n}}\right)$, for $a\in R$.

Thus $S_{p}E(G)\leq f(a)$, where $a=\max\left\{ \frac{2^{(1+\frac{1}{p})\delta m}}{n}, \Delta \sqrt{2^{(1+\frac{2}{p})}\frac{m}{n}} \right\}$,
$a$ is the spectral radius of graph with $|V(G)|=n$, $|E(G)|=m$, the maximum (resp. minimum) degree $\Delta$ (resp. $\delta \geq 1$).
\end{proof}

\begin{theorem}\label{l5-8}
Let $G$ be a graph with $|V(G)|=n$, $|E(G)|=m$, the maximum $($resp. minimum$)$ degree $\Delta$ $($resp. $\delta$$)$. Then
$$\xi_{1}+\overline{\xi_{1}}\geq \frac{2^{(1+\frac{1}{p})}}{n}\left(m\delta +(n-1-\Delta)\left(\tbinom{n}{2}-m\right)\right),$$
with equality if $G$ is a regular graph.
\end{theorem}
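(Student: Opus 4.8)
The plan is to apply Lemma \ref{l5-4} twice---once to $G$ and once to its complement $\overline{G}$---and then add the two resulting lower bounds. First I would record the elementary complement identities: $\overline{G}$ has $n$ vertices and $\tbinom{n}{2}-m$ edges, and every vertex $v_{i}$ has complement-degree $\overline{d_{i}}=n-1-d_{i}$. In particular the minimum degree of $\overline{G}$ equals $n-1-\Delta$, because $\overline{d_{i}}$ is smallest exactly at those vertices where $d_{i}$ attains the maximum value $\Delta$.

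Applying Lemma \ref{l5-4} to $G$ directly gives
$$\xi_{1}\geq \frac{2^{(1+\frac{1}{p})}\delta m}{n},$$
with equality iff $G$ is regular. Applying the same lemma to $\overline{G}$, now using its minimum degree $n-1-\Delta$ and its edge count $\tbinom{n}{2}-m$ in the roles of $\delta$ and $m$, yields
$$\overline{\xi_{1}}\geq \frac{2^{(1+\frac{1}{p})}(n-1-\Delta)\left(\tbinom{n}{2}-m\right)}{n},$$
with equality iff $\overline{G}$ is regular. Summing these two inequalities reproduces the claimed bound
$$\xi_{1}+\overline{\xi_{1}}\geq \frac{2^{(1+\frac{1}{p})}}{n}\left(m\delta +(n-1-\Delta)\left(\tbinom{n}{2}-m\right)\right).$$

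For the equality characterization I would argue that the sum is tight iff both summand bounds are tight, i.e. iff $G$ and $\overline{G}$ are simultaneously regular. Since a graph is $k$-regular precisely when its complement is $(n-1-k)$-regular, these two conditions are equivalent, and so equality holds iff $G$ is regular. I do not anticipate a genuine obstacle: the only delicate point is the bookkeeping that it is the \emph{maximum} degree $\Delta$ of $G$ (not $\delta$) that governs the \emph{minimum} degree of $\overline{G}$, together with the observation that regularity is preserved under complementation, so the two equality conditions merge into the single condition stated.
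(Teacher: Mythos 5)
Your proposal is correct and follows essentially the same route as the paper: the paper's proof evaluates the Rayleigh quotient of $\mathcal{S}_{p}+\overline{\mathcal{S}_{p}}$ at the normalized all-ones vector and bounds the entries of each matrix by $2^{\frac{1}{p}}\delta$ and $2^{\frac{1}{p}}(n-1-\Delta)$ respectively, which is exactly the argument of Lemma \ref{l5-4} applied simultaneously to $G$ and $\overline{G}$. Your version simply modularizes this by invoking Lemma \ref{l5-4} twice and adding, with the same complement bookkeeping and the same equality analysis.
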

\begin{proof}
Suppose that $X=(x_{1},x_{2},\cdots, x_{n})^{T}$ be a unit vector. Then
\begin{align*}
X^{T}(\mathcal{S}_{p}+\overline{\mathcal{S}_{p}})X
 =& X^{T}\mathcal{S}_{p}X+X^{T}\overline{\mathcal{S}_{p}}X \\
 =& 2\sum_{v_{i}v_{j}\in E(G)}((d_{i})^{p}+(d_{j})^{p})^{\frac{1}{p}}x_{i}x_{j}+ 2\sum_{v_{i}v_{j}\in E(\overline{G})}((\overline{d_{i}})^{p}+(\overline{d_{j}})^{p})^{\frac{1}{p}}x_{i}x_{j}\\
 \geq& 2^{(1+\frac{1}{p})}\delta \sum_{v_{i}v_{j}\in E(G)}x_{i}x_{j}+ 2^{(1+\frac{1}{p})}(n-1-\Delta) \sum_{v_{i}v_{j}\in E(\overline{G})}x_{i}x_{j}.
\end{align*}
Let $X=(\frac{1}{\sqrt{n}},\frac{1}{\sqrt{n}},\cdots, \frac{1}{\sqrt{n}})^{T}$. Then
\begin{align*}
X^{T}(\mathcal{S}_{p}+\overline{\mathcal{S}_{p}})X
 \geq& 2^{(1+\frac{1}{p})}\frac{\delta}{n} \left(\sum_{v_{i}v_{j}\in E(G)}1 \right)+ 2^{(1+\frac{1}{p})}\frac{(n-1-\Delta)}{n} \left(\sum_{v_{i}v_{j}\in E(\overline{G})}1 \right)\\
    =& \frac{2^{(1+\frac{1}{p})}}{n}\left(m\delta +(n-1-\Delta)\left(\tbinom{n}{2}-m \right)\right).
\end{align*}
Thus, $\xi_{1}+\overline{\xi_{1}}\geq \frac{2^{(1+\frac{1}{p})}}{n}\left( m\delta +(n-1-\Delta)\left(\tbinom{n}{2}-m\right) \right)$.
\end{proof}

\begin{theorem}\label{l5-9}
Let $G$ be a connected graph with $|V(G)|=n$, $|E(G)|=m$.

\noindent $(1)$ If $\Delta(G)=n-1$ or $\delta(G)=n-1$, then
\begin{align*}
\xi_{1}+\overline{\xi_{1}}
 \leq& 2^{\frac{1}{p}}\Delta(C_{1})\sqrt{2|E(C_{1})|-\delta(C_{1})(| V(C_{1})|-1-\Delta(C_{1}))-\Delta(C_{1})} \\
 &+ 2^{\frac{1}{p}}(n-1)\sqrt{2m-n+1},
\end{align*}
where $C_{1}$ is the connected component of $\overline{G}$ and $\xi_{1}(C_{1})=\xi_{1}(\overline{G})$.

\noindent $(2)$ If $\Delta(G)\leq n-2$ and $\delta(G)\leq n-2$, then
\begin{align*}
\xi_{1}+\overline{\xi_{1}}
 \leq& 2^{\frac{1}{p}}(n-1-\delta)\sqrt{2\tbinom{n}{2}-2m-(\delta+1)(n-1)+\delta(\Delta+1)} \\
 &+ 2^{\frac{1}{p}}\Delta \sqrt{2m-\delta(n-1)+(\delta-1)\Delta}.
\end{align*}
\end{theorem}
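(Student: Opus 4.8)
The plan is to use Theorem \ref{l5-5}, namely $\xi_1 \le 2^{1/p}\Delta\sqrt{2m-\delta(n-1)+(\delta-1)\Delta}$, as the single engine, applying it once to $G$ and once to the complement (or to the relevant component of the complement). Before anything else I would record the complement parameters $\Delta(\overline G)=n-1-\delta$, $\delta(\overline G)=n-1-\Delta$, and $|E(\overline G)|=\binom{n}{2}-m$, since everything reduces to substituting these into the bound. The hard part will be purely bookkeeping: checking the polynomial identity that $2|E(\overline G)|-\delta(\overline G)(n-1)+(\delta(\overline G)-1)\Delta(\overline G)$ equals $2\binom{n}{2}-2m-(\delta+1)(n-1)+\delta(\Delta+1)$, which is exactly the radicand of the second summand in $(2)$; the leading factor $\Delta(\overline G)=n-1-\delta$ matches the prefactor $2^{1/p}(n-1-\delta)$.

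For part $(2)$, connectedness of $G$ gives $\delta=\delta(G)\ge 1$, so Theorem \ref{l5-5} applies to $G$ and reproduces the first summand verbatim. The hypothesis $\Delta(G)\le n-2$ yields $\delta(\overline G)=n-1-\Delta\ge 1$, so Theorem \ref{l5-5} applies to $\overline G$ as well; after the substitution above it yields the second summand. Adding the two inequalities gives the stated bound on $\xi_1+\overline{\xi_1}$.

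For part $(1)$ I would first take $\Delta(G)=n-1$. Then $G$ has a universal vertex, so $G$ is connected with $\delta\ge 1$, and in Theorem \ref{l5-5} the choice $\Delta=n-1$ collapses the radicand because $-\delta(n-1)+(\delta-1)(n-1)=-(n-1)$, leaving $2m-(n-1)$ and hence $\xi_1(G)\le 2^{1/p}(n-1)\sqrt{2m-n+1}$, the first summand. For $\overline{\xi_1}$, the universal vertex of $G$ is isolated in $\overline G$, so $\overline G$ is disconnected; since $\mathcal S_p$ is block diagonal along components and a vertex keeps its degree inside its own component, $\overline{\xi_1}=\xi_1(\overline G)=\xi_1(C_1)$, where $C_1$ is the nontrivial component carrying the spectral radius and satisfies $\delta(C_1)\ge 1$. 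Applying Theorem \ref{l5-5} to $C_1$ and rewriting $-\delta(C_1)(|V(C_1)|-1)+(\delta(C_1)-1)\Delta(C_1)=-\delta(C_1)(|V(C_1)|-1-\Delta(C_1))-\Delta(C_1)$ gives the second summand, and summing finishes this case. The alternative hypothesis $\delta(G)=n-1$ forces $G\cong K_n$, so $\overline G\cong nK_1$ and $\overline{\xi_1}=0$, while $\xi_1(K_n)=2^{1/p}(n-1)^2=2^{1/p}(n-1)\sqrt{2m-n+1}$ because $2m=n(n-1)$; thus the bound again holds (the component term vanishing), and this subcase is in any case subsumed by $\Delta(G)=n-1$.
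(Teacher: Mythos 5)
Your proposal is correct and follows essentially the same route as the paper: both apply Theorem \ref{l5-5} once to $G$ and once to $\overline{G}$ (or to the component $C_1$ of $\overline{G}$ carrying its spectral radius), with the rest being the substitution $\Delta(\overline G)=n-1-\delta$, $\delta(\overline G)=n-1-\Delta$, $|E(\overline G)|=\binom{n}{2}-m$ and the same radicand rewriting. You are in fact slightly more careful than the paper, e.g.\ in justifying applicability to a possibly disconnected $\overline G$ via $\delta(\overline G)\ge 1$ and in disposing of the degenerate $G\cong K_n$ subcase.
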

\begin{proof}
$(1)$ Let $\Delta(G)=n-1$, $\delta(G)\geq 1$. Then $\overline{G}$ is not a connected graph.
By Lemma \ref{l5-5}, we have $\xi_{1}\leq 2^{\frac{1}{p}}(n-1)\sqrt{2m-\delta(n-1)+(\delta-1)(n-1)}=2^{\frac{1}{p}}(n-1)\sqrt{2m-n+1}$.

Let $C_{1}, C_{2}, \cdots, C_{t}$ be the connected component of $\overline{G}$ and $\xi_{1}(C_{1})\geq \xi_{1}(C_{2})\geq \cdots \geq \xi_{1}(C_{t})$.
Then $\xi_{1}(\overline{G})=\xi_{1}(C_{1})$ and $\Delta(C_{1})\leq \Delta(\overline{G})\leq n-2$. Thus
$$\overline{\xi_{1}}=\xi_{1}(C_{1})\leq 2^{\frac{1}{p}}\Delta(C_{1})\sqrt{2|E(C_{1})|-\delta(C_{1})\left(|V(C_{1})|-1-\Delta(C_{1})\right)-\Delta(C_{1})}.$$

$(2)$  In this case, $G$ and $\overline{G}$ are all connected graphs. By Lemma \ref{l5-5}, we have
$$\xi_{1}\leq 2^{\frac{1}{p}}\Delta\sqrt{2m-\delta(n-1)+(\delta-1)\Delta},$$
\begin{align*}
\overline{\xi_{1}}
 \leq& 2^{\frac{1}{p}}\overline{\Delta}\sqrt{2\tbinom{n}{2}-2m-\overline{\delta}(n-1)+(\overline{\delta}-1)\overline{\Delta}}\\
    =& 2^{\frac{1}{p}}(n-1-\delta)\sqrt{2\tbinom{n}{2}-2m-(\delta+1)(n-1)+\delta(\Delta+1)}.
\end{align*}
The proof is completed.
\end{proof}

\begin{theorem}\label{l5-10}
Let $G$ be a connected graph with $|V(G)|=n$, $|E(G)|=m$, $C_{1}, C_{2}, \cdots, C_{t}$ be the connected component of $\overline{G}$ and $\xi_{1}(C_{1})\geq \xi_{1}(C_{2})\geq \cdots \geq \xi_{1}(C_{t})$. Then
$$S_{p}E(G)+S_{p}E(\overline{G})\geq 2^{(2+\frac{1}{p})} \left( \frac{m\delta}{n}+ \sum_{i=1}^{t}\frac{|E(C_{i})|\left(|V(C_{i})|-1-\Delta(C_{i})\right)}{|V(C_{i})|}  \right),$$
with equality if and only if $G$ is a regular complete multipartite graph.
\end{theorem}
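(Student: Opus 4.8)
The plan is to reduce this two-term lower bound to repeated application of Theorem \ref{l5-6}: apply it once to $G$ itself and once to each connected component of $\overline{G}$, and then add the resulting estimates. So the first thing I would establish is how the $p$-Sombor energy behaves under disjoint unions. Because $\overline{G}$ has no edges joining distinct components, every vertex $v\in V(C_i)$ has the same degree in $\overline{G}$ as it has in the component $C_i$; consequently the off-block entries of $\mathcal{S}_p(\overline{G})$ vanish and $\mathcal{S}_p(\overline{G})$ is block diagonal with diagonal blocks $\mathcal{S}_p(C_1),\dots,\mathcal{S}_p(C_t)$. Hence the spectrum of $\mathcal{S}_p(\overline{G})$ is the union of the spectra of the $\mathcal{S}_p(C_i)$, and therefore $S_pE(\overline{G})=\sum_{i=1}^{t}S_pE(C_i)$. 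This additivity is the one structural fact the argument really needs.

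Next I would apply Theorem \ref{l5-6} to $G$, obtaining $S_pE(G)\ge \frac{2^{(2+\frac1p)}\delta m}{n}$, and to each component $C_i$, obtaining $S_pE(C_i)\ge \frac{2^{(2+\frac1p)}\delta(C_i)|E(C_i)|}{|V(C_i)|}$ (any component that is an isolated vertex contributes $0$ on both sides and can be discarded). The key remaining computation is to identify $\delta(C_i)$. Since $C_i$ is a component of $\overline{G}$, the complement of $C_i$ taken inside $V(C_i)$ is exactly the induced subgraph $G[V(C_i)]$, so each $v\in V(C_i)$ satisfies $d_{C_i}(v)=|V(C_i)|-1-d_{G[V(C_i)]}(v)$; minimizing over $v$ yields $\delta(C_i)=|V(C_i)|-1-\Delta(C_i)$. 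Substituting this identity and summing the component estimates together with the estimate for $G$ produces exactly the claimed inequality.

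For the equality discussion I would use that a sum of lower bounds is tight iff every summand is tight. Equality in the estimate for $G$ forces, by Theorem \ref{l5-6}, that $G$ is a regular complete multipartite graph; conversely, if $G\cong K_{s,\dots,s}$ then $\overline{G}$ is a disjoint union of equal cliques $K_s$, each of which is regular and complete multipartite, so Theorem \ref{l5-6} is tight on every component as well. Thus equality holds throughout iff $G$ is a regular complete multipartite graph.

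I expect the main obstacle to be pinning down $\delta(C_i)$ correctly: the degrees relevant to $\mathcal{S}_p(C_i)$ are the degrees in $\overline{G}$, not in $G$, and one must carefully translate between a vertex's $\overline{G}$-degree inside a component and its degree in the induced subgraph $G[V(C_i)]$ in order to recover the quantity $|V(C_i)|-1-\Delta(C_i)$ that appears in the statement.
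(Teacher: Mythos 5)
Your proposal is correct and follows essentially the same route as the paper: the paper chains $S_{p}E\geq 2\xi_{1}$ (via Lemma \ref{l5-2}) with the Rayleigh-quotient bound $\xi_{1}\geq 2^{(1+\frac{1}{p})}\delta m/n$ of Lemma \ref{l5-4}, applied to $G$ and to each component of $\overline{G}$, which is exactly the content of Theorem \ref{l5-6} that you invoke. The only difference is that you make explicit two steps the paper leaves implicit --- the block-diagonal additivity $S_{p}E(\overline{G})=\sum_{i}S_{p}E(C_{i})$ and the degree identity $\delta(C_{i})=|V(C_{i})|-1-\Delta(C_{i})$ --- which is a welcome clarification rather than a departure.
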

\begin{proof}
By Lemma \ref{l5-4}, $\xi_{1}\geq \frac{2^{(1+\frac{1}{p})}\delta m}{n}$, with equality iff $G$ is a regular graph.
\begin{align*}
S_{p}E(G)+S_{p}E(\overline{G})
    =& \sum_{i=1}^{n}|\xi_{i}|+ \sum_{i=1}^{n}|\overline{\xi_{i}}|\\
    \geq& 2\xi_{1}+2\sum_{i=1}^{t}\xi_{1}(C_{i})\\
    \geq& 2^{(2+\frac{1}{p})}  \frac{m\delta}{n}+ 2^{(2+\frac{1}{p})}\sum_{i=1}^{t}\frac{|E(C_{i})|\left(|V(C_{i})|-1-\Delta(C_{i})\right)}{|V(C_{i})|}.
\end{align*}
By Lemma \ref{l5-2} and Lemma \ref{l5-4}, we have equality holds if and only if $G$ is a regular complete multipartite graph.
\end{proof}

\section{Regression model for boiling point and some other invariants}
\hskip 0.6cm
In this section, we first investigate the relationship between boiling points (BP) of benzenoid hydrocarbons and Sombor spectral radius, Sombor energy.
Then we consider the relationship between AcenFac (resp. Entropy, SNar, HNar) of octane isomers and Sombor spectral radius, Sombor energy.
The 21 benzenoid hydrocarbons and 18 octane isomers are shown in Figure \ref{fig-2} and Figure \ref{fig-3}.
The experimental values of boiling points of benzenoid hydrocarbons of Table 1 are taken from \cite{mila2021}.
The experimental values of AcenFac (resp. Entropy, SNar, HNar) of octane isomers of Table 2 are taken from \cite{mila2021} and \cite{dengt2021}.
With the data of Table \ref{table1}, scatter plots between BP and Sombor spectral radius, Sombor energy are shown in Figure \ref{fig-4}. We get the mathematical relationship-related coefficient ($R$) between boiling points and Sombor energy (resp. Sombor spectral radius) is about 0.9950 (resp. 0.8936), and
$$BP= 4.658\times SE(G)+31.24,\quad  BP= 134.6\times \xi_{1}(G)-844,$$
where $SE(G)$, $\xi_{1}(G)$ are the Sombor energy, Sombor spectral radius of $G$, respectively.

On the other hand, with the data of Table \ref{table2}, scatter plots between AcenFac (resp. Entropy, SNar, HNar) of octane isomers and Sombor spectral radius, Sombor energy are shown in Figure \ref{fig-5} and \ref{fig-6}. And
$$AcenFac= -0.02465\times \xi_{1}(G)+0.5263,\quad  Entropy= -2.978\times \xi_{1}(G)+128.4,$$
$$SNar= -0.2231\times \xi_{1}(G)+5.256,\quad  HNar= -0.05843\times \xi_{1}(G)+1.86,$$
$$AcenFac= -0.021\times SE(G)+0.9109,\quad  Entropy= -2.565\times SE(G)+175.7,$$
$$SNar= -0.19\times SE(G)+8.735,\quad  HNar= -0.04981\times SE(G)+2.772.$$

From \cite{wlwl2020}, we know that the correction coefficient $(R)$ between boiling ponits of benzenoid hydrocarbons and eccentricity spectral radius is 0.7167, we compare the correction coefficient of Sombor spectral radius, Sombor energy with other spectral radius, energy, we find the Sombor energy (resp. Sombor spectral radius) is also a better predictor. The boiling points and Sombor energy is highly correlated since the correction coefficient (R) between boiling ponits of benzenoid hydrocarbons of Sombor energy is 0.9950. This points to the applicability of Sombor energy in QSPR analysis.

\begin{figure}[ht!]
  \centering
  \scalebox{.15}[.15]{\includegraphics{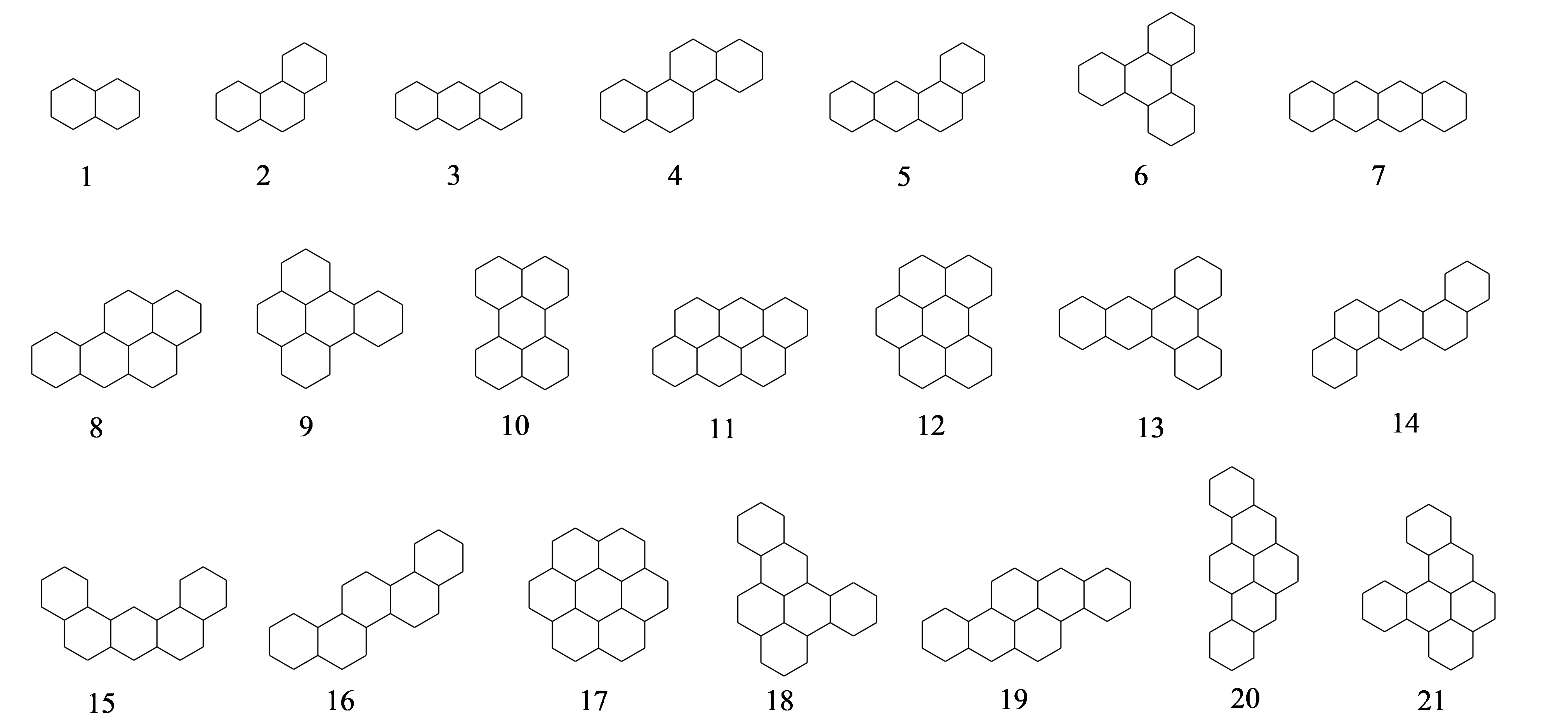}}
  \caption{21 benzenoid hydrocarbons.}
 \label{fig-2}
\end{figure}

\begin{figure}[ht!]
  \centering
  \scalebox{.16}[.16]{\includegraphics{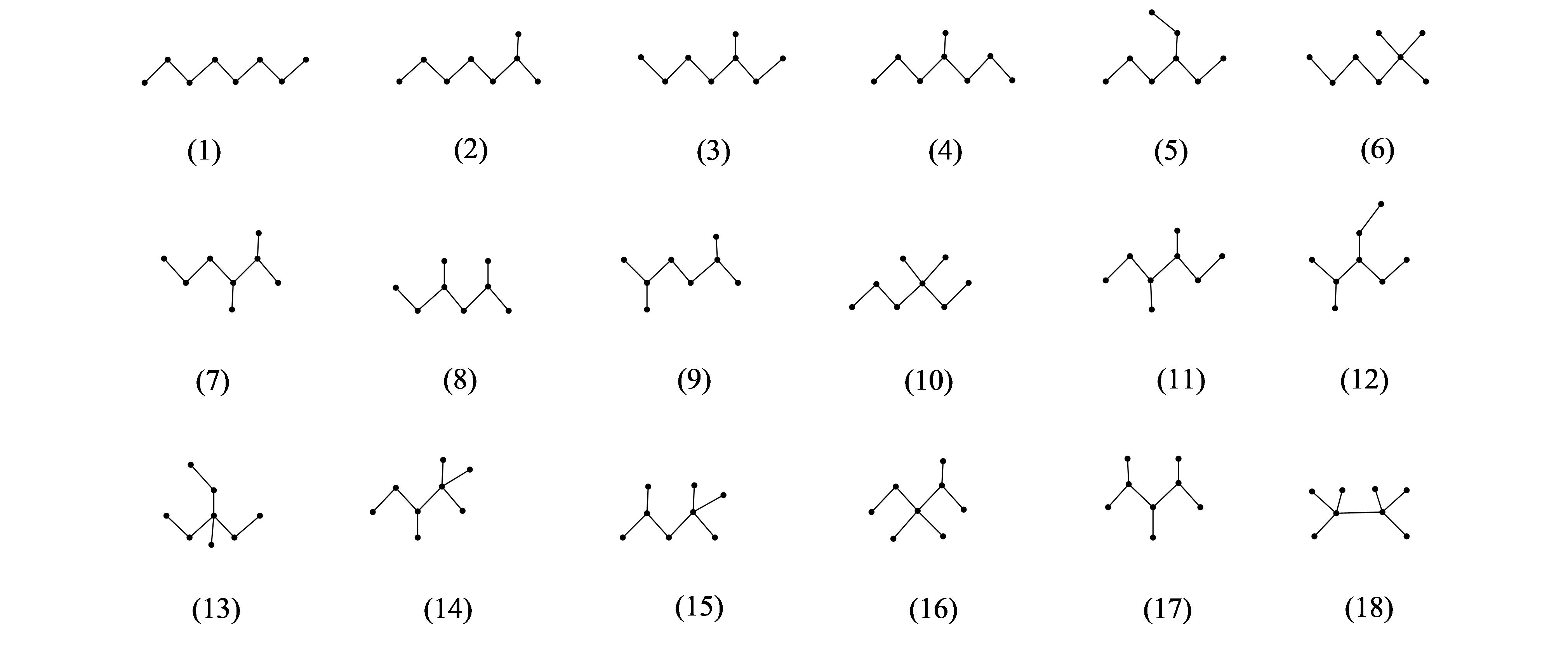}}
  \caption{18 octane isomers.}
 \label{fig-3}
\end{figure}

\begin{table}[h]
	\centering
    \caption{Experimental values of BP and Sombor spectral redius ($\xi_{1}(G)$), Sombor energy ($SE(G)$) of $21$ benzenoid hydrocarbons}
    \vskip 0.2cm
    \setlength{\tabcolsep}{3mm}{
	\begin{tabular}{|c|ccc|c|ccc|}\hline
	  No.  &  $BP(^{o}C)$  & $\xi_{1}(G)$  &	$SE(G)$ &  No.   &  $BP(^{o}C)$  & $\xi_{1}(G)$  &	$SE(G)$     \\  \hline

	$1$    &  $218$  &    $8.1882$  &    $44.1700$   &  $12$   &  $542$  &    $10.6216$  &  $112.3270$         \\  \hline

    $2$    &  $338$  &    $9.1702$  &	  $64.6146$   &    $13$   &  $535$  &    $10.0690$  &  $106.5480$      \\ \hline

    $3$    &  $340$  &    $8.8866$  &	  $64.5134$   &   $14$   &  $536$  &    $9.6476$  &	  $106.5630$     \\ \hline

    $4$    &  $431$  &    $9.6008$  &	  $85.5238$   &  $15$   &  $531$  &    $9.6571$  &	  $106.5576$       \\ \hline

    $5$    &  $425$  &    $9.4256$  &	  $85.5510$  &   $16$   &  $519$  &    $9.8174$  &	  $106.4584$      \\ \hline

    $6$    &  $429$  &    $9.9568$  &	  $85.5694$  &  $17$   &  $590$  &    $10.8507$  &	  $125.8866$      \\ \hline

    $7$    &  $440$  &    $9.1885$  &	  $85.3488$  &  $18$   &  $592$  &    $10.5141$  &	  $119.8038$    \\ \hline

    $8$    &  $496$  &    $10.1336$  &	  $98.8062$  &  $19$   &  $596$  &    $10.3127$  &	  $119.6088$     \\ \hline

    $9$    &  $493$  &    $10.3400$  &	  $98.9208$  &  $20$   &  $594$  &    $10.3144$  &	  $119.7024$    \\ \hline

    $10$   &  $497$  &    $10.2712$  &	  $98.6830$  &  $21$   &  $595$  &    $10.5782$  &	  $119.6806$    \\ \hline

    $11$   &  $547$  &    $10.4807$  &	  $112.0996$       \\ \hline

	\end{tabular}}
	
	\label{table1}
\end{table}

\begin{table}[h]
	\centering
    \caption{Experimental values of AcenFac (resp. Entropy, SNar, HNar), $\xi_{1}(G)$ and $SE(G)$ of $18$ octane isomers}
    \vskip 0.2cm
    \setlength{\tabcolsep}{3mm}{
	\begin{tabular}{c|cccccc}\hline
	  No.     &  $AcenFac$  & $Entropy$  &	$SNar$  &	$HNar$   &	$\xi_{1}(G)$  &	$SE(G)$  \\  \hline

	$1$    &  $0.397898$  &    $111.67$  &    $4.159$  &  $1.6$  &    $5.2207$  &    $24.9204$  \\  \hline

    $2$    &  $0.377916$  &    $109.84$  &	  $3.871$  &  $1.5$  &    $6.1862$  &    $25.4234$ \\ \hline

    $3$    &  $0.371002$  &    $111.26$  &	  $3.871$  &  $1.5$  &    $6.4763$  &    $26.4694$ \\ \hline

    $4$    &  $0.371504$  &    $109.32$  &	  $3.871$  &  $1.5$  &    $6.5533$  &    $25.2388$ \\ \hline

    $5$    &  $0.362472$  &    $109.43$  &	  $3.871$  &  $1.5$  &    $6.7397$  &    $26.1798$ \\ \hline

    $6$    &  $0.339426$  &    $103.42$  &	  $3.466$  &  $1.391$  &    $8.5905$ &    $27.8990$ \\ \hline

    $7$    &  $0.348247$  &    $108.02$  &	  $3.584$  &  $1.412$  &    $7.3547$ &    $26.8572$ \\ \hline

    $8$    &  $0.344223$  &    $106.98$  &	  $3.584$  &  $1.412$  &    $6.9600$ &    $26.7522$ \\ \hline

    $9$    &  $0.356830$  &    $105.72$  &	  $3.584$  &  $1.412$  &    $6.4167$ &    $25.9628$ \\ \hline

    $10$   &  $0.322596$  &    $104.74$  &	  $3.466$  &  $1.391$  &    $8.8227$ &    $27.8358$ \\ \hline

    $11$   &  $0.340345$  &    $106.59$  &	  $3.584$  &  $1.412$  &    $7.4854$ &    $27.9238$ \\ \hline

    $12$   &  $0.332433$  &    $106.06$  &	  $3.584$  &  $1.412$  &    $7.5110$ &    $26.5894$ \\ \hline

    $13$   &  $0.306899$  &    $101.48$  &	  $3.466$  &  $1.391$  &    $8.9972$ &    $28.9882$ \\ \hline

    $14$   &  $0.300816$  &    $101.31$  &	  $3.178$  &  $1.315$  &    $9.2535$ &    $29.3306$ \\ \hline

    $15$   &  $0.305370$  &    $104.09$  &	  $3.178$  &  $1.315$  &    $8.7704$ &    $27.9486$ \\ \hline

    $16$   &  $0.293177$  &    $102.06$  &	  $3.178$  &  $1.315$  &    $9.3598$ &    $29.4626$ \\ \hline

    $17$   &  $0.317422$  &    $102.39$  &	  $3.296$  &  $1.333$  &    $7.9257$ &    $28.3642$ \\ \hline

    $18$   &  $0.255294$  &    $93.06$  &	  $2.773$  &  $1.231$  &    $10.5096$ &    $30.7246$ \\ \hline

	\end{tabular}}
	
	\label{table2}
\end{table}

\begin{figure}[ht!]
  \centering
  \scalebox{.2}[.3]{\includegraphics{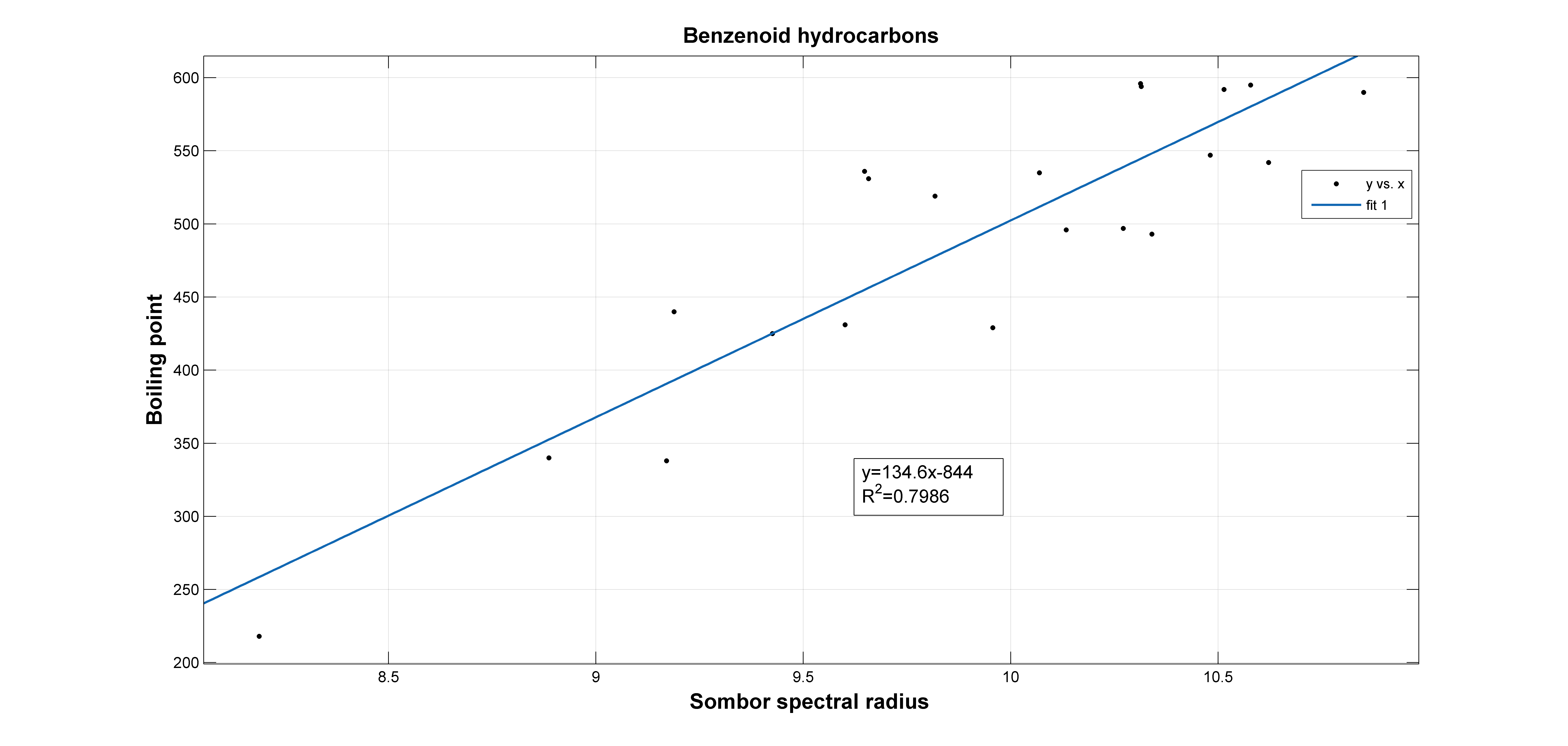}}
  \scalebox{.2}[.3]{\includegraphics{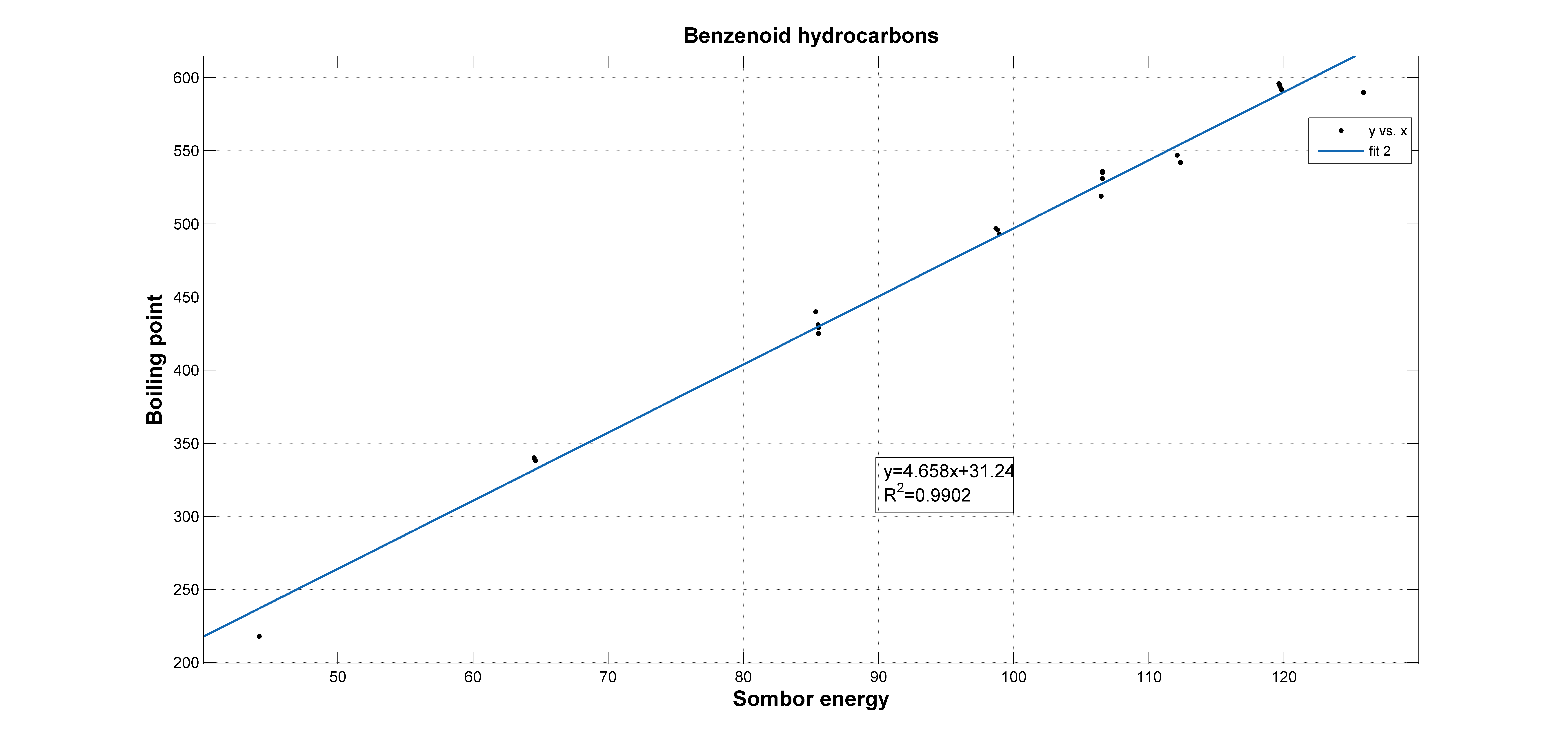}}
  \caption{Scatter plot between BP of benzenoid hydrocarbons and $\xi_{1}(G)$ (resp. $SE(G)$).}
 \label{fig-4}
\end{figure}

\begin{figure}[ht!]
  \centering
  \scalebox{.2}[.3]{\includegraphics{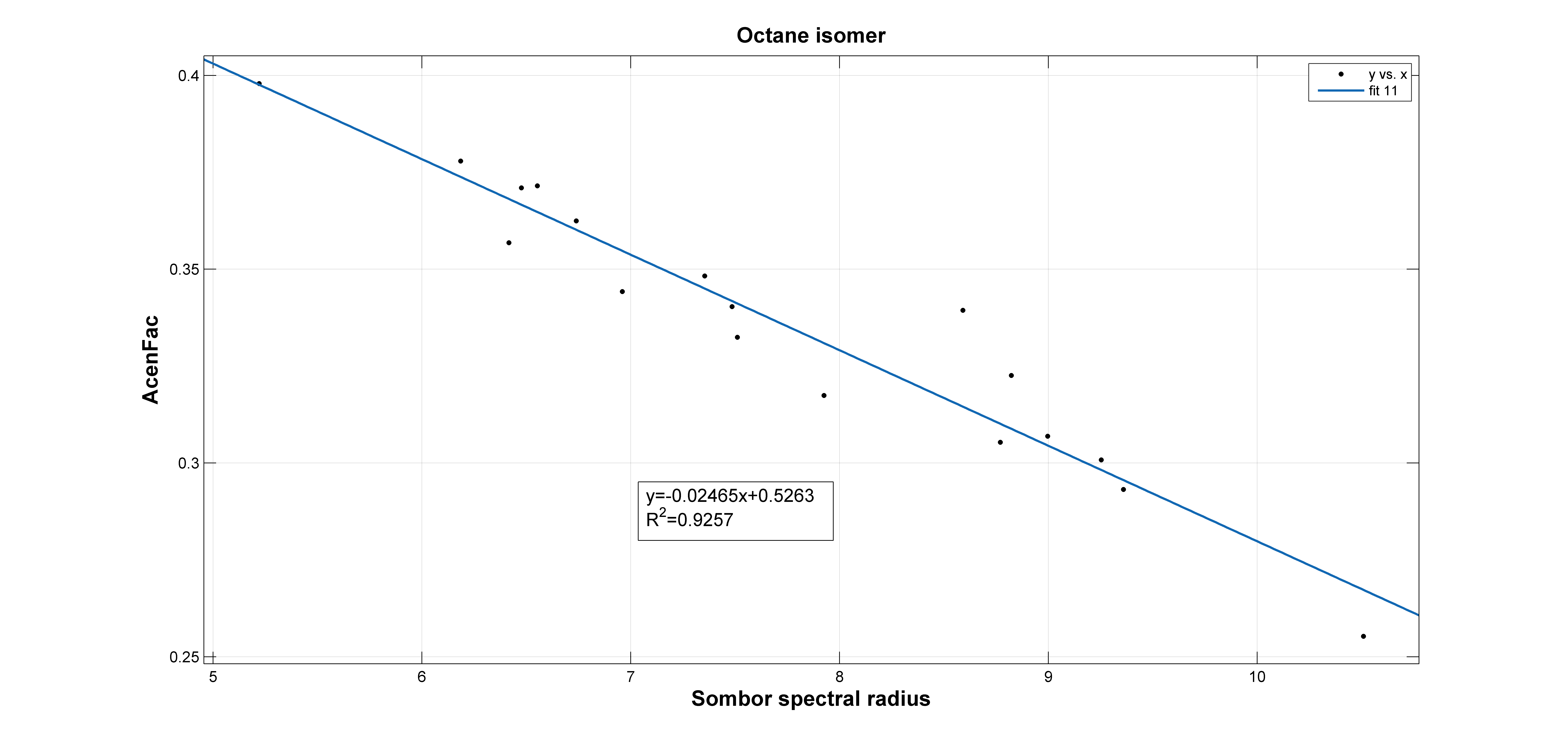}}
  \scalebox{.2}[.3]{\includegraphics{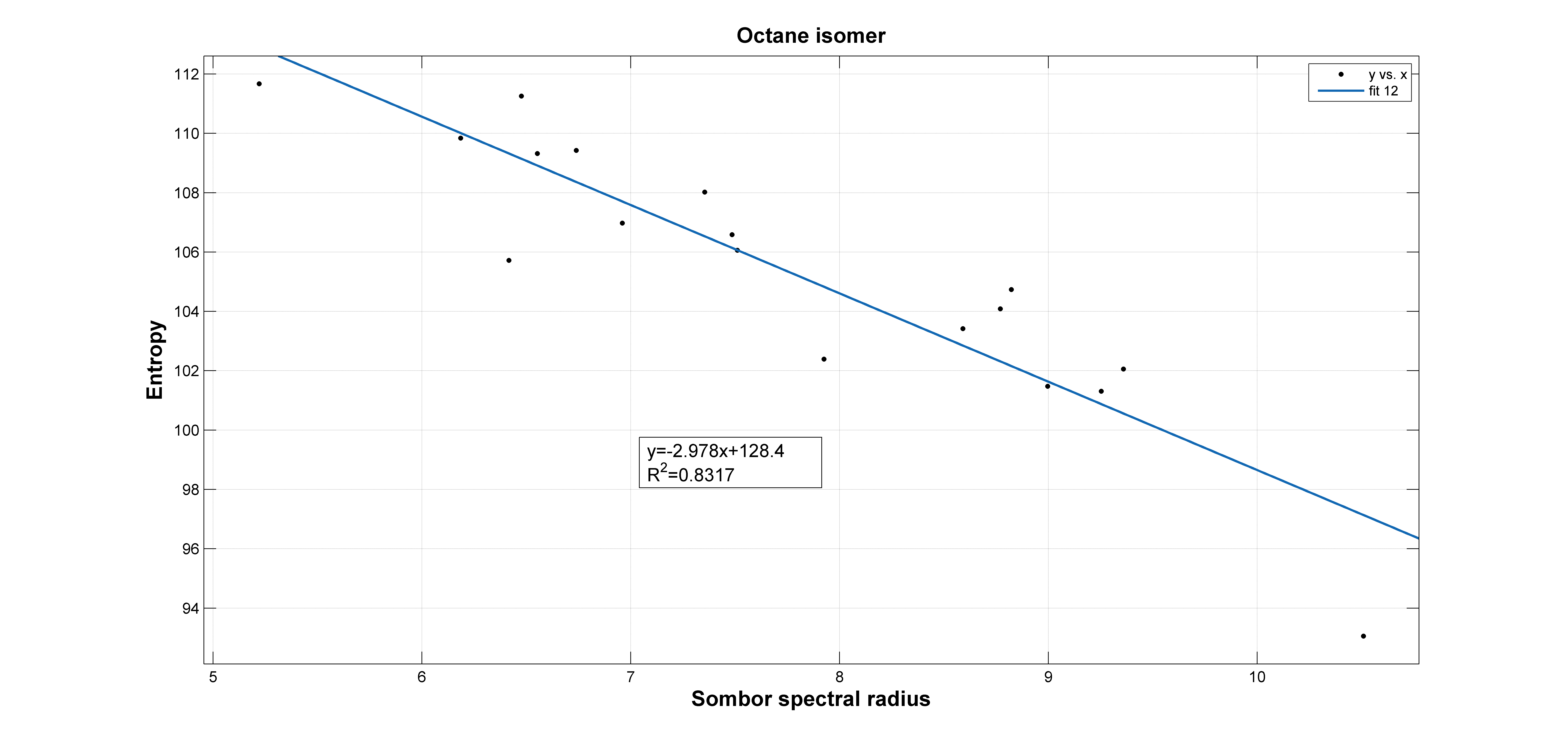}}
  \scalebox{.2}[.3]{\includegraphics{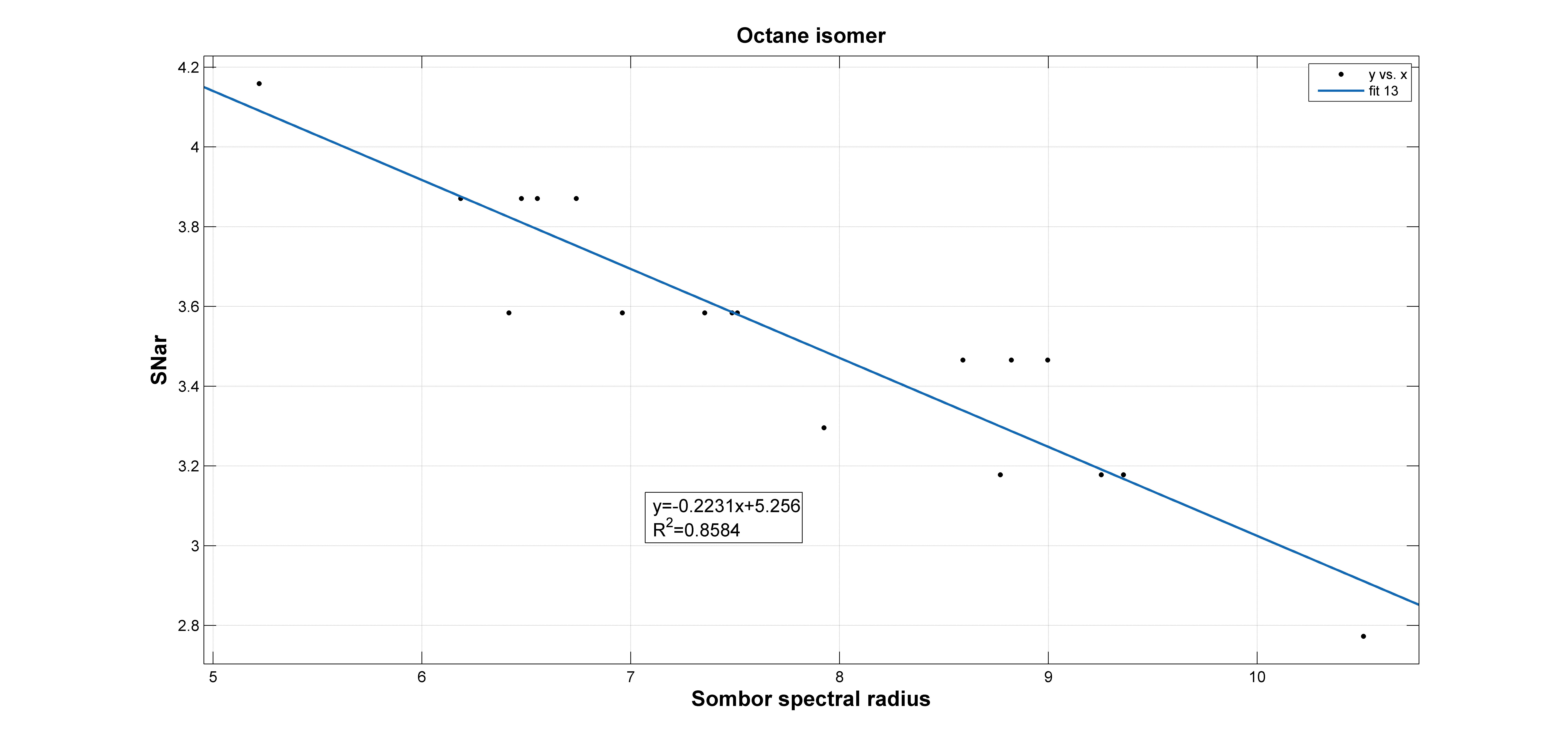}}
  \scalebox{.2}[.3]{\includegraphics{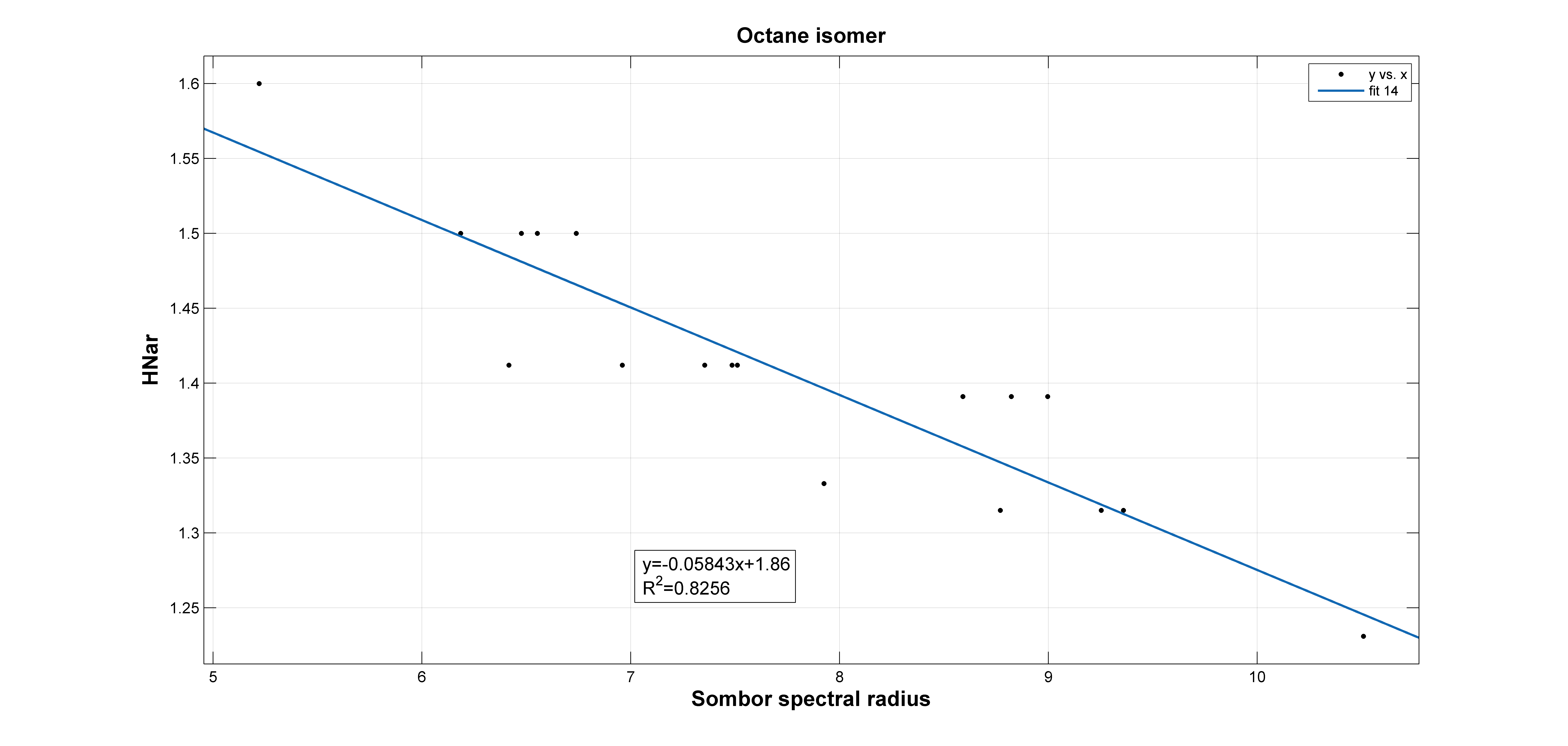}}
  \caption{Scatter plot between AcenFac (resp. Entropy, SNar, HNar) of octane isomers and $\xi_{1}(G)$.}
 \label{fig-5}
\end{figure}

\begin{figure}[ht!]
  \centering
  \scalebox{.2}[.3]{\includegraphics{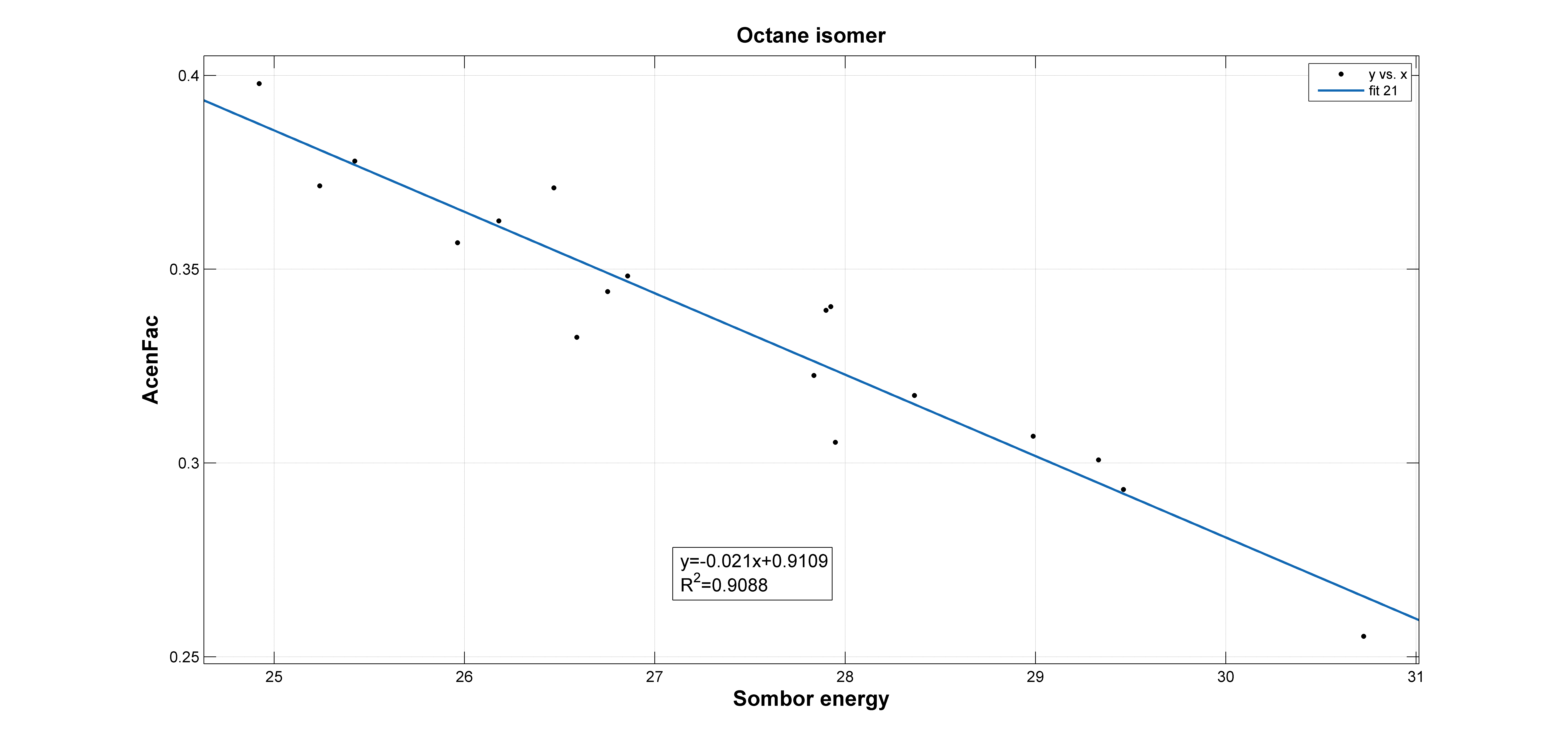}}
  \scalebox{.2}[.3]{\includegraphics{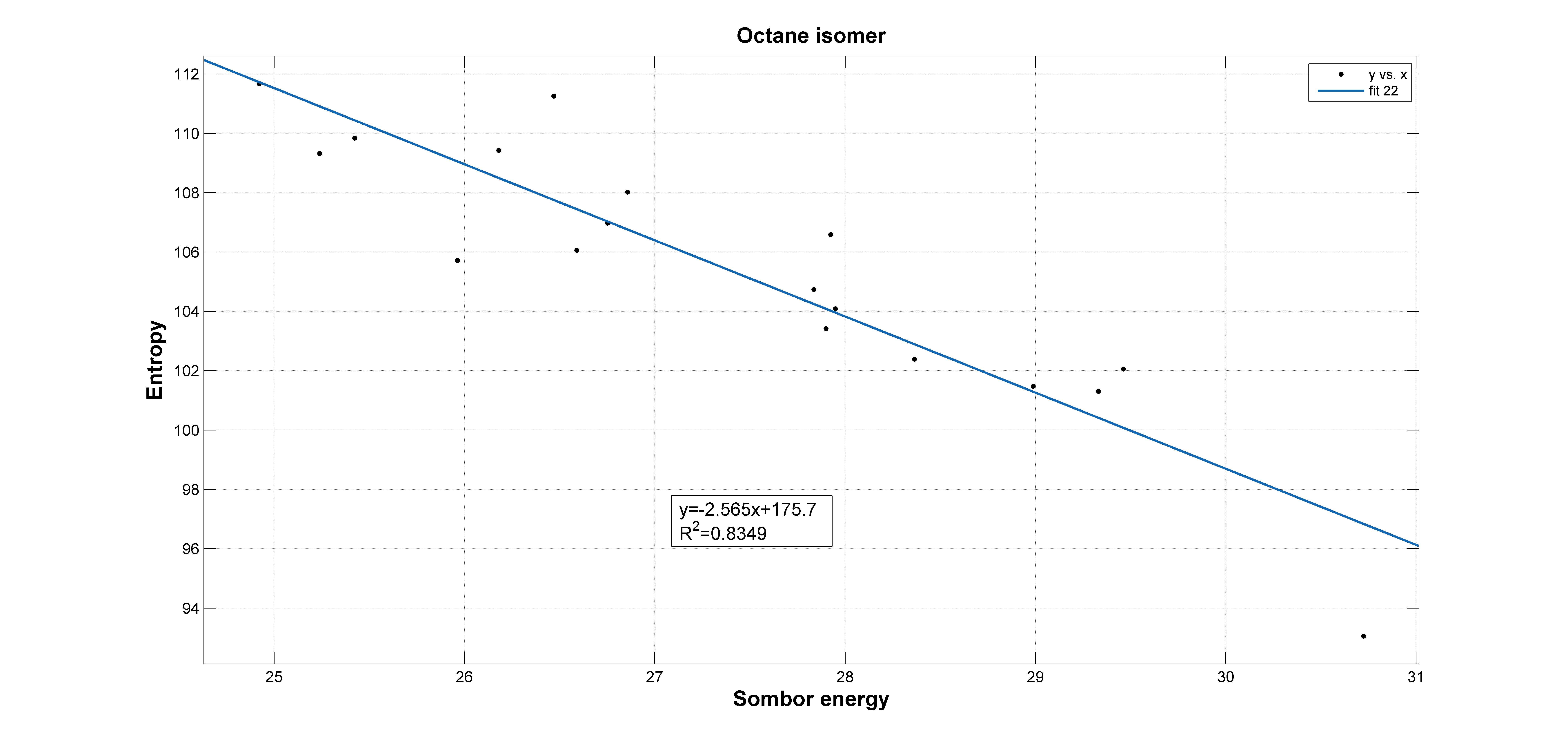}}
  \scalebox{.2}[.3]{\includegraphics{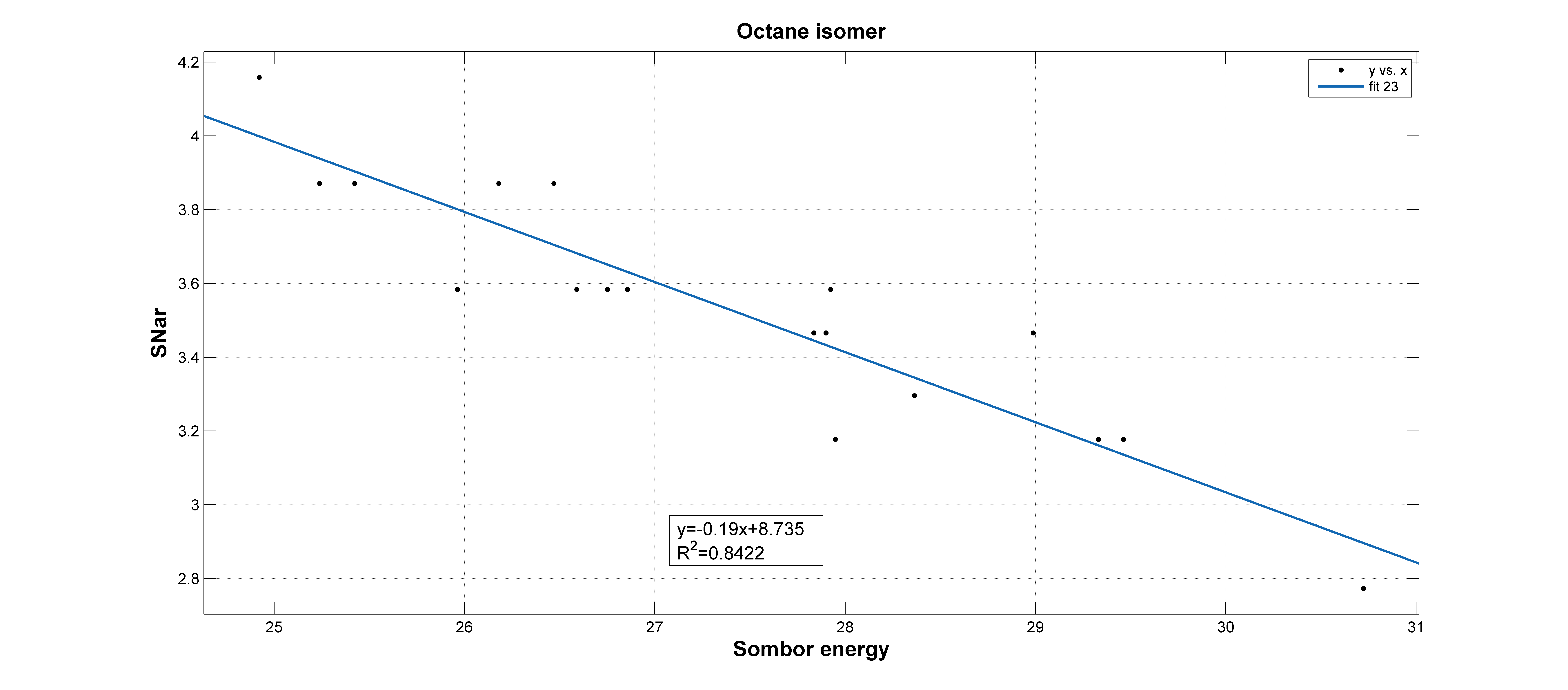}}
  \scalebox{.2}[.3]{\includegraphics{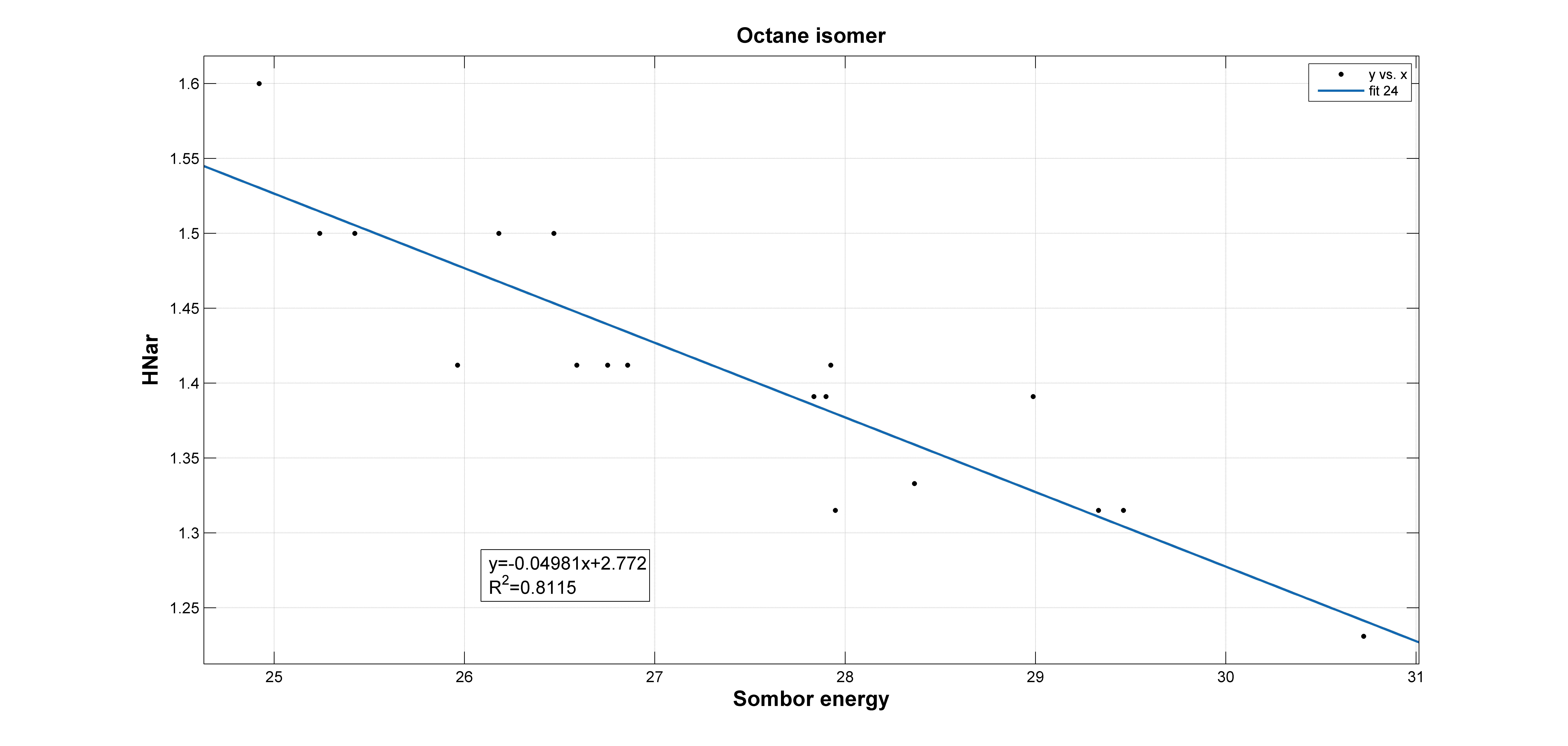}}
  \caption{Scatter plot between AcenFac (resp. Entropy, SNar, HNar) of octane isomers and $SE(G)$.}
 \label{fig-6}
\end{figure}

\section{Concluding Remarks}

\hskip 0.6cm
In this paper, we consider the properties of $p$-Sombor matrix. When $p=2$ (resp. $p=1$, $p=-1$), we can obtain most results of Sombor matrix (resp. first Zagreb matrix, ISI matrix). We determine the relationship between $p$-Sombor index $SO_{p}(G)$ and $p$-Sombor matrix $\mathcal{S}_{p}(G)$ by the $k$-th spectral moment $N_{k}$ and spectral radius of $\mathcal{S}_{p}(G)$. Then we obtain some bounds of $p$-Sombor Laplacian eigenvalues, $p$-Sombor spectral radius, spectral spread, $p$-Sombor energy and $p$-Sombor Estrada index. We also study the Nordhaus-Gaddum-type results for $p$-Sombor spectral radius and energy. At last, we give the regression model for boiling point and some other invariants. We obtain the maximum trees for the $p$-Sombor spectral radius, where $p\geq 1$. The extremal unicyclic and bicyclic graphs for the $p$-Sombor spectral radius is also an interesting problem. Thus, we propose the following problems.

\begin{problem}\label{p8-1}
Determine more properties of $p$-Sombor matrix.
\end{problem}

\begin{problem}\label{p8-2}
What is the structure of the first three minimum $($resp. maximum$)$ trees, unicyclic and bicyclic graphs for the Sombor spectral radius?
\end{problem}

We intend to do exactly the above challenging problems in the near future.

\baselineskip=0.25in

\end{document}